\numberwithin{equation}{section}
  \theoremstyle{plain}
  \newtheorem{thm}{\protect\theoremname}[section]
  \theoremstyle{definition}
  \newtheorem{example}{\protect\examplename}[section]
  \theoremstyle{plain}
  \newtheorem{cor}{\protect\corollaryname}[section]
  \theoremstyle{definition}
  \newtheorem*{example*}{\protect\examplename}
  \theoremstyle{plain}
  \newtheorem{lem}{\protect\lemmaname}[section]
  \theoremstyle{remark}
  \newtheorem{rem}{\protect\remarkname}[section]
  \newcounter{casectr}
  \newenvironment{caseenv}
  {\begin{list}{{\itshape\ \protect\casename} \arabic{casectr}.}{%
   \setlength{\leftmargin}{\labelwidth}
   \addtolength{\leftmargin}{\parskip}
   \setlength{\itemindent}{\listparindent}
   \setlength{\itemsep}{\medskipamount}
   \setlength{\topsep}{\itemsep}}
   \setcounter{casectr}{0}
   \usecounter{casectr}}
  {\end{list}}
  \theoremstyle{definition}
  \newtheorem{defn}{\protect\definitionname}[section]
  \providecommand{\casename}{Case}
  \providecommand{\definitionname}{Definition}
  \providecommand{\examplename}{Example}
  \providecommand{\lemmaname}{Lemma}
  \providecommand{\remarkname}{Remark}
\providecommand{\corollaryname}{Corollary}
\providecommand{\theoremname}{Theorem}
\begin{document}

\title{Spherical Functions on 2-adic Ramified Hermitian Spaces}

\date{Dror Ozeri}

\maketitle
\begin{center}
Ozeri@tx.technion.ac.il
\par\end{center}

\begin{center}
Department of Mathematics, Technion - Israel Institute of Technology,
Haifa 32000
\par\end{center}

~
\begin{abstract}
Y. Hironaka introduced the spherical functions on the p-adic space
of Hermitian matrices. For the space of $2\times2$ Hermitian matrices,
we complete Hironaka's work by also considering the case of a wildly
ramified quadratic extension. We compute the spherical functions explicitly
and obtain the functional equations.
\end{abstract}
\thispagestyle{empty}

~

~

~

~

\tableofcontents{}

~

~

~

~

~

~

\setcounter{page}{1}

~

~

~

~

\section{Notation}

~
\begin{itemize}
\item $E$ - Local p-adic field.
\item $E^{*}=E-\{0\}.$
\item $O_{E}$ The ring of integers of $E$.
\item $O_{E}^{*}$ - The units of $O_{E}.$
\item $\pi$ - A uniformizer of $E.$
\item $\overline{E}$ - The residue field of the field $E.$
\item $|\,|_{E}$ - The p-adic absolute value on $E$ normalized such that
$|\pi|_{E}=|\overline{E}|^{-1}.$
\item $v_{E}$ - The valuation map corresponding to $|\cdot|_{E}.$
\item $F$ - A quadratic extension of $E.$ 
\item $N=N_{F/E},Tr=Tr_{F/E}$ - The Norm and Trace maps.
\item $\varpi$ - A uniformizer of $F.$
\item $q=\#\overline{F}.$
\item $|\,|=|\,|_{F}$ - The p-adic absolute value of $F$ normalized by
$|\varpi|=q^{-1}.$
\item $v_{F}$ - The valuation map corresponding to $|\cdot|_{F}.$
\item $x\mapsto\overline{x}$ - The conjugation map of the field extension.
\item $s=v_{F}[\overline{(\varpi)}\cdot\varpi^{-1}-1]$ - The number is
defined only if $F/E$ is ramified.
\item $l=[\frac{s}{2}]$ - The largest integer $k$ such that $k\leq\frac{s}{2}$.
\item $\Delta\in1+\pi^{s}O_{E}\,,\rho\in O_{E}^{*}$ - Fixed elements such
that $\Delta=1+\rho\pi^{s}\notin N(F).$ 
\item $\chi^{*}$ - The non trivial character of $E^{*}/N(F^{*}).$ 
\item $1_{A}$ - The characteristic function of a set $A$.
\item $G$=$GL_{2}(F).$ 
\item $K=GL_{2}(O_{F}).$
\item For a group $A$ and a normal subgroup $B$ we denote by $[x]_{A/B}$
to be the coset $xB$ of an element $x\in A$.
\end{itemize}

\section{Introduction\label{sec:Introduction}}

~

Let $E$ be a p-adic local field and $F$ a quadratic extension of
$E$. Let $O_{F}$ be the ring of integers of $F$, $G=GL_{2}(F)$,
and $K=GL_{2}(O_{F})$. Set $A^{*}=\overline{A}^{t}$ . Let $X=\{A\in GL_{2}(F)|\,\, A^{*}=A\}$.
We note that the group $G$ acts on $X$ by $g\cdot x=gxg^{*}$.

~

We denote by $C^{\infty}(K\backslash X)$ the space of complex-valued
$K$-invariant functions on $X$ and $S(K\backslash X)\subseteq C^{\infty}(K\backslash X)$
the subspace of all compactly supported $K$ invariant functions on
$X$. Let $\mathcal{H}(G,K)$ be the Hecke algebra of $G$ with respect
to $K$ : the space of all compactly supported $K$-bi-invariant complex
valued functions on $G$.

~

Let $dg$ be the Haar measure on $G$ normalized by $\underset{G}{\int}1_{K}dg=1$
. The algebra $\mathcal{H}(G,K)$ acts on $C^{\infty}(K\backslash X)$
and on $S(K\backslash X)$ by the convolution product: 
\[
f\cdot\phi(x)=\underset{G}{\int}f(g)\phi(g^{-1}\cdot x)dg.
\]

Under this action we call $\phi\in C^{\infty}(K\backslash X)$ a \textit{spherical
function} on $X$ if $\phi$ is a common $\mathcal{H}(G,K)$-eigenfunction.

~

Recall that in a quadratic extension we have the following maps: $N:F^{*}\to E^{*},Tr:F\to E$
, where:
\[
N(x)=x\overline{x},\,\, Tr(x)=x+\overline{x}.
\]

~

~

Let $(s_{1},s_{2})\in\mathbb{C}^{2}$ , $\chi_{1},\chi_{2}$ characters
of $E^{*}/N(F^{*})$ , $x\in X$ . Hironaka introduced in H{[}1-4{]}
the following function :
\[
L(x,\chi_{1},\chi_{2},s_{1},s_{2})=\underset{K'}{\int}\underset{i=1}{\overset{2}{\prod}}\chi_{i}(d_{i}(k\cdot x))|d_{i}(k\cdot x)|^{s_{i}}dk,
\]

where $dk$ is the Haar measure on $K$ normalized such that $\underset{K}{\int}dk=1$
,

$d_{1}(y)=y_{1,1},\,\, d_{2}(y)=det(y)$ and $K'=\{k\in K|\underset{i=1}{\overset{2}{\prod}|}d_{i}(k\cdot x)|\neq0\}$.

~

~

It is known that this integral converges absolutely for $Re(s_{1}),Re(s_{2})>0$
and admits a meromorphic continuation to a rational function in $q^{s_{1}},q^{s_{2}}$.

~

~

~

We transform the variables $s=(s_{1},s_{s})\in\mathbb{C}^{2}$ to
new variables $z=(z_{1},z_{2})$ by the following equations:
\[
\begin{array}{c}
s_{1}=z_{2}-z_{1}-\frac{1}{2}\\
s_{2}=-z_{2}+\frac{1}{4}.
\end{array}
\]

~

It is known that this function is indeed a spherical function \cite{key-1}
and for any $f\in\mathcal{H}(G,K)$ we have

\[
[f\cdot L(*,\chi_{1},\chi_{2},z)](x)=\tilde{f}(z)\times L(x,\chi_{1},\chi_{2},z),
\]

~

where $\tilde{f}(z)$ is defined to be the Satake transform:
\[
\tilde{f}(z)=\underset{G}{\int}f(g)\Phi_{2z}(g)dg
\]

~

and $\Phi_{2z}(g)=|a_{1}|^{2z_{1}-\frac{1}{2}}|a_{2}|^{2z_{2}+\frac{1}{2}}$
where $a_{1},a_{2}$ are determined by the Iwasawa decomposition:
\[
g=k\left(\begin{array}{cc}
a_{1}\\
 & a_{2}
\end{array}\right)\left(\begin{array}{cc}
1 & *\\
 & 1
\end{array}\right),\,\,\, k\in K.
\]

By abuse of notation we denote $L(x,\chi_{1},\chi_{2},z)=L(x,\chi_{1},\chi_{2},s(z))$.

~

If the field extension is ramified, it is known that $s=v_{F}[\varpi^{-1}\overline{(\varpi)}-1]$
is an invariant of the field extension $F/E$ (see \cite{key-9} Ch3
p. 70 ) .

~

Hironaka computed $L(x,\chi_{1},\chi_{2},z)$ for the following cases
\cite{key-5,key-2}: 

~

The case where the field extension $F/E$ is unramified, the case
where the fields extension $F/E$ is ramified and $|2|=1$ (Tamely
Ramified), and the case where the field extension $F/E$ is ramified
, $|2|_{E}<1$ and $s=1,2$ (Wildly Ramified). Hironaka also proved
that the spherical functions satisfy a functional equation \cite{key-3}
and used it to compute the spherical functions for general $G=GL_{n}$
in Case 1.

~

In this paper we complete Hironaka\textquoteright{}s calculation of
Case 3 for the case of general uniformizer $\pi$ of $E$ and general
$s$. We compute the spherical functions defined above and provide
the general functional equations. Our computation is by brute force
and applies properties of the Norm and Trace maps in a quadratic extension
of local fields.

~

Chapter 3 of this work will be dedicated to the summary of the facts
we need from the local field theory. In Chapter 4 we compute useful
(and interesting) p-adic integrals that we use in Chapters 7 and 8
. In Chapter 5 we explicate convenient representatives of $K\backslash X$
, following Jacobowitz \cite{key-10} . In Chapter 6 we state the
main theorems of this work regarding the computation of the spherical
functions and the functional equation. In Chapters 7 and 8 we prove
the main theorem of Chapter 6. 

~

It is our hope that the functional equations can be used in the future
to calculate the spherical functions for general $GL_{n}(F)$ case
as was shown in \cite{key-4,key-7} with the Casselman-Shalika basis
method \cite{key-6}.

~

Spherical functions on the Hermitian spaces $X$ are related to the
concept of local densities \cite{key-4,key-5,key-8} and calculation
of the spherical functions for $GL_{n}$ can be used to calculate
the local densities , which are important in several aspects.

~

Integrals of the form $\underset{O_{F}}{\int}|a+Tr(bx)+cN(x)|_{F}^{z}dx$
appear naturally in many places. It is hoped that their computation
will have further applications

~

I would like to thank my advisors Dr. Omer Offen and Ass.~Prof.~Moshe
Baruch from the Technion for introducing me to this subject and for
their endless help and support for the last 2 years.

~

~

~

\section{\label{sec:Local-Fields-Properties:}Local Fields Properties:}

~

Let $E$ be a p-adic local field, and $F/E$ a quadratic extension.
We normalize the absolute value on $F$ by $|\varpi|=[\#\bar{F}]^{-1}$
.We denote by $v_{F}$, $v_{E}$ the corresponding valuation maps.

~

We recall without proofs facts and properties from local class field
theory, proofs can be found in (\cite{key-9}, Chapter 3). 

~

We distinguish between the following cases of extension:

~

\textbf{\emph{\large Case 1}}: \textbf{\uline{\large ~~~~The
unramified case}}{\large \par}

~

$F/E$ is an unramified extension. We denote $|\overline{F}|=q^{2},\,\,\,|\bar{E}|=q.$
For convenience, we take $\pi=\varpi.$

~

\textbf{\emph{\large Case 2}}: \textbf{\large ~~~}\textbf{\uline{\large The
tamely ramified case}}{\large \par}

~

$F/E$ is totally ramified and $Char(\bar{E})\neq2.$ We denote $q=|\bar{E}|=|\overline{F}|$.
For convenience, we take $\pi=N(\varpi).$

~

\textbf{\emph{\large Case 3}}:~~~\textbf{\uline{\large ~The
wildly ramified case}}{\large \par}

~

$F/E$ is totally ramified and $Char(\bar{E})=2.$ We denote $q=|\bar{E}|=|\overline{F}|$.
For convenience, we take $\pi=N(\varpi).$

~

~
\begin{thm}
\label{theorem of hasse}Let $F/E$ be a wildly ramified extension,
then :

~

1. The number $s=v_{F}[\varpi^{-1}\overline{\varpi})-1]$ does not
depend on the choice of the uniformizer $\varpi.$ 

2. $0<s\leq2v_{E}(2).$

3. If $s$ is even then $s=2v_{E}(2).$

~
\end{thm}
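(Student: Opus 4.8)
The plan is to reduce all three assertions to the single quantity $D:=v_{F}(\overline{\varpi}-\varpi)$, which equals $s+1$ because $\overline{\varpi}-\varpi=\varpi(\varpi^{-1}\overline{\varpi}-1)$ and $v_{F}(\varpi)=1$. First I would record the structural facts about a totally ramified quadratic extension: $O_{F}=O_{E}[\varpi]$, the minimal polynomial of $\varpi$ over $E$ is Eisenstein (so $Tr(\varpi)\in\pi O_{E}$ and $N(\varpi)\in\pi O_{E}\setminus\pi^{2}O_{E}$), and $v_{F}$ restricts to $2v_{E}$ on $E$. The engine of the whole argument is the identity
\[
\overline{\varpi}-\varpi=Tr(\varpi)-2\varpi,
\]
together with the parity observation that $v_{F}(Tr(\varpi))=2v_{E}(Tr(\varpi))$ is even (or $+\infty$), while $v_{F}(2\varpi)=2v_{E}(2)+1$ is odd.

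For Part 1, I would write a second uniformizer as $\varpi'=w\varpi$ with $w\in O_{F}^{*}$ and compute $\overline{\varpi'}/\varpi'=(\overline{w}/w)(\overline{\varpi}/\varpi)$. Using $O_{F}=O_{E}[\varpi]$, write $w=a+b\varpi$ with $a\in O_{E}^{*}$, $b\in O_{E}$, so that $\overline{w}-w=b(\overline{\varpi}-\varpi)$ and hence $v_{F}(\overline{w}/w-1)=v_{F}(b)+D\geq D>s$. Expanding
\[
\frac{\overline{\varpi'}}{\varpi'}-1=\frac{\overline{\varpi}}{\varpi}\Bigl(\frac{\overline{w}}{w}-1\Bigr)+\Bigl(\frac{\overline{\varpi}}{\varpi}-1\Bigr)
\]
and invoking the ultrametric inequality, the first term has valuation strictly greater than $s$ while the second has valuation exactly $s$, so $v_{F}(\overline{\varpi'}/\varpi'-1)=s$ and the value is unchanged by the choice of uniformizer.

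For Parts 2 and 3, the opposite-parity remark forces $v_{F}(Tr(\varpi))\neq v_{F}(2\varpi)$, so there is no cancellation in the key identity and
\[
D=\min\bigl(2v_{E}(Tr(\varpi)),\,2v_{E}(2)+1\bigr).
\]
Since $Tr(\varpi)\in\pi O_{E}$ gives $v_{F}(Tr(\varpi))\geq2$ and $2v_{E}(2)+1\geq3$ in the wild case, we obtain $D\geq2$, i.e.\ $s\geq1>0$; and $D\leq2v_{E}(2)+1$ gives $s\leq2v_{E}(2)$. Finally, if $s$ is even then $D$ is odd, so the minimum cannot be the even term $v_{F}(Tr(\varpi))$ and must be the odd term, forcing $D=2v_{E}(2)+1$ and $s=2v_{E}(2)$. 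The main obstacle, and really the one idea everything rests on, is justifying that $v_{F}(\overline{\varpi}-\varpi)$ equals the minimum of the valuations of its two natural summands with no cancellation; this is precisely where the parity dichotomy between $Tr(\varpi)$ and $2\varpi$ is indispensable.
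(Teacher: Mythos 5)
Your proposal is correct, but it cannot be ``the same as the paper's proof'' for a simple reason: the paper does not prove Theorem \ref{theorem of hasse} at all --- it defers entirely to Fesenko--Vostokov \cite{key-9}, p.~75. Measured against that, your argument is a genuine contribution: a self-contained, purely elementary proof. The key points all check out. The reduction to $D=v_{F}(\overline{\varpi}-\varpi)=s+1$, the identity $\overline{\varpi}-\varpi=Tr(\varpi)-2\varpi$, and the parity dichotomy ($v_{F}(Tr(\varpi))=2v_{E}(Tr(\varpi))$ even or $+\infty$, versus $v_{F}(2\varpi)=2v_{E}(2)+1$ odd) do force $D=\min\bigl(2v_{E}(Tr(\varpi)),\,2v_{E}(2)+1\bigr)$ with no cancellation, and Parts 2 and 3 fall out exactly as you say, using $v_{E}(Tr(\varpi))\geq1$ from the Eisenstein property and $v_{E}(2)\geq1$ from wildness. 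Part 1 is also sound: writing $w=a+b\varpi$ via $O_{F}=O_{E}[\varpi]$ to get $\overline{w}-w=b(\overline{\varpi}-\varpi)$, hence $v_{F}(\overline{w}/w-1)\geq D>s$, and then the ultrametric inequality applied to your decomposition of $\overline{\varpi'}/\varpi'-1$ pins its valuation at exactly $s$. In effect you have reconstructed the classical computation of the ramification number (equivalently, of the different $\mathfrak{d}_{F/E}=(f'(\varpi))=(2\varpi-Tr(\varpi))$) specialized to quadratic extensions, which is essentially what the cited reference does in greater generality; what your version buys is that a reader of this paper would no longer need to consult \cite{key-9} for this foundational fact. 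The only things worth stating explicitly if this were inserted into the text are the two standard facts you invoke without proof: that $O_{F}=O_{E}[\varpi]$ with Eisenstein minimal polynomial for a totally ramified extension, and that a unit $w=a+b\varpi$ forces $a\in O_{E}^{*}$ (which itself follows from your parity observation applied to $v_{F}(a)$ and $v_{F}(b\varpi)$).
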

Proof of Theorem \ref{theorem of hasse} could be found in \cite{key-9}
,p. 75.

~

Theorem \ref{theorem of hasse} motivates us to distinguish between
two types of extensions:

~

~

~

\textbf{\uline{\large Ramified Prime (RP) }}{\large \par}

~

The extension $F/E$ is wildly ramified extension and the invariant
$s$ is even. It can be shown \cite{key-9} that if $F/E$ is RP then
$F=E(\sqrt{\pi'})$ , where $\pi'$ is some uniformizer of $E$ . 

~

~

~

~

~

~

\textbf{\uline{\large Ramified Unit (RU) }}{\large \par}

~

The extension $F/E$ is wildly ramified extension and the invariant
$s$ is odd. Again, it can be shown that if $F/E$ is RU then $F=E(\sqrt{1+\delta\pi^{2k+1}})$
for some $\delta\in O_{E}^{*}$ . One can take a uniformizer on $F$
to be $\varpi'=\frac{1+\sqrt{1+\delta\pi^{2k+1}}}{\pi^{k}}$ ( note
that $|\varpi'\overline{\varpi'}|_{E}=q^{-1}$). A quick calculation
shows that $s=2v_{E}(2)-(2k+1)>0.$

~
\begin{example}
\label{example RP}Consider the extension $F/E$ where $F=Q_{2}(\sqrt{2})$
, $E=Q_{2}$.

Since $|2|_{F}=|\sqrt{2}\cdot\sqrt{2}|_{F}=2^{-1}\Longrightarrow|\sqrt{2}|_{F}=\sqrt{2}^{-1}$
, but $2$ is known to be the uniformizer of $Q_{2}$, hence this
extension is ramified. $(|\varpi|_{F}>|\pi|_{F})$. We take $\varpi=\sqrt{2}$.
Note that $s=v_{F}(\frac{\sigma(\sqrt{2})}{\sqrt{2}}-1)=v_{F}(\frac{-\sqrt{2}}{\sqrt{2}}-1)=v_{F}(-1-1)=v_{F}(-2)=2$
. Therefore this extension is a RP extension.
\end{example}
~
\begin{example}
\label{example RU}Consider the extension $F/E$ where $F=Q_{2}(\sqrt{-5})$
, $E=Q_{2}$.
\end{example}
By one of the definitions of the absolute value $|\cdot|_{F}:$ 
\[
|a+\sqrt{-5}b|_{F}=\sqrt{|a^{2}+5b^{2}|_{E}},\,\,\,\, a,b\in E.
\]
 Therefore: $|1+\sqrt{-5}|_{F}=\sqrt{|6|_{E}}=\sqrt{2}^{-1}$. So
$F/E$ is ramified extension. 

One can take $\varpi=1+\sqrt{-5}.$ We calculate: 
\[
s=v_{F}(\frac{1-\sqrt{-5}}{1+\sqrt{-5}}-1)=v_{F}(\frac{-2\sqrt{-5}}{1+\sqrt{-5}})=v_{F}(\frac{-2}{1+\sqrt{-5}})=v_{F}(2)-v_{F}(1+\sqrt{-5})=2-1=1.
\]
 We conclude that $F/E$ is a ramified unit extension. Note that $F=E(\sqrt{1+(-3)\cdot2^{1}}$.

\subsection{The Norm\label{sub:The-Norm}}

~

For $i>0$ ,we denote by $\lambda_{i,F}$ (resp. $\lambda_{i,E}$)
the natural map $\lambda_{i,F}:\,(1+\pi^{i}O_{F})\rightarrow\overline{F}$

\[
\lambda_{i,F}(x)=\varpi^{-i}(x-1)\,\,\, mod\,\,\varpi O_{F}
\]

and 

\[
\lambda_{i,E}(x)=\pi^{-i}(x-1)\,\,\, mod\,\,\pi O_{F}.
\]

~

Denote the residue map by $\lambda_{0,K}:O_{K}\rightarrow\overline{K}$,
$(K=E,F)$ .

~

~

~

We summarize the properties of the norm by the following diagrams:

~

\textbf{\uline{Case 1-Unramified :}}

~

The following diagrams commute:

~

\[
\xymatrix{F^{*}\ar[d]_{N_{F/E}}\ar[r]^{v_{F}} & \mathbb{Z}\ar[d]^{\times2}\\
E^{*}\ar[r]^{v_{E}} & \mathbb{Z}
}
\]
\begin{equation}
\xymatrix{O_{F}\ar[d]_{N_{F/E}}\ar[r]^{\lambda_{0,F}} & \bar{F}\ar[d]^{N_{\overline{F}/\overline{E}}}\\
O_{E}\ar[r]^{\lambda_{0,E}} & \bar{E}
}
\label{eq:3.1-1}
\end{equation}
~~~~~~
\[
\,\,\,\,\,\,\,\,\,\,\,\,\xymatrix{1+\varpi^{i}O_{F}\ar[d]_{N_{F/E}}\ar[r]^{\lambda_{i,F}} & \bar{F}\ar[d]^{Tr_{\overline{F}/\overline{E}}}\\
1+\pi^{i}O_{E}\ar[r]^{\lambda_{i,E}} & \bar{E}
}
,\, i\geq1
\]

~

\textbf{\uline{Case 2- Tamely ramified}}

~

\[
\xymatrix{E^{*}=F^{*}\ar[d]_{N_{F/E}}\ar[r]^{v_{F}} & \mathbb{Z}\ar[d]^{id}\\
E^{*}\ar[r]^{v_{E}} & \mathbb{Z}
}
\]
\[
\xymatrix{O_{F}\ar[d]_{N_{F/E}}\ar[r]^{\lambda_{0,F}} & \bar{E}=\bar{F}\ar[d]^{x\mapsto x^{2}}\\
O_{E}\ar[r]^{\lambda_{0,E}} & \bar{E}
}
\]
\[
\,\,\,\,\,\xymatrix{1+\varpi^{2i}O_{F}\ar[d]_{N_{F/E}}\ar[r]^{\lambda_{2i,F}} & \bar{E}=\overline{F}\ar[d]^{\times2}\\
1+\pi^{i}O_{E}\ar[r]^{\lambda_{i,E}} & \bar{E}
}
\]

~

And $N(1+\varpi^{2i-1}O_{F})=N(1+\varpi^{2i}O_{F})$.

~

~

~

~

\textbf{\uline{Case 3- wildly ramified}}

~

Let $\eta\in O_{F}^{*}$ to be such that $\frac{\overline{\varpi}}{\varpi}=1+\eta\varpi^{s}$
. 

Denote $\kappa=\lambda_{0,F}(\eta)$.

~

~

The following diagrams commute:

\[
\xymatrix{F^{*}\ar[d]_{N_{F/E}}\ar[r]^{v_{F}} & \mathbb{Z}\ar[d]^{id}\\
E^{*}\ar[r]^{v_{E}} & \mathbb{Z}
}
\]

\begin{equation}
\xymatrix{O_{F}\ar[d]_{N_{F/E}}\ar[r]^{\lambda_{0,F}} & \bar{E}=\bar{F}\ar[d]^{x\mapsto x^{2}}\\
O_{E}\ar[r]^{\lambda_{0,E}} & \bar{E}
}
\label{eq:3.1}
\end{equation}

~

~

For $i<s$:

\begin{equation}
\xymatrix{1+\varpi^{i}O_{F}\ar[d]_{N_{F/E}}\ar[r]^{\lambda_{i,F}} & \bar{E}=\overline{F}\ar[d]^{x\mapsto x^{2}}\\
1+\pi^{i}O_{E}\ar[r]^{\lambda_{i,E}} & \bar{E}
}
\label{eq:3.2}
\end{equation}

~

~

~~~
\begin{equation}
\xymatrix{1+\varpi^{s}O_{F}\ar[d]_{N_{F/E}}\ar[r]^{\lambda_{s,F}} & \bar{E}=\overline{F}\ar[d]^{x\mapsto x^{2}-\kappa\cdot x}\\
1+\pi^{s}O_{E}\ar[r]^{\lambda_{s,E}} & \bar{E}
}
\label{eq:3.3}
\end{equation}

~

(Note that the homomorphism $x\mapsto x^{2}-\kappa\cdot x$ is an
additive homomorphism with kernel of size 2)

~

~

~

~

~

For $j>0$

\begin{equation}
\xymatrix{1+\varpi^{s+2j}O_{F}\ar[d]_{N_{F/E}}\ar[r]^{\lambda_{s+2j,F}} & \bar{E}=\overline{F}\ar[d]^{\times\kappa}\\
1+\pi^{s+j}O_{E}\ar[r]^{\lambda_{i,E}} & \bar{E}
}
\,\label{eq:3.4}
\end{equation}

~

and $N(1+\varpi^{s+i}O_{F})=N(1+\varpi^{s+i+1}O_{F})$~~~if $i>0$
and $2\nmid i.$

~

Proofs for commutativity could be found in \cite{key-9} p. 68-73.

~

We will use throughout this paper the following corollaries ( Case
3):
\begin{cor}
\label{corollary norm 1}$N(1+\varpi^{s+1}O_{F})=1+\pi^{s+1}O_{E}$.\end{cor}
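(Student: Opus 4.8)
The plan is to establish the two inclusions $N(1+\varpi^{s+1}O_F)\subseteq 1+\pi^{s+1}O_E$ and $1+\pi^{s+1}O_E\subseteq N(1+\varpi^{s+1}O_F)$ separately, using the commutative diagrams \eqref{eq:3.3} and \eqref{eq:3.4} that record the behavior of the norm map on the unit filtration. First I would observe that $s+1$ is odd (since $s$ is even in the RP case would give $s+1$ odd, and in the RU case $s$ is odd so $s+1$ is even---so I must be careful about parity, and I will actually split on the parity of $s$, or rather work uniformly through the diagram at level $s$). The cleanest route is to note that $1+\varpi^{s+1}O_F\subseteq 1+\varpi^{s}O_F$, so by the diagram \eqref{eq:3.3} the norm carries $1+\varpi^{s+1}O_F$ into $1+\pi^{s}O_E$; I then need to show the image lands in the smaller group $1+\pi^{s+1}O_E$ and in fact fills it.

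For the forward inclusion, the key is the additive homomorphism $x\mapsto x^2-\kappa x$ appearing in \eqref{eq:3.3}: an element of $1+\varpi^{s+1}O_F$ reduces to $0$ under $\lambda_{s,F}$ (it lies in the kernel of the reduction to level $s$), hence its norm reduces to $0$ under $\lambda_{s,E}$, which says precisely that $N(1+\varpi^{s+1}O_F)\subseteq 1+\pi^{s+1}O_E$. For the reverse inclusion I would argue by successive approximation (Hensel-style lifting along the filtration): given any target $u\in 1+\pi^{s+1}O_E$, I use the diagrams \eqref{eq:3.4} at each level $j\geq 1$ to solve the norm equation one layer at a time. Since $\kappa\neq 0$ in $\bar E$, each map $\times\kappa\colon\bar E\to\bar E$ in \eqref{eq:3.4} is an isomorphism, so at every stage $1+\pi^{s+j}O_E$ the relevant residue-field equation is uniquely solvable; combined with the remark that $N(1+\varpi^{s+i}O_F)=N(1+\varpi^{s+i+1}O_F)$ for odd $i>0$, this lets me build a Cauchy sequence of approximate norm-preimages whose limit lies in $1+\varpi^{s+1}O_F$ by completeness of $F$.

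The main obstacle is the bottom rung of the induction, at level $s+1$ itself, where the parity of $s$ interacts with which diagram governs the passage from level $s$ to level $s+1$: the isomorphism $\times\kappa$ in \eqref{eq:3.4} is stated only for the even jumps $s+2j$, so I must confirm that the odd-level collapse $N(1+\varpi^{s+i}O_F)=N(1+\varpi^{s+i+1}O_F)$ correctly identifies $N(1+\varpi^{s+1}O_F)$ with $N(1+\varpi^{s+2}O_F)$ and that no image is lost in that identification. Once the surjectivity onto each residue layer is secured by the invertibility of $\times\kappa$, the completeness argument closes the inclusion and the two inclusions together give the stated equality.
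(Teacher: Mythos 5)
Your proposal is correct and follows essentially the same route as the paper: both arguments rest on the surjectivity of the map $\times\kappa$ in the diagrams \eqref{eq:3.4} to solve the norm equation modulo each level $1+\pi^{s+j}O_{E}$, and then conclude by a limiting argument (the paper phrases this as density of the image together with compactness of $1+\varpi^{s+1}O_{F}$, you as a Cauchy-sequence/completeness argument, which is the same mechanism). The ``obstacle'' you flag at the bottom level is a non-issue: your lifting construction already produces a preimage in $1+\varpi^{s+2}O_{F}\subseteq1+\varpi^{s+1}O_{F}$, and in any case the collapse $N(1+\varpi^{s+i}O_{F})=N(1+\varpi^{s+i+1}O_{F})$ for odd $i>0$ is stated in Section \ref{sub:The-Norm} and applies with $i=1$.
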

\begin{proof}
From \eqref{eq:3.4} we conclude that since the map $x\mapsto x\times\kappa$
is a surjective homomorphism between the additive groups $\overline{F}\to\overline{E}$
and form the commutativity of diagram \ref{eq:3.4} that for $i>s$
, every ball of radius $q^{-i}$ in $1+\pi^{s+1}O_{E}$ contains an
element that is a norm, thus $N(1+\varpi^{s+1}O_{F})$ is dense in
$1+\pi^{s+1}O_{E}$. From compactness of $1+\varpi^{s+1}O_{F}$ we
have $N(1+\varpi^{s+1}O_{F})=1+\pi^{s+1}O_{E}$ .
\end{proof}
~
\begin{cor}
$[1+\pi^{s}O_{E}\,:\, N(1+\varpi^{s}O_{F})]=2.$\end{cor}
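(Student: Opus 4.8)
We need to show that the index of $N(1+\varpi^{s}O_{F})$ in $1+\pi^{s}O_{E}$ equals $2$. The key input is diagram \eqref{eq:3.3}, which expresses the norm map on the layer $1+\varpi^{s}O_{F}$ via the residue-field map $x\mapsto x^{2}-\kappa x$ (the Artin--Schreier-type additive homomorphism whose kernel has size $2$, as noted after the diagram), together with the preceding Corollary \ref{corollary norm 1} identifying $N(1+\varpi^{s+1}O_{F})$ with $1+\pi^{s+1}O_{E}$.

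**The plan.** The strategy is to compare the two nested filtration layers and use the commuting square to read off the index on the residue field. First I would consider the inclusion chain $N(1+\varpi^{s+1}O_{F}) \subseteq N(1+\varpi^{s}O_{F}) \subseteq 1+\pi^{s}O_{E}$. By Corollary \ref{corollary norm 1}, the leftmost group is exactly $1+\pi^{s+1}O_{E}$. Hence I compute the index $[1+\pi^{s}O_{E}:N(1+\varpi^{s}O_{F})]$ by factoring through $1+\pi^{s+1}O_{E}$: since
\[
[1+\pi^{s}O_{E}:1+\pi^{s+1}O_{E}] = q,
\]
it suffices to determine the index $[N(1+\varpi^{s}O_{F}):1+\pi^{s+1}O_{E}]$ and divide. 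So the whole problem reduces to understanding the image of $N$ on the quotient $(1+\varpi^{s}O_{F})/(1+\varpi^{s+1}O_{F})$.

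**The residue-field computation.** Now I would use diagram \eqref{eq:3.3}. The map $\lambda_{s,F}$ identifies $(1+\varpi^{s}O_{F})/(1+\varpi^{s+1}O_{F})$ with the additive group $\overline{F}=\overline{E}$, and likewise $\lambda_{s,E}$ identifies $(1+\pi^{s}O_{E})/(1+\pi^{s+1}O_{E})$ with $\overline{E}$. Under these identifications, the norm induces on the bottom precisely the additive homomorphism $\phi(x)=x^{2}-\kappa x$. Therefore the image $N(1+\varpi^{s}O_{F})$ mod $1+\pi^{s+1}O_{E}$ is exactly $\mathrm{Im}(\phi)\subseteq\overline{E}$. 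Since $\phi$ is an additive homomorphism of the finite group $\overline{E}$ with kernel of size $2$, its image has index $2$ in $\overline{E}$, i.e. $|\mathrm{Im}(\phi)| = q/2$. Combining,
\[
[1+\pi^{s}O_{E}:N(1+\varpi^{s}O_{F})] = \frac{[1+\pi^{s}O_{E}:1+\pi^{s+1}O_{E}]}{[N(1+\varpi^{s}O_{F}):1+\pi^{s+1}O_{E}]} = \frac{q}{q/2} = 2.
\]

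**Anticipated obstacle.** The only delicate point is justifying that the image of the induced map on residue quotients is genuinely $\mathrm{Im}(\phi)$ with no surprises at the boundary — that is, that $N(1+\varpi^{s}O_{F})$ together with $1+\pi^{s+1}O_{E}$ fills out exactly the preimage of $\mathrm{Im}(\phi)$. This follows because $1+\pi^{s+1}O_{E}=N(1+\varpi^{s+1}O_{F})$ lies inside $N(1+\varpi^{s}O_{F})$ by functoriality of the norm on the nested subgroups, so the square \eqref{eq:3.3} is compatible with the identification from Corollary \ref{corollary norm 1} and the kernel-size-$2$ statement transfers cleanly. Once that compatibility is in hand, the index count is immediate, and the fact that $\kappa\neq 0$ (equivalently $\phi$ is not identically zero, which is what guarantees the image is a proper subgroup rather than trivial) is exactly the content of the parenthetical remark that $\phi$ has kernel of size $2$.
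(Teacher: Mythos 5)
Your proof is correct and is essentially the paper's own argument: the paper's one-line proof cites exactly the commutativity of diagram \eqref{eq:3.3} together with Corollary \ref{corollary norm 1}, which is precisely the reduction you carry out in detail (the norm on the layer $(1+\varpi^{s}O_{F})/(1+\varpi^{s+1}O_{F})$ becomes the additive map $x\mapsto x^{2}-\kappa x$ with kernel of size $2$, hence image of index $2$, and Corollary \ref{corollary norm 1} guarantees $N(1+\varpi^{s}O_{F})$ contains $1+\pi^{s+1}O_{E}$ so the index transfers to the full groups). Your explicit handling of the "boundary" compatibility is exactly the detail the paper leaves implicit.
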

\begin{proof}
Follows from commutativity of \eqref{eq:3.3} and the last corollary
\end{proof}
~
\begin{cor}
\label{corolary 3.3}Let $x,y\in\pi^{i}O_{E}^{*}$ , If $|x-y|_{E}<q^{-s-i}$
then $x\in N(F)\Longleftrightarrow y\in N(F)$ .\end{cor}
\begin{proof}
Since $\pi$ is a norm we can assume w.l.o.g that $i=0$. We have
from the last corollary and the commutativity of \eqref{eq:3.1-1}
that $N(O_{E}^{*})$ is a union of cosets of $1+\pi^{s+1}O_{E}$,
thus if $|x-y|_{E}<q^{-s}$ then they are in the same coset and we
have $x\in N(O_{F}^{*})\iff y\in N(O_{F}^{*})$.\end{proof}
\begin{example*}
Consider $F/E$ from example \ref{example RP}. Note that $\overline{F}=\overline{E}=F_{2}$
(The field with two elements) . Note that $s=2$, and $\kappa=1$
. we have:

\[
\xymatrix{1+2O_{F}\ar[d]_{N_{F/E}}\ar[r]^{\lambda_{2,F}} & F_{2}\ar[d]^{x\mapsto x^{2}-x}\\
1+4Z_{2}\ar[r]^{\lambda_{2,E}} & F_{2}
}
\]
Since the image of the map: $\psi:F_{2}\to F_{2}$ , $\psi(x)\mapsto x^{2}-x$
is $\{0\}$ we conclude from the commutativity of the diagram that:

\[
N(1+2O_{F})=1+8Z_{2}.
\]

\end{example*}

\subsection{The Trace\label{sub:The-Trace}}

~

In the wildly ramified case (Case 3) the following holds:

~

$Tr_{F/E}(\varpi^{i}O_{F})=\pi^{j(i)}O_{E}$ , $j(i)=s+1+[(i-1-s)/2]$. 

~

\selectlanguage{american}%
Proof could be found in \cite{key-9}, p. 71.

~

\selectlanguage{english}%
Explicitly,

~

\textbf{RP:}

$s=2l,$ $l=\nu_{E}(2)$

$Tr(\varpi^{2i}O_{F})=\pi^{l+i}O_{E}$

$Tr(\varpi^{2i-1}O_{F})=\pi^{l+i}O_{E}$

~

~

\textbf{RU:}

$s=2l+1$

\selectlanguage{american}%
$Tr(\varpi^{2i}O_{F})=\pi^{l+1+i}O_{E}$

$Tr(\varpi^{2i-1}O_{F})=\pi^{l+i}O_{E}$

~

\selectlanguage{english}%

\section{Useful Integrals}
\begin{lem}
\label{pushforword lemma}Let H and G be compact abelian topological
groups with Haar measures $\mu_{H},\mu_{G}$ and $\phi:H\rightarrow G$
a surjective (continuous) homomorphism, then the push-forward measure
$\mu_{H*}=\mu_{H}\circ\phi^{-1}$is an invariant Haar measure on $G$
and $\mu_{H*}=\frac{\mu_{H}(H)}{\mu_{G}(G)}\cdot\mu_{G}$.\end{lem}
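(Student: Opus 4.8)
The plan is to verify the two defining properties of the push-forward measure $\mu_{H*} = \mu_H \circ \phi^{-1}$ in turn: first that it is a Haar (translation-invariant, finite, nonzero) measure on $G$, and then that its normalizing constant relative to $\mu_G$ is exactly $\mu_H(H)/\mu_G(G)$. First I would recall that for a Borel set $A \subseteq G$ we have $\mu_{H*}(A) = \mu_H(\phi^{-1}(A))$, and that $\phi^{-1}(A)$ is Borel because $\phi$ is continuous, so $\mu_{H*}$ is well-defined on the Borel $\sigma$-algebra of $G$.

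To establish left-invariance, fix $g \in G$. Since $\phi$ is surjective, choose $h \in H$ with $\phi(h) = g$. The key identity is $\phi^{-1}(gA) = h\,\phi^{-1}(A)$, which follows from the homomorphism property: $x \in \phi^{-1}(gA)$ iff $\phi(x) = g\phi(a)$ for some $a \in A$ iff $\phi(h^{-1}x) \in A$ iff $h^{-1}x \in \phi^{-1}(A)$. Then translation-invariance of $\mu_H$ gives
\[
\mu_{H*}(gA) = \mu_H(\phi^{-1}(gA)) = \mu_H(h\,\phi^{-1}(A)) = \mu_H(\phi^{-1}(A)) = \mu_{H*}(A).
\]
Finiteness is immediate since $\mu_{H*}(G) = \mu_H(\phi^{-1}(G)) = \mu_H(H) < \infty$ by compactness of $H$, and $\mu_{H*}$ is nonzero because $\mu_H(H) > 0$. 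By uniqueness of Haar measure on the compact group $G$ up to a positive scalar, $\mu_{H*}$ must be a constant multiple of $\mu_G$, say $\mu_{H*} = c\,\mu_G$.

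To pin down the constant, I would evaluate both sides at $A = G$: the left side is $\mu_{H*}(G) = \mu_H(H)$ and the right side is $c\,\mu_G(G)$, so $c = \mu_H(H)/\mu_G(G)$, which is the claimed formula. I do not anticipate a serious obstacle here; the only point requiring care is invoking the uniqueness of Haar measure, which needs the group $G$ to be abelian (or at least unimodular) and compact Hausdorff so that the Haar measure exists and is unique up to scaling — both hypotheses are supplied by the statement. A minor technical check worth noting is that measurability of $\phi^{-1}(A)$ and the regularity needed for the translation identity to transfer to measures both rest on $\phi$ being a continuous homomorphism of topological groups, so the pushed-forward set operations respect the group structure. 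With these in place the argument is essentially a two-line verification plus the normalization.
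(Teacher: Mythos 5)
Your proof is correct, but it takes a genuinely different route from the paper's. You establish invariance of $\mu_{H*}$ directly: surjectivity lets you lift $g\in G$ to some $h\in H$, the homomorphism property gives $\phi^{-1}(gA)=h\,\phi^{-1}(A)$, and invariance of $\mu_{H}$ does the rest; then uniqueness of Haar measure on the compact group $G$ yields $\mu_{H*}=c\,\mu_{G}$, and evaluation at $A=G$ gives $c=\mu_{H}(H)/\mu_{G}(G)$. The paper argues instead through harmonic analysis on compact abelian groups: for every non-trivial character $\chi$ of $G$, the pull-back $\chi\circ\phi$ is a non-trivial character of $H$ (this is where the paper uses surjectivity), so $\int_{G}\chi\, d\mu_{H*}=\int_{H}\chi(\phi(x))\, d\mu_{H}(x)=0$; since the span of the characters is dense (Stone--Weierstrass/Peter--Weyl), a finite measure killing all non-trivial characters must be proportional to $\mu_{G}$, and integrating the trivial character fixes the constant. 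What each approach buys: yours never uses the abelian hypothesis --- it works verbatim for arbitrary compact (hence unimodular) groups --- and is elementary modulo the uniqueness theorem for Haar measure, though strictly you should note that the pushforward is a Radon measure so that uniqueness applies (automatic in the paper's second-countable p-adic setting); the paper's character argument trades uniqueness of Haar measure for Pontryagin duality, which is natural here since the lemma is only ever applied to abelian groups. One cosmetic slip in your write-up: in the displayed chain of equivalences, ``$\phi(x)=g\phi(a)$ for some $a\in A$'' is meaningless since $A\subseteq G$; the step should read $\phi(x)\in gA\iff g^{-1}\phi(x)\in A\iff\phi(h^{-1}x)\in A$, which is clearly what you intended, and the rest of the argument is unaffected.
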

\begin{proof}
Because the characters span a dense subset in $L^{1}(G,\mu_{G})$
it is sufficient to show that $\underset{G}{\int}\chi d\mu_{H*}=0$
for any non-trivial character $\chi$ . But $\underset{G}{\int}\chi d\mu_{H*}=\underset{H}{\int}\chi(\phi(x))d\mu_{H}(x)=0,$
since $\chi(\phi(x))$ is an non trivial character of $H$ . We obtain
the multiplicative factor between the measures from integrating over
the trivial character.\end{proof}
\begin{rem}
The conditions of the previous lemma can be further generalized. 
\end{rem}
~

~

Take $\mu_{E},\mu_{F}$ to be the Haar measures on $E,F$ normalized
by 
\[
\mu_{E}(O_{E})=\mu_{F}(O_{F})=1.
\]

~
\begin{lem}
\label{measure lemma}For $n\geq1$ 
\[
\mu_{F}[N^{-1}(1+\pi^{n}O_{E}^{*})]=\mu_{F}(N^{-1}[(1+\pi^{n}O_{E})-(1+\pi^{n+1}O_{E})])=
\]
 \end{lem}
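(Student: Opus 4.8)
The plan is to compute the measure $\mu_F[N^{-1}(1+\pi^n O_E^*)]$ by relating the pushforward of $\mu_F$ under the norm map $N\colon F^*\to E^*$ to $\mu_E$, and then integrating the characteristic function of the relevant coset set. Since the statement concerns the wildly ramified case (Case 3), the key tool is the collection of commutative diagrams \eqref{eq:3.1}--\eqref{eq:3.4} together with Corollary~\ref{corollary norm 1}, which pin down exactly which elements of $1+\pi^i O_E$ lie in the image of the norm and with what multiplicity. The set $1+\pi^n O_E^*$ is precisely $(1+\pi^n O_E)-(1+\pi^{n+1}O_E)$, which explains the stated equality of the two measures: I would first note this set-theoretic identity, so that the problem reduces to computing $\mu_F$ of the preimage of a single annulus.

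First I would restrict attention to the compact group $U=1+\varpi O_F$ (or an appropriate finite-index neighborhood of $1$) and apply Lemma~\ref{pushforword lemma} to the restriction of $N$ to suitable filtration pieces $1+\varpi^i O_F$. Lemma~\ref{pushforword lemma} tells me that on each piece where $N$ is a \emph{surjective} homomorphism onto its target, the pushforward measure is $\mu_F$-mass divided by $\mu_E$-mass times $\mu_E$; this converts the fiber-measure computation into a counting of cosets. The essential input is that the norm, read through the diagrams, is either surjective onto $1+\pi^i O_E$ (for $i<s$ and for $i=s+1$, per Corollary~\ref{corollary norm 1}) or $2$-to-$1$ onto its image (at level $s$, via diagram \eqref{eq:3.3}, whose vertical map $x\mapsto x^2-\kappa x$ has kernel of size $2$), or has image stabilizing in odd steps above $s$ (the stated $N(1+\varpi^{s+i}O_F)=N(1+\varpi^{s+i+1}O_F)$ for odd $i$). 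Using these, I would write $\mu_F[N^{-1}(1+\pi^n O_E^*)]$ as a difference $\mu_F[N^{-1}(1+\pi^n O_E)]-\mu_F[N^{-1}(1+\pi^{n+1}O_E)]$ and evaluate each term via the index $[1+\pi^n O_E : N(1+\varpi^? O_F)]$ dictated by the appropriate diagram.

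The main obstacle will be the case analysis in $n$ relative to the ramification invariant $s$: the behavior of the norm map changes qualitatively at the three thresholds $n<s$, $n=s$, and $n>s$, and within $n>s$ it further depends on the parity of $n-s$ (the diagrams \eqref{eq:3.4} and the stabilization statement treat even and odd steps differently). The delicate point is tracking the exact Jacobian-type scaling factor $\mu_F(1+\varpi^i O_F)/\mu_E(1+\pi^j O_E)$ across the ramified extension, remembering that $v_E\circ N = v_F$ in Case 3 so that $\varpi^i$ and $\pi^i$ sit at matching valuations but $\mu_F(1+\varpi^i O_F)=q^{-i}$ while $\mu_E(1+\pi^j O_E)=q^{-j}$ with $j$ and $i$ no longer equal once one crosses level $s$. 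I expect the final answer to come out as an explicit power of $q$ (times a factor reflecting the size-$2$ kernel at level $s$), and the bulk of the work is bookkeeping the index contributions rather than any conceptual difficulty, once the pushforward lemma and the norm diagrams are in hand.
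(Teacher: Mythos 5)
Your overall architecture coincides with the paper's (the set identity $1+\pi^{n}O_{E}^{*}=(1+\pi^{n}O_{E})-(1+\pi^{n+1}O_{E})$, Lemma \ref{pushforword lemma}, the norm diagrams, the case split at $n<s$, $n=s$, $n>s$), and your ``difference of balls'' variant $\mu_{F}[N^{-1}(1+\pi^{n}O_{E})]-\mu_{F}[N^{-1}(1+\pi^{n+1}O_{E})]$ would work, but one of your stated essential inputs is false: the norm is \emph{not} surjective from $1+\varpi^{i}O_{F}$ onto $1+\pi^{i}O_{E}$ for $i<s$. Its image has index $2$ in $1+\pi^{i}O_{E}$ for \emph{every} $i\leq s$, because the defect created at level $s$ by $x\mapsto x^{2}-\kappa x$ persists down the filtration; concretely, in Example \ref{example RP} one has $1+\varpi O_{F}=O_{F}^{*}$ and $N(O_{F}^{*})=\{u\equiv\pm1\pmod 8\}$, of index $2$ in $1+\pi O_{E}=\mathbb{Z}_{2}^{*}$, even though $1<s=2$. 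Surjectivity begins only at level $s+1$ (Corollary \ref{corollary norm 1}). What the argument below level $s$ actually needs is the opposite fact, injectivity: in characteristic $2$ the graded maps $x\mapsto x^{2}$ of \eqref{eq:3.1}, \eqref{eq:3.2} are bijections of $\overline{E}$ (Frobenius), so the norm induces the isomorphism $O_{F}^{*}/(1+\varpi^{n+1}O_{F})\cong O_{E}^{*}/(1+\pi^{n+1}O_{E})$ for $n<s$ that the paper invokes. This is what gives $N^{-1}(1+\pi^{n}O_{E})\cap O_{F}^{*}=1+\varpi^{n}O_{F}$ for $n\leq s$ (so the ball preimages have measure $q^{-n}$), and what shows that at level $s$ exactly two of the $q$ cosets of $1+\varpi^{s+1}O_{F}$ have norms falling into $1+\pi^{s+1}O_{E}$, whence $(q-2)q^{-(s+1)}$ for the annulus at $n=s$. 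If you feed your surjectivity claim into Lemma \ref{pushforword lemma} you do land on the correct value $(q-1)q^{-(n+1)}$, but only because two factor-of-$2$ errors cancel: the true image is half as large as you assume, and the true pushforward density on that image is $2$, not $1$. As stated, that step is not a proof. Relatedly, the parity of $n-s$ you anticipate for $n>s$ never enters: once you restrict $N$ to $1+\varpi^{s+1}O_{F}$, where it is genuinely surjective with pushforward equal to $\mu_{E}$, the size-$2$ kernel of $\widetilde{N}:(1+\varpi O_{F})/(1+\varpi^{s+1}O_{F})\to(1+\pi O_{E})/(1+\pi^{s+1}O_{E})$ gives $2(1-q^{-1})q^{-n}$ uniformly for all $n>s$, which is exactly how the paper does it.

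The second gap is coverage: the lemma is stated, and proved in the paper, for all three types of extension, with three different values --- $\frac{q^{2}-1}{q^{n+2}}$ (unramified), $\frac{2(q-1)}{q^{n+1}}$ (tamely ramified), and the piecewise formula (wildly ramified) --- whereas your plan treats only Case 3. The first two require different inputs: in Case 1 the norm is surjective on units, the residue-field norm has kernel of order $q+1$, and the computation compares multiplicative with additive Haar measure (the factor $1+q^{-1}$); in Case 2 one uses the two-element kernel of squaring on $\overline{E}^{*}$. Moreover your opening reduction to $U=1+\varpi O_{F}$ is legitimate only in Case 3, where $x\mapsto x^{2}$ is injective on $\overline{E}$; in Cases 1 and 2 the set $N^{-1}(1+\pi^{n}O_{E}^{*})$ meets $q+1$, respectively $2$, distinct residue classes of $O_{F}^{*}$, so that restriction discards most of the set being measured.
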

\begin{caseenv}
\item $\frac{q^{2}-1}{q^{n+2}}$
\item $\frac{2(q-1)}{q^{n+1}}$
\item $\begin{cases}
\begin{array}{cc}
(q-1)q^{-(n+1)} & \,\, if\,\,\, n<s\\
(q-2)q^{-(s+1)} & \,\, if\,\,\, n=s\\
2(q-1)q^{-(n+1)} & \: if\,\,\, n>s
\end{array}\end{cases}$
\end{caseenv}
~
\begin{proof}
In Case 1, since $N(O_{F}^{*})=O_{E}^{*}$ we conclude that the push-forward
of the multiplicative Haar measure of $O_{F}^{*}$ is an multiplicative
Haar measure of $O_{E}^{*}$ . In $O_{F}^{*}$ the additive and multiplicative
measures coincide, so we get that 
\[
\mu_{F*}=\frac{\mu_{F}(O_{F}^{*})}{\mu_{E}(O_{E}^{*})}\mu_{E}=\frac{1-q^{-2}}{1-q^{-1}}\mu_{E}=(1+q^{-1})\mu_{E}.
\]
Since $\mu_{E}(1+p^{n}O_{E}^{*})=q^{-n}(1-q^{-1})$, we get that:

\[
\mu_{F*}(1+\pi^{n}O_{E}^{*})=(1+q^{-1})\cdot q^{-n}(1-q^{-1})=\frac{q^{2}-1}{q^{n+2}}
\]

~

In Case 2, we have $N(1+\varpi O)=1+\pi O_{E}$ . Since this case
is ramified, we have that $\mu_{F}(O_{F}^{*})=\mu_{E}(O_{E}^{*})$.
So $\mu_{F}[N|_{1+\varpi O_{F}}^{-1}(1+\pi^{n}O_{E}^{*})]=q^{-n}(1-q^{-1})$
($N|_{1+\varpi O_{F}}$ is the restriction of the norm to $1+\varpi O_{F}$)
. But since the norm induces the square map between the groups
\[
\widetilde{N}:\frac{O_{F}^{*}}{1+\varpi O_{F}}\rightarrow\frac{O_{E}^{*}}{1+\pi O_{E}}.
\]
 The size of the kernel of the this map is 2. We have that :
\[
N^{-1}(1+\pi^{n}O_{E}^{*})=a\cdot N|_{1+\varpi O_{F}}^{-1}(1+\pi^{n}O_{E}^{*})\dot{\cup}N|_{1+\varpi O_{F}}^{-1}(1+\pi^{n}O_{E}^{*}),
\]
where $a$ is a non trivial representative of the kernel of $\widetilde{N}$.
So:

~

\[
\mu_{F*}(1+\pi^{n}O_{E}^{*})=2\mu_{F}[N|_{1+\varpi O_{F}}^{-1}(1+\pi^{n}O_{E}^{*})]=\frac{2(q-1)}{q^{n+1}}.
\]

~

~

In Case 3 we have $N(1+\varpi^{s+1}O_{F})=1+\pi^{s+1}O_{E}$ . So
if we restrict the norm to $1+\varpi^{s+1}O_{F}$ we have $\mu_{*}=\mu_{E}$.

~

From the diagrams of Section \ref{sub:The-Norm} that the norm map
induces an homomorphism of the finite groups: 
\[
\widetilde{N}:\frac{1+\varpi O_{F}}{1+\varpi^{s+1}O_{F}}\to\frac{1+\pi O_{E}}{1+\pi^{s+1}O_{E}}.
\]
 The size of the kernel of this map is 2.

~

We have that for $n>s$:

\[
N^{-1}(1+\pi^{n}O_{E}^{*})=a[N|_{1+\varpi^{s+1}O_{F}}^{-1}(1+\pi^{n}O_{E}^{*})]\dot{\cup}N|_{1+\varpi^{s+1}O_{F}}^{-1}(1+\pi^{n}O_{E}^{*}),
\]
($N|_{1+\varpi^{s+1}O_{F}}$ is the restriction of the norm to $1+\varpi^{s+1}O_{F}$),
where $a$ is a representative element of the kernel of $\widetilde{N}$.
But since $N|_{1+\varpi^{s+1}O_{F}}$ is surjective on $1+\pi^{s+1}O_{E}$
we can use Lemma \ref{pushforword lemma} to obtain:

\[
\mu_{F}[N^{-1}(1+\pi^{n}O_{E}^{*})]=2\mu_{E}(1+\pi^{n}O_{E}^{*})=2(1-q^{-1})q^{-n}.
\]

~

For $n=s$ , we can conclude from \eqref{eq:3.4} that $N^{-1}(1+\pi^{s}O_{E}^{*})=\underset{i=1}{\overset{q-2}{\cup}}a_{i}(1+\varpi^{s+1}O_{F}),$
where the $a_{i}$ are representatives of the cosets that are the
preimages of $\widetilde{N}$ . So $\mu_{F}(N^{-1}(1+\pi^{s}O_{E}^{*}))=(q-2)q^{-(s+1)}$
.

~

For $n<s$ the norm induces an isomorphism between the finite groups
$\frac{O_{F}^{*}}{1+\varpi^{n+1}O_{F}}$ and $\frac{O_{E}^{*}}{1+\pi^{n+1}O_{E}},$
therefore $\mu_{F}(N^{-1}(1+\pi^{n}O_{E}^{*}))=\mu_{E}(1+\pi^{n}O_{E}^{*})=(q-1)q^{-(n+1)}$
, altogether we obtain the result above.

~

~\end{proof}
\begin{cor}
(case 1) 
\end{cor}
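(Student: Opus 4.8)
The plan is to read the Case 1 statement off Lemma \ref{measure lemma} by slicing the unit group $O_F^*$ into the norm-shells $N^{-1}(1+\pi^n O_E^*)$, on each of which the integrand is locally constant, and then summing the resulting geometric series. The shells are indexed by $n=v_E(N(x)-1)$: since $N(O_F^*)\subseteq O_E^*$ by the residue-field square \eqref{eq:3.1-1}, each unit $x$ satisfies $N(x)\in 1+\pi^n O_E^*$ for exactly one $n\ge 1$, or else $N(x)\in O_E^*\setminus(1+\pi O_E)$; the Case 1 value of Lemma \ref{measure lemma} gives the measure $\tfrac{q^2-1}{q^{n+2}}$ of the $n$-th shell.

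First I would record that on the $n$-th shell the relevant modulus is constant, $|N(x)-1|_E=q^{-n}$, so that shell contributes $\tfrac{q^2-1}{q^{n+2}}\,q^{-ns}$ to the integral. Summing over $n\ge 1$ gives a geometric series in the variable $q^{-(1+s)}$, which closes up to $\tfrac{q^2-1}{q^2}\cdot\tfrac{q^{-(1+s)}}{1-q^{-(1+s)}}$; this is the entire contribution of $N^{-1}(1+\pi O_E)$.

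Next I would handle the boundary stratum $n=0$ on its own, since Lemma \ref{measure lemma} is stated only for $n\ge 1$. Its measure is $\mu_F(O_F^*)-\mu_F\big(N^{-1}(1+\pi O_E)\big)=(1-q^{-2})-\tfrac{q+1}{q^2}$, where the subtracted term is the $s=0$ specialization of the sum above, justified by the push-forward Lemma \ref{pushforword lemma}. On this stratum the modulus equals $1$, so it contributes a single constant. Adding this constant to the geometric sum and clearing denominators produces the claimed closed form.

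I expect the main obstacle to be bookkeeping rather than anything conceptual: in Case 1 one must keep the two normalizations separate, since $\#\bar E=q$ but $\#\bar F=q^2$, so that $|\pi|_E=q^{-1}$ while $|\varpi|_F=q^{-2}$, and one must isolate the $n=0$ endpoint by hand. Once the stratification is seen to be disjoint and exhaustive via \eqref{eq:3.1-1}, the remainder is a routine summation needing only Lemmas \ref{pushforword lemma} and \ref{measure lemma}.
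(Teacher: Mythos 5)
Your stratification is centered at the wrong point, and this is not mere bookkeeping. The integrand is $|1+N(x)|^{s_{1}}$, which degenerates where $N(x)$ is close to $-1$, not to $+1$. The paper's first (and essential) move is to use that $-1$ is a norm in the unramified case (the norm is surjective on units), writing $-1=N(\zeta)$ and substituting $x=\zeta y$, so that the integral becomes $\underset{O_{F}}{\int}|1-N(y)|^{s_{1}}dy$; only after this reduction do your shells $N^{-1}(1+\pi^{n}O_{E}^{*})$ carry a constant integrand $q^{-ns_{1}}$. As written, you declare the ``relevant modulus'' on the $n$-th shell to be $|N(x)-1|_{E}$, but that is not the integrand: when the residue characteristic is odd (which Case 1 allows), $|2|_{E}=1$ forces $|1+N(x)|=1$ identically on every one of your shells, while on your $n=0$ stratum the integrand is not constant at all --- it becomes arbitrarily small on the fibers over $-1+\pi^{n}O_{E}^{*}$, which are nonempty by surjectivity of the norm. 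So your computation, read literally, evaluates $\underset{O_{F}^{*}}{\int}|N(x)-1|^{s_{1}}dx$, a different integral; the missing idea is exactly the substitution that identifies the two.

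Second, the corollary integrates over $O_{F}$, not $O_{F}^{*}$, and your slicing never accounts for $\varpi O_{F}$. There $N(x)\in\pi^{2}O_{E}$, so the integrand equals $1$ and the non-units contribute $\mu_{F}(\varpi O_{F})=q^{-2}$. Your $n=0$ stratum has measure $(1-q^{-2})-\frac{q+1}{q^{2}}=\frac{q^{2}-q-2}{q^{2}}$, so without the non-unit piece your constant term is $\frac{q^{2}-q-2}{q^{2}}$, short of the claimed $\frac{q^{2}-q-1}{q^{2}}$ by exactly $q^{-2}$; no amount of ``clearing denominators'' will recover the stated formula. Both repairs fit inside your framework --- first substitute using $-1\in N(O_{F}^{*})$, then stratify $O_{F}=\varpi O_{F}\cup O_{F}^{*}$ and apply Lemma \ref{measure lemma} as you do --- and with them your argument becomes the paper's proof.
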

~

\begin{equation}
\underset{x\in O_{F}}{\int}|1+N(x)|^{s_{1}}dx=\frac{q^{2}-q-1}{q^{2}}+\frac{q^{2}-1}{q^{2}}\left(\frac{q^{-s_{1}-1}}{1-q^{-s_{1}-1}}\right)\label{I1}
\end{equation}

\begin{proof}
Note that $-1$ is a norm, hence $-1=\zeta\cdot\overline{\zeta}$
for some $\zeta\in O_{F}^{*}$. We substitute $x=\zeta y$:

\[
\underset{x\in O_{F}}{\int}|1+N(x)|^{s_{1}}dx=\underset{y\in O_{F}}{\int}|1-N(y)|^{s_{1}}dy=\underset{x\in O_{F}}{\int}|(-1)+N(y)|^{s_{1}}dy.
\]
The integrand is equal to $1$ for values of $y$ such that $N(y)\neq-1\,\,\, mod\,\,\,\pi O_{E}$.
From \eqref{eq:3.1-1} $q+1$ (the size of the kernel of $N_{\overline{F}/\overline{E}}$)
cosets of $O_{F}^{*}$ are map to the coset of $-1$. It is easy to
see that the integrand is also equal to $1$ in the set $\varpi O_{F}.$
Therefore: 
\[
\mu_{F}(-1+O_{E}^{*})=\underset{\mu_{F}([O_{F}^{*}-N^{-1}(-1+\pi O_{E})]}{\underbrace{\frac{(q^{2}-1)-q-1}{q^{2}}}}+\underset{\mu_{F}(\varpi O_{F})}{\underbrace{\frac{1}{q^{2}}}}=\frac{q^{2}-q-1}{q^{2}}.
\]

~

It follows immediately from Lemma \ref{measure lemma} that the integral
is equal to the following geometric sum:

\[
\underset{x\in O_{F}}{\int}|1+N(x)|^{s_{1}}dx=\frac{q^{2}-q-1}{q^{2}}+\underset{1\leq n}{\sum}\frac{q^{2}-1}{q^{2}}q^{-ns_{1}-n}=\frac{q^{2}-q-1}{q^{2}}+\frac{q^{2}-1}{q^{2}}\left(\frac{q^{-s_{1}-1}}{1-q^{-s_{1}-1}}\right)
\]

\end{proof}
~

~

For the next  lemmas, let $\chi^{*}$ be a non trivial character of
$E^{*}$ that is trivial on $N(F)$ . We will define $\chi^{*}(0)=0$. 

~

~
\begin{lem}
\label{charchter lemma 1}(Case 3 ) Let $0<m<s+1$ , $\theta\in O_{E}^{*}$
,then $\underset{x\in O_{F}}{\int}\chi^{*}(1+\theta\pi^{m}N(x))dx=0$\end{lem}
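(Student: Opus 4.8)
The plan is to reduce the integral over $O_F$ to a sum of integrals over the cosets partitioned by the value of $v_F(x)$, and on each piece to exploit the fact that $\chi^*$ is trivial on $N(F)$ together with the explicit description of which norms lie in $1+\pi^{m}O_E$ for $m<s+1$. First I would split $O_F$ according to the valuation of $x$. Writing $x=\varpi^{j}u$ with $u\in O_F^*$, we have $N(x)=\pi^{j}N(u)$ (using $\pi=N(\varpi)$ in Case 3 and the valuation diagram, so $v_E(N(x))=v_F(x)=j$). When $j\geq 1$, the argument $1+\theta\pi^{m}N(x)=1+\theta\pi^{m+j}N(u)$ lies in $1+\pi^{m+1}O_E$; I would show that this whole region contributes $\chi^*(1)=1$ times its measure, or more precisely handle it by the same density/coset argument as below, so the essential contribution comes from the unit part $j=0$.

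For the main piece $x\in O_F^*$, the integrand is $\chi^*(1+\theta\pi^{m}N(x))$ where $N(x)$ ranges over $N(O_F^*)$. The key idea is that as $x$ varies over $O_F^*$, the element $1+\theta\pi^{m}N(x)$ is forced by the hypothesis $m<s+1$ to take both values in and out of $N(F)$ in a balanced way. Concretely, I would use Corollary~\ref{corolary 3.3}: since $\theta\pi^{m}N(x)\in\pi^{m}O_E^*$ with $m\leq s$, the membership of $1+\theta\pi^{m}N(x)$ in $N(F)$ is governed only by its residue modulo $\pi^{m+s+1}$, and because $m<s+1$ the relevant diagram is \eqref{eq:3.2} (or \eqref{eq:3.3} at the boundary) where the norm induces the \emph{squaring} map $x\mapsto x^2$ on $\overline F=\overline E$. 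The squaring map on the additive/multiplicative structure of the residue-level data means that exactly half of the values $1+\theta\pi^{m}N(x)$ land in $N(F)$ and half do not, so $\chi^*$ takes the values $+1$ and $-1$ on sets of equal measure.

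To make the cancellation precise I would push forward the additive Haar measure $\mu_F$ under the norm, invoking Lemma~\ref{measure lemma} and Lemma~\ref{pushforword lemma}: the measure of the set of $x\in O_F^*$ with $1+\theta\pi^{m}N(x)$ in a fixed coset of $N(1+\varpi^{s+1}O_F)=1+\pi^{s+1}O_E$ is independent of the coset, so summing $\chi^*$ against these equal weights gives $\sum_{\text{cosets}}\chi^*(\text{coset})\cdot(\text{constant})=0$, since $\chi^*$ is a nontrivial character and hence sums to zero over the relevant finite quotient. The hard part will be organizing the boundary case and the change of variables $N(x)=\pi^{m}\mapsto$ shifting the argument cleanly so that the partition of $1+\pi^{m}O_E^*$ into $N(F)$-cosets lines up with the image of the norm map; once the squaring/$\lambda$-diagram dictionary from Section~\ref{sub:The-Norm} is in hand, the vanishing is the standard statement that a nontrivial character sums to zero over a group, but I expect the care needed in tracking when $m=s$ versus $m<s$ (diagram \eqref{eq:3.3} versus \eqref{eq:3.2}) to be the most delicate bookkeeping.
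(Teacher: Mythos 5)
There is a genuine gap, and it sits exactly at the boundary case $m=s$ that you yourself flag as the ``delicate bookkeeping.'' Your two structural claims --- (i) the region $v_{F}(x)\geq1$ contributes $\chi^{*}(1)=1$ times its measure, and (ii) the unit part vanishes because the values $1+\theta\pi^{m}N(x)$ fall ``half in, half out'' of $N(F)$ --- cannot both hold: together they would give $\int_{O_{F}}\chi^{*}(1+\theta\pi^{m}N(x))dx=q^{-1}+0\neq0$, contradicting the statement being proved. What actually happens is that for every $m$ in the allowed range exactly one of your claims fails. If $m<s$, claim (ii) is fine but claim (i) is false: for $x=\varpi^{j}u$ with $m+j\leq s$ the values $1+\theta\pi^{m+j}N(u)$ need not be norms, so that region cannot be replaced by its measure and must be handled recursively. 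If $m=s$, claim (i) is fine (by Corollary \ref{corollary norm 1} everything there is a norm), but claim (ii) is false: by \eqref{eq:3.3} the graded map at level $s$ is $x\mapsto x^{2}-\kappa x$, not the bijective squaring map, and the values $1+\theta\pi^{s}N(x)$, $x\in O_{F}^{*}$, run with equal weights over the cosets of $1+\pi^{s+1}O_{E}$ lying in $1+\pi^{s}O_{E}$ \emph{minus} the single coset $1+\pi^{s+1}O_{E}$ itself; since $\chi^{*}$ is trivial on that subtracted coset, the restricted character sum is $0-1=-1$ and the unit part equals $-q^{-1}$, not $0$. (This value $-q^{-1}$ is exactly what the paper later extracts in \eqref{eq:8.5}.) The lemma holds only because the nonzero unit part at level $s$ cancels the nonzero tail $v_{F}(x)\geq1$; your proposal, which tries to make each region vanish or trivialize separately, has no mechanism to produce this cross-cancellation, and your hedge (``handle it by the same coset argument'') does not rescue it, since any recursion in $j$ bottoms out at the level-$s$ case where your coset argument is the thing that breaks.

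The paper's proof avoids the valuation split altogether, and that is precisely why it is clean: it shows the integrand is constant on cosets of $\varpi^{s+1-m}O_{F}$ (via Corollary \ref{corolary 3.3}), uses that the norm induces a \emph{bijection} of the full additive quotients $O_{F}/\varpi^{s+1-m}O_{F}\to O_{E}/\pi^{s+1-m}O_{E}$ --- non-unit classes included --- and then transports the finite sum, through $x\mapsto1+\theta\pi^{m}x$, into a sum of $\chi^{*}$ over the \emph{entire} multiplicative group $(1+\pi^{m}O_{E})/(1+\pi^{s+1}O_{E})$, where it vanishes because $\chi^{*}$ is nontrivial there (as $m\leq s$). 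Keeping the non-unit classes inside one group-indexed character sum is what encodes, automatically, the cancellation you are missing. To salvage your route you would need to (a) compute the unit part at $m=s$ honestly and obtain $-q^{-1}$, and (b) run a downward induction on $m$ for the region $v_{F}(x)\geq1$; at that point you will have reconstructed the paper's argument with strictly more case analysis.
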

\begin{proof}
First, we show that the integrand is constant on the (additive) cosets
of group $\frac{O_{F}}{\varpi^{s+1-m}O_{F}}$ : 

If $x\in\varpi^{s+1-m}O_{F}$ then $\chi^{*}(1+\theta\pi^{m}N(x))=1$
by corollary \ref{corollary norm 1}. 

Let $x,y\in O_{F}^{*}$ . Assume that $x=yu$ ,where $u\in1+\varpi^{s+1-m}O_{F}$
(that is, they are in the same coset of $\frac{O_{F}}{\varpi^{s+1-m}O_{F}}$).
We know (see Section \ref{sub:The-Norm}) that $N(u)\in1+\pi^{s+1-m}O_{E}$.
So $N(u)=1+\pi^{s+1-m}\nu,\,\,\nu\in O_{F}$ . Then : 
\[
1+\theta\pi^{m}N(x)=1+\theta\pi^{m}N(y)N(u)=1+\theta\pi^{m}N(y)+\delta,\,\,\,\,\delta=\pi^{s+1}\theta\nu.
\]

Recall from Section \ref{sub:The-Norm} , that for $g_{1},g_{2}\in O_{E}^{*}$
if $|g_{1}-g_{2}|<q^{-s}$ ,then $g_{1}\in N(F)$ if and only if $g_{2}\in N(F)$
, since $|\delta|<q^{-s}$ we conclude that:

\[
\chi^{*}(1+\theta\pi^{m}N(x))=\chi^{*}(1+\theta\pi^{m}N(y)).
\]

We conclude that the integrand is constant on the cosets of $\frac{O_{F}}{\varpi^{s+1-m}O_{F}}.$

~

Since $s+1-m<s+1$, we conclude from the \eqref{eq:3.1-1},\eqref{eq:3.2}
that the norm map defines a bijection between the groups : 
\[
\widetilde{N}:\,\, H=\frac{O_{F}}{\varpi^{s+1-m}O_{F}}\rightarrow\frac{O_{E}}{\pi^{s+1-m}O_{E}}=H'.
\]

For $x\in O_{F}$ , we denote by $[x]_{H}$ the coset of $x$ in $H$
(and similarly for the rest of the group in this proof).

~

Since the integrand is well defined over $H$ , integrating over $O_{F}$
is equivalent to summing over $H$: 
\[
\underset{x\in O_{F}}{\int}\chi^{*}(1+\theta\pi^{m}N(x))dx=\mu_{F}(\varpi^{s+1-m}O_{F})\underset{[h]_{H}\in H}{\Sigma}\chi^{*}(1+\theta\pi^{m}N(h))
\]

~

We use the bijection $\widetilde{N}$ to calculate the sum over $H'$:

\[
\underset{[h]_{H}\in H}{\Sigma}\chi^{*}([1+\theta\pi^{m}N(h)]_{H})=\underset{[h']_{h'}\in H'}{\Sigma}\chi^{*}(1+\theta\pi^{m}h')
\]

~

The group $\frac{O_{E}}{\pi^{s+1-m}O_{E}}$ is isomorphic to the multiplicative
group:$U=\frac{1+\pi^{m}O_{F}}{1+\pi^{s+1}O_{F}}$ by the map $\tilde{\psi}:([x]_{H'})\mapsto[1+\pi^{m}\theta x]_{U}$,
so we get another bijection: 
\[
\tilde{\psi}:\frac{O_{F}}{\pi^{s+1-m}O_{F}}\rightarrow\frac{1+\pi^{m}O_{F}}{1+\pi^{s}O_{F}},
\]

The previous bijection ,$\tilde{\psi},$ tells us that summing over
$H'$ is equivalent to summing over the multiplicative group $U$
by $u=\tilde{\psi}(h),$ so

\[
\underset{[h']_{H'}\in H'}{\Sigma}\chi^{*}(1+\theta\pi^{m}h')=\underset{[u]_{U}\in U}{\Sigma}\chi^{*}(u).
\]

This sum surely vanish, since it is a summation of a non trivial character
of $U$.\end{proof}
\begin{lem}
\label{charchter lemma 2}(Case 3) Let $0<m<s$ , $\theta\in O_{E}^{*}$
then $\underset{x\in O_{F}^{*}}{\int}\chi^{*}(1+\theta\pi^{m}N(x))dx=0$\end{lem}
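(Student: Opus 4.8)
The plan is to reduce the statement to Lemma \ref{charchter lemma 1}, which already disposes of the same integral taken over all of $O_F$ in the wider range $0<m<s+1$. Since $O_F=O_F^*\cup\varpi O_F$ is a disjoint union and the additive Haar measure is additive on disjoint sets, I would first write
\[
\int_{O_F^*}\chi^*(1+\theta\pi^m N(x))\,dx=\int_{O_F}\chi^*(1+\theta\pi^m N(x))\,dx-\int_{\varpi O_F}\chi^*(1+\theta\pi^m N(x))\,dx .
\]
The first term on the right vanishes at once by Lemma \ref{charchter lemma 1}, since $0<m<s<s+1$ and $\theta\in O_E^*$.

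For the second term I would substitute $x=\varpi y$, so that $y$ ranges over $O_F$ and $dx=|\varpi|_F\,dy=q^{-1}\,dy$ for the additive Haar measure. Using the Case 3 normalization $\pi=N(\varpi)$ we have $N(\varpi y)=N(\varpi)N(y)=\pi N(y)$, whence
\[
\int_{\varpi O_F}\chi^*(1+\theta\pi^m N(x))\,dx=q^{-1}\int_{O_F}\chi^*(1+\theta\pi^{m+1}N(y))\,dy .
\]
Applying Lemma \ref{charchter lemma 1} a second time, now with the exponent $m+1$ in place of $m$ and the same unit coefficient $\theta$, shows that this integral is $0$, and hence the second term is $q^{-1}\cdot 0=0$. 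Subtracting, I obtain $\int_{O_F^*}\chi^*(1+\theta\pi^m N(x))\,dx=0$, as claimed.

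The one point that must be checked carefully is the legitimacy of the second appeal to Lemma \ref{charchter lemma 1}: the substitution raises the $\pi$-exponent from $m$ to $m+1$, so the lemma applies only when $0<m+1<s+1$, i.e. when $m<s$. This is exactly the sharpened hypothesis of the present lemma, and it is the sole place where $m<s$ (as opposed to the weaker $m<s+1$) is consumed; the rest of the argument is the routine change of variables, the only arithmetic input being that $\pi=N(\varpi)$ forces the norm to shift the exponent by precisely one. I therefore expect no genuine obstacle. Should a more self-contained argument be preferred, one can instead imitate the proof of Lemma \ref{charchter lemma 1} directly: restricting to $O_F^*$ turns the finite character sum into $\sum_{u\in U\setminus U_1}\chi^*(u)$ with $U=(1+\pi^m O_E)/(1+\pi^{s+1}O_E)$ and $U_1=(1+\pi^{m+1}O_E)/(1+\pi^{s+1}O_E)$, and both $\sum_{U}\chi^*$ and $\sum_{U_1}\chi^*$ vanish because $\chi^*$ is a nontrivial character on each subgroup, the nontriviality on $U_1$ again requiring $m+1\le s$.
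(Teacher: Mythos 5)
Your proposal is correct and is essentially identical to the paper's own proof: both reduce the integral over $O_F^*$ to the difference of the integral over $O_F$ (which vanishes by Lemma \ref{charchter lemma 1}) and the integral over $\varpi O_F$, which after the substitution $x=\varpi y$ becomes $q^{-1}\int_{O_F}\chi^*(1+\theta\pi^{m+1}N(y))\,dy$ and vanishes by a second application of Lemma \ref{charchter lemma 1}. Your added remark pinpointing that the hypothesis $m<s$ is consumed exactly in this second application (to guarantee $m+1<s+1$) is a nice clarification that the paper leaves implicit.
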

\begin{proof}
From Lemma \ref{charchter lemma 1}:
\[
\underset{x\in O_{F}}{\int}\chi^{*}(1+\theta\pi^{m}N(x))dx=0
\]

It is enough to prove that: 
\[
\underset{x\in\varpi O_{F}}{\int}\chi^{*}(1+\theta\pi^{m}N(x))dx=0.
\]

By substituting $x=\varpi y$, we get:

\[
\underset{x\in\varpi O_{F}}{\int}\chi^{*}(1+\theta\pi^{m}N(x))dx=q^{-1}\underset{y\in O_{F}}{\int}\chi^{*}(1+\theta\pi^{m+1}N(y))dy
\]

By Lemma \ref{charchter lemma 1} we conclude that the last integral
vanishes. 
\end{proof}
~
\begin{lem}
\label{Integral lemma} (Case 3) Let $F/E$ a ramified extension of
case 3 and $\eta\in O_{F}^{*}$ then if $Re(s_{1})>-\frac{1}{2}$,
and $s_{1}=-z_{1}+z_{2}-\frac{1}{2}$ we have
\begin{equation}
\underset{x\in O_{F}}{\int}|\eta+N(x)|^{s_{1}}dx=\frac{q^{2z_{2}}-q^{2z_{2}-1}+\chi^{*}(-\eta)\cdot(q^{s(2z_{1}-2z_{2})+2z_{1}}-q^{(2z_{1}-2z_{2})s+2z_{2}-1})}{q^{2z_{2}}-q^{2z_{1}}}
\end{equation}
\end{lem}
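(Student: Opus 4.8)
The plan is to stratify the domain $O_F$ according to $v_F(x)$ and, on the unit part, according to $n=v_E(\eta+N(x))$; here I read $\eta\in O_E^*$, so that $-\eta\in E^*$ and $\chi^*(-\eta)$ is defined. Recall that in Case~3 we chose $\pi=N(\varpi)$, so the valuation diagram of Section~\ref{sub:The-Norm} gives $v_E(N(x))=v_F(x)$; hence $x\in\varpi^kO_F^*$ forces $N(x)\in\pi^kO_E^*$. Writing $O_F=\{0\}\sqcup\bigsqcup_{k\ge0}\varpi^kO_F^*$ (the point $0$ being null), for every $k\ge1$ the element $\eta+N(x)$ is a unit, so the integrand is $1$ and these shells contribute $\sum_{k\ge1}\mu_F(\varpi^kO_F^*)=q^{-1}$. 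On the remaining shell $O_F^*$ the integrand is $1$ except on $A=\{x\in O_F^*:\eta+N(x)\in\pi O_E\}$. Because the reduction of $N$ to the residue fields is the Frobenius $x\mapsto x^2$ on $\bar E=\bar F$ by \eqref{eq:3.1}, which is bijective in characteristic $2$, the residue $\overline{-\eta}$ has a unique square root $\bar c\in\bar F^*$ and $A=\{x:\lambda_{0,F}(x)=\bar c\}$ is a single residue disc of measure $q^{-1}$. Thus $O_F^*\setminus A$ contributes $1-2q^{-1}$, and it remains to evaluate $\int_A|\eta+N(x)|^{s_1}dx$.

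For the core of the argument I stratify $A=\bigsqcup_{n\ge1}A_n$ with $A_n=\{x\in O_F^*:\eta+N(x)\in\pi^nO_E^*\}=\{x\in O_F^*:N(x)\in-\eta(1+\pi^nO_E^*)\}$. Since $N(O_F^*)$ has index $2$ in $O_E^*$ and the additive measure $\mu_E$ restricted to $O_E^*$ is a Haar measure for its group structure, Lemma~\ref{pushforword lemma} applied to $N:O_F^*\to N(O_F^*)$ gives $\mu_F(N^{-1}(S))=2\mu_E(S)$ for every norm-set $S\subseteq N(O_F^*)$. Consequently $\mu_F(A_n)=2\mu_E\!\left(-\eta(1+\pi^nO_E^*)\cap N(F)\right)$, which equals $2\mu_E(B_n^+)$ when $-\eta\in N(F)$ and $2\mu_E(B_n^-)$ otherwise, where $B_n^{+}$ and $B_n^{-}$ are the norm and non-norm parts of $1+\pi^nO_E^*$ (multiplication by the unit $-\eta$ preserves $\mu_E$). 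Here $2\mu_E(B_n^+)=\mu_F(N^{-1}(1+\pi^nO_E^*))$ is exactly the Case~3 value of Lemma~\ref{measure lemma}, and $2\mu_E(B_n^-)=2(1-q^{-1})q^{-n}-2\mu_E(B_n^+)$. A short computation then yields $\mu_F(A_n)$ in closed form: the two branches agree for $n<s$ (value $(1-q^{-1})q^{-n}$), while for $\chi^*(-\eta)=1$ one gets the full Lemma~\ref{measure lemma} values (note the anomaly at $n=s$ coming from the Artin--Schreier map in \eqref{eq:3.3}), and for $\chi^*(-\eta)=-1$ one gets $q^{-s}$ at $n=s$ and $0$ for $n>s$ (the latter because $1+\pi^nO_E^*\subseteq N(F)$ for $n>s$ by Corollary~\ref{corollary norm 1}). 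I would record that $\chi^*(-\eta)=1$ exactly when $-\eta\in N(F)$, so these two branches are precisely the two values of $\chi^*(-\eta)$.

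It remains to assemble the geometric series. On $A_n$ one has $|\eta+N(x)|^{s_1}=q^{-2ns_1}$ since $|\pi|_F=q^{-2}$, and under $s_1=-z_1+z_2-\tfrac12$ the common ratio becomes $q^{-1}\cdot q^{-2s_1}=q^{2z_1-2z_2}=:t$, with $|t|<1$ exactly when $Re(s_1)>-\tfrac12$, which is the convergence hypothesis. Each term $\mu_F(A_n)q^{-2ns_1}$ then collapses to a constant times $t^n$ (for instance $(1-q^{-1})t^n$ for $n<s$), so $\int_A$ is a finite combination of geometric series split at $n=s$; the factor $1-t=(q^{2z_2}-q^{2z_1})/q^{2z_2}$ produces the denominator $q^{2z_2}-q^{2z_1}$. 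Adding the earlier contribution $1-q^{-1}$ and simplifying, the two cases $\chi^*(-\eta)=\pm1$ combine into the single expression in the statement, with the $t^s=q^{(2z_1-2z_2)s}$ powers accounting for the $\chi^*(-\eta)$-term.

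I expect the main obstacle to be the second paragraph: pinning down $\mu_F(A_n)$ uniformly in $n$ and in the norm class of $-\eta$. The delicate points are the index-$2$ jump at $n=s$ (where \eqref{eq:3.3} replaces the bijective Frobenius by the Artin--Schreier map with kernel of size $2$) and the fact that on the non-norm branch the stratification terminates at $n=s$; together these are exactly what generate the $\chi^*(-\eta)$-dependent numerator and the $s$-dependent exponents in the final formula, so any miscount there propagates directly into the answer.
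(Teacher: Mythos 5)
Your proof is correct, and it keeps the paper's overall skeleton --- stratify by $n=v_{E}(\eta+N(x))$, find the measure of each level set, and sum geometric series in $t=q^{2z_{1}-2z_{2}}$ --- but the key middle step runs along a genuinely different route. The paper splits into the two norm classes of $-\eta$ at the outset and treats them by separate substitutions: for $-\eta\in N(F)$ it reduces to $\eta=-1$ and applies Lemma \ref{measure lemma} directly, while for $-\eta\notin N(F)$ it reduces w.l.o.g.\ to $-\eta\in1+\pi^{s}O_{E}$ and then counts cosets by hand through the filtration isomorphisms $(1+\varpi^{i}O_{F})/(1+\varpi^{i+1}O_{F})\to(1+\pi^{i}O_{E})/(1+\pi^{i+1}O_{E})$, $i<s$. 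You avoid both substitutions: the pushforward identity $\mu_{F}(N^{-1}(S))=2\mu_{E}(S)$ for $S\subseteq N(O_{F}^{*})$ is a legitimate application of Lemma \ref{pushforword lemma} (since $N(O_{F}^{*})$ has index $2$ in $O_{E}^{*}$ and $\mu_{E}$ restricted to units is indeed multiplicative Haar), and it converts both norm classes into data from Lemma \ref{measure lemma}, the non-norm class via the complementation $2\mu_{E}(B_{n}^{-})=2(1-q^{-1})q^{-n}-2\mu_{E}(B_{n}^{+})$. Your resulting table of measures agrees with the paper's in every range: $(1-q^{-1})q^{-n}$ for $n<s$ in both classes; $(q-2)q^{-s-1}$ versus $q^{-s}$ at $n=s$; $2(1-q^{-1})q^{-n}$ versus $0$ for $n>s$. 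Your uniform treatment also buys a built-in consistency check (the measures visibly sum to $1$), which incidentally exposes a typo in the paper's norm-case display: its $j=s$ term is written as $q^{s(2z_{1}-2z_{2})}$ but must carry the coefficient $1-2q^{-1}$ for the stated final formula (which is correct) to follow. Finally, reading the hypothesis as $\eta\in O_{E}^{*}$ is the right call: the statement's $\eta\in O_{F}^{*}$ is evidently a slip, since $\chi^{*}(-\eta)$ only makes sense for $-\eta\in E^{*}$, and the lemma is only ever invoked (Section 7.1) with $\eta=\epsilon_{1}/\epsilon_{2}\in O_{E}^{*}$.
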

\begin{proof}
Suppose $-\eta\notin N(F)$ . Let $-\nu\in1+\pi^{s}O_{E}$ be an element
that is not a norm, note that $-\nu=(-\mu)\cdot\epsilon$ and $\epsilon\in N(F^{*})$,
that is $\epsilon=\alpha\overline{\alpha}$ for some $\alpha\in O_{F}^{*}$.
By making coordinate transformation $\alpha y=x$, we can assume w.l.o.g
that $-\eta\in1+\pi^{s}O_{E}$. 

~

In the coset $\varpi O_{F}$, it is easy to see that the integrand
is equal to $1$ .

~

The norm map induces an isomorphism $\widetilde{N_{1}}:\frac{O_{F}^{*}}{1+\varpi O_{F}}\rightarrow\frac{O_{E}^{*}}{1+\pi O_{E}}$
(The square map $x\to x^{2}$ of the finite group $\overline{F}^{*}=\overline{E}^{*}$)
and so $q-2$ cosets of $\frac{O_{F}^{*}}{1+\varpi O_{F}}$ do not
map by $\widetilde{N_{1}}$ to the coset $1+\pi O_{E}$ (the coset
of $-\eta$) and so $N(x)-(-\eta)\in O_{E}^{*}$ and the integrand
on those cosets is equal to $1$ .

~

So far we have:
\[
\underset{x\in O_{F}}{\int}|\eta+N(x)|^{s_{1}}dx=q^{-1}+\underset{x\in O_{F}^{*}}{\int}|\eta+N(x)|^{s_{1}}dx=
\]

\[
\underset{\mu_{F}(\varpi O_{F})}{\underbrace{q^{-1}}}+\underset{\mu_{F}[O_{F}^{*}-(1+\varpi O_{F})]}{\underbrace{(q-2)q^{-1}}}+\underset{x\in1+\varpi O_{F}}{\int}|\eta+N(x)|^{s_{1}}dx.
\]

~

Then for any $1\leq i<s$ we have that the norm map induce a isomorphism
\[
\widetilde{N_{2}}:\frac{1+\varpi^{i}O_{F}}{1+\varpi^{i+1}O_{F}}\rightarrow\frac{1+\pi^{i}O_{E}}{1+\pi^{i+1}OE}
\]
(The square map of the additive groups $\overline{F}=\overline{E}$).
So, for each $i$ , $q-1$ cosets of $\frac{1+\varpi^{i}O_{F}}{1+\varpi^{i+1}O_{F}}$
do not map to the coset $1+\pi^{i+1}O_{E}$ (the coset of $-\eta$)
and the value of the integrand is $q^{-2is_{1}}$. We have:

~
\[
\underset{x\in O_{F}}{\int}|\eta+N(x)|^{s_{1}}dx=q^{-1}+\frac{q-2}{q}+\underset{i=1}{\overset{s-1}{\sum}}q^{-2is_{1}}(q-1)q^{-i-1}+\underset{x\in1+\varpi^{s}O_{F}}{\int}|(-\eta)-N(x)|^{s_{1}}dx.
\]

~

Since $-\eta\notin N(1+\varpi^{s}O_{F})$ and $[(-\eta)+\pi^{s}O_{E}]\cap N(1+\varpi^{s}O_{F})=\phi$,
we have that in $1+\varpi^{s}O_{F}$ the integrand is equal to $q^{-2s_{1}\cdot s}$.

~

Altogether:

\[
\underset{x\in O_{F}}{\int}|\eta+N(x)|^{s_{1}}dx=(1-q^{-1})\underset{j=0}{\overset{s-1}{\sum}}q^{(-2s_{1}-1)j}+q^{s\cdot(-2s_{1}-1)}=
\]

\[
\frac{q^{2z_{2}}-q^{2z_{2}-1}+q^{(2z_{1}-2z_{2})s+2z_{2}-1}-q^{s(2z_{1}-2z_{2})+2z_{1}}}{q^{2z_{2}}-q^{2z_{1}}}.
\]

~

~

Suppose $-\eta\in N(F).$ Then $-\eta=N(\nu),\,\,\nu\in O_{F}^{*}$
by the substitution $\nu y=x$ and Lemma \eqref{measure lemma}: 
\[
\underset{x\in O_{F}}{\int}|\eta+N(x)|^{-z_{1}+z_{2}-\frac{1}{2}}dx=\underset{y\in O_{F}}{\int}|-1+N(y)|^{-z_{1}+z_{2}-\frac{1}{2}}dy=
\]
\[
(1-q^{-1})\overset{s-1}{\underset{j=0}{\sum}}q^{(2z_{1}-2z_{2})j}+2\overset{\infty}{\underset{j=s+1}{\sum}}(1-q^{-1})q^{(2z_{1}-2z_{2})j}+q^{s(2z_{1}-2z_{2})}
\]
\[
=\frac{q^{2z_{2}}-q^{2z_{2}-1}+q^{(2z_{1}-2z_{2})s+2z_{1}}-q^{s(2z_{1}-2z_{2})+2z_{2}-1}}{q^{2z_{2}}-q^{2z_{1}}}.
\]

\end{proof}

\section*{Representative of the K-Orbits (Ramified dyadic)}

\section{Classification of K-Orbits\label{sec:Classification-of-K-Orbits}}

~

We summarize results obtained by Jacobowitz \cite{key-10} that classified
Hermitian lattices over local fields ($O_{F}$-module equipped with
a Hermitian product $\langle\cdot,\cdot\rangle_{L}$ . An Hermitian
lattice with basis $\{x_{\lambda}\}$ can be represented by an Hermitian
matrix: $L_{\lambda\mu}=\langle x_{\lambda},x_{\mu}\rangle_{L}$ and
equivalence classes of lattices correspond to orbits of Hermitian
matrices under the action of $K$ given by $k\cdot x=kxk^{*}$ ,$k\in K,\, x\in X$.

~

The following definitions are also presented on \cite{key-10}. We
translate them in term of matrices: 

~
\begin{defn}
An hermitian matrix is called \textbf{\uline{$\varpi^{i}$ modular}}
if for every primitive vector $x\in M_{n,1}$ (that is, a vector $x=(x_{i}),\,\,\exists i_{0},1\leq i_{0}\leq n\,\, s.t\,\, x_{i_{0}}\in O_{F}^{*}$)
there is a vector $w\in M_{n,1}$ such that $w^{*}Ax=\varpi^{i}$. 
\end{defn}
~
\begin{defn}
\textbf{\uline{The norm ideal:}} $nL$ The ideal of $O_{F}$ generated
by elements $\langle v,v\rangle_{L},v\in M_{1,n}(O_{F})$ 
\end{defn}
~
\begin{defn}
\textbf{\uline{The scalar ideal }}$sL$ The Ideal generated by
$\langle v,w\rangle_{L},v,w\in M_{1,n}(O_{F})$
\end{defn}
~
\begin{defn}
$dL=det(L)\,(mod\, N(F)\,)\in E^{*}/N(F)$.
\end{defn}
~
\begin{defn}
$\varpi^{i}$-modular hyperbolic matrix: $H(i)\approx\left(\begin{array}{cc}
0 & \varpi^{i}\\
\overline{\varpi}^{i} & 0
\end{array}\right).$

~

The norm ideal, the scalar ideal, $dL$ and $\varpi^{i}-$modularity
are all invariants of the lattice under the action of $K$.
\end{defn}
Jacobowitz investigated the ramified non-dyadic case relevant for
our problem. 

~

~

It was shown :

~

\textbf{1.} Every Hermitian matrix is equivalent to the direct sum
of $\varpi^{i}$ modular $2\times2$ and $1\times1$ Hermitian matrices.

~

\textbf{2. }In the case of RP : $n\cdot H(i)=\varpi^{s+2i}O_{F}$

~~~~In the case of RU: $n\cdot H(i)=\varpi^{s-1+2i}O_{F}$

~

\textbf{3. }If $L$ is $\varpi^{i}$ modular then:$n\cdot H(i)\subseteq n\cdot L$

~

~

We conclude the following representatives of $K\backslash X$ in the
case :$n\cdot H(i)=n\cdot L:$

~

~

\textbf{RP}

~

1-modular matrices:

$1.\left(\begin{array}{cc}
0 & 1\\
1 & 0
\end{array}\right),\,\,\,2.\left(\begin{array}{cc}
\pi^{\frac{s}{2}} & 1\\
1 & -\pi^{\frac{s}{2}}\rho
\end{array}\right)$

~

$\varpi$-modular matrices:

~

$3.\left(\begin{array}{cc}
0 & \varpi\\
\overline{\varpi} & 0
\end{array}\right)$

\textbf{RU}

~

1-modular matrices:

$1.\left(\begin{array}{cc}
0 & 1\\
1 & 0
\end{array}\right)$

~

$\varpi$-modular matrices:

~

$2.\left(\begin{array}{cc}
\pi^{\frac{s+1}{2}} & \varpi\\
\overline{\varpi} & \pi^{\frac{s+1}{2}}\rho
\end{array}\right),\,\,3.\left(\begin{array}{cc}
0 & \varpi\\
\overline{\varpi} & 0
\end{array}\right)$

~

~

Now, suppose $n\cdot H(i)\subset n\cdot L=\varpi^{2m}O_{F}$ :

~

The other $\varpi^{i}-$modular planes: (see\cite{key-10} p. 459)

~

\textbf{~RP}

~

1-modular matrices:

$0<2m<s$

$4.\left(\begin{array}{cc}
\pi^{m} & 1\\
1 & 0
\end{array}\right),\,\,\,5.\left(\begin{array}{cc}
\pi^{m} & 1\\
1 & \pi^{s-m}\rho
\end{array}\right)$

~

~

$\varpi$-modular matrices:

$0<2m<s+2$

$6.\left(\begin{array}{cc}
\pi^{m} & \varpi\\
\overline{\varpi} & 0
\end{array}\right),\,\,\,7.\left(\begin{array}{cc}
\pi^{m} & \varpi\\
\overline{\varpi} & \pi^{s-m+1}\rho
\end{array}\right)$

\textbf{~}

\textbf{RU}

~

1-modular matrices:

$0<2m<s-1$

$4.\left(\begin{array}{cc}
\pi^{m} & 1\\
1 & 0
\end{array}\right),\,\,\,5.\left(\begin{array}{cc}
\pi^{m} & 1\\
1 & \pi^{s-m}\rho
\end{array}\right)$

~

$\varpi$-modular matrices:

$0<2m<s+1$

~$6.\left(\begin{array}{cc}
\pi^{m} & \varpi\\
\overline{\varpi} & 0
\end{array}\right),\,\,\,7.\left(\begin{array}{cc}
\pi^{m} & \varpi\\
\overline{\varpi} & \pi^{-m+s+1}\rho
\end{array}\right)$

~

~

We now deal with the case when the lattice is a sum of two $\pi^{i}$-modular
matrices:

~

We have the following representatives:

(Both RP and RU) 

~

$\left(\begin{array}{cc}
\pi^{\lambda_{1}}\epsilon_{1}\\
 & \pi^{\lambda_{2}}\epsilon_{2}
\end{array}\right)$, $\lambda_{1}\geq\lambda_{2},\,\,\epsilon_{i}\in\{1,\Delta\},\epsilon_{1}=1\,\, if\,\,\lambda_{1}-\lambda_{2}\leq s$

(see \cite{key-10} p. 463)

~

The representative we presented (beside the diagonals) are the 1 and
$\varpi$ modular representatives , up to multiplying by the scalar
$\pi^{a}\,\,\,\,(a\in\mathbb{Z}),$ this are all the representatives
of $K\backslash X$ .

~

We denote by $[K\backslash X]$ to be the set of representatives that
were have presented.

~

~

~

~

\section{The spherical functions for Case 3:}

~

$L(x,\chi_{1},\chi_{2},s_{1},s_{2})=\underset{}{\underset{K'}{\int}\chi_{1}(d_{1}(k\cdot x))\cdot|d_{1}(k\cdot x)|^{s_{1}}\chi_{2}(d_{2}(k\cdot x))\cdot|d_{2}(k\cdot x)|^{s_{2}}\, dk}$~

~

Substitute:

$s_{1}=-z_{1}+z_{2}-\frac{1}{2}$

$s_{2}=-z_{2}+\frac{1}{4}$

~

We denote : $z=(z_{1},z_{2}).$

~

For any $a_{1},a_{2},z_{1}z_{2}\in\mathbb{C}$ 
\[
\langle(a_{1},a_{2}),(b_{1},b_{2})\rangle=\langle(a_{1},a_{2}),z\rangle=a_{1}z_{1}+a_{2}z_{2}
\]

~

By abuse of notation we will some times denote:
\[
L(x,\chi_{1},\chi_{2},z_{1},z_{2})=L(x,\chi_{1},\chi_{2},s_{1}(z),s_{2}(z))
\]

~

From now on we will denote by $\sigma\in\{\sigma_{1},\sigma_{2}\}=\Sigma_{2}\subset Aut(C(q^{\pm z_{1}},q^{\pm z_{2}}))$
where :

\[
\sigma_{1}=Id
\]

\[
\sigma_{2}(q^{z_{1}})=q^{z_{2}},\,\,\,\sigma_{2}(q^{z_{2}})=q^{z_{1}}
\]
.

~
\begin{thm}
\label{main theorem}Let $x\in[K\backslash X]$ and 

\[
L(x,\chi_{1},\chi_{2},s_{1},s_{2})=\underset{}{\underset{K'}{\int}\underset{i=1}{\overset{2}{\prod}}d_{i}(\chi_{i}(k\cdot x))|d_{i}(k\cdot x)|^{s_{i}}dk}
\]

as was defined in Section \ref{sec:Introduction} then ~$L(x,\chi_{1},\chi_{2},z_{1},z_{2})$
is equal to the following :
\end{thm}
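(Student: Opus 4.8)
The plan is to collapse the two–variable integral over $K$ into a finite sum of one–variable $p$-adic integrals of exactly the shapes already evaluated in Section \ref{sec:Useful-Integrals}, and then to read off each representative case by case. The first move is to dispose of the determinant factor. For $k\in K$ and $x\in X$ we have $d_{2}(k\cdot x)=\det(kxk^{*})=\det(k)\det(x)\overline{\det(k)}=N(\det k)\det(x)$, and since $\det k\in O_{F}^{*}$ we get $N(\det k)\in N(F^{*})$. Hence $|d_{2}(k\cdot x)|=|\det x|$, and because $\chi_{2}$ is trivial on $N(F^{*})$ also $\chi_{2}(d_{2}(k\cdot x))=\chi_{2}(\det x)$ (note $\det x\in E^{*}$ as $x$ is Hermitian). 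Thus the whole $d_{2}$-contribution is the constant $\chi_{2}(\det x)\,|\det x|^{s_{2}}$, which factors out of the integral.

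The second move reduces the remaining $\int_{K}$ to an integral over primitive vectors. Writing the first row of $k$ as $(a,b)$, a direct expansion of the $(1,1)$ entry gives $d_{1}(k\cdot x)=x_{11}N(a)+x_{22}N(b)+Tr(x_{12}a\overline{b})=:Q_{x}(a,b)$, the Hermitian form of $x$ evaluated on $(a,b)$; in particular it depends only on the first row. The map $k\mapsto(a,b)$ carries normalized Haar measure on $K$ to the unique right-$K$-invariant probability measure on the set $P=O_{F}^{2}\setminus\varpi O_{F}^{2}$ of primitive vectors, which by $GL_{2}(O_{F})$-invariance of additive Haar measure is simply the normalized restriction of $d\mu_{F}\,d\mu_{F}$ (total mass $1-q^{-2}$). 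Therefore
\[
L(x,\chi_{1},\chi_{2},z)=\frac{\chi_{2}(\det x)\,|\det x|^{s_{2}}}{1-q^{-2}}\int_{P}\chi_{1}(Q_{x}(a,b))\,|Q_{x}(a,b)|^{s_{1}}\,d\mu_{F}(a)\,d\mu_{F}(b).
\]

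The third step evaluates $\int_{P}$ for each $x\in[K\backslash X]$ from Section \ref{sec:Classification-of-K-Orbits}. For the diagonal representatives $Q_{x}=\pi^{\lambda_{1}}\epsilon_{1}N(a)+\pi^{\lambda_{2}}\epsilon_{2}N(b)$ is a sum of two norm forms; splitting $P$ by the valuations of $a$ and $b$ and factoring out the dominant term reduces each piece to an integral $\int|\eta+N(\cdot)|^{s_{1}}$ handled by Lemma \ref{Integral lemma}, with the norm-level measures supplied by Lemma \ref{measure lemma}. For the off-diagonal hyperbolic-type representatives such as $\left(\begin{array}{cc}0 & \varpi^{i}\\ \overline{\varpi}^{i} & 0\end{array}\right)$, on the region where one coordinate is a unit the measure-preserving substitution $u=a\overline{b}$ turns the cross term into $Tr(x_{12}u)$, reducing the integral to a trace integral controlled by $Tr(\varpi^{i}O_{F})=\pi^{j(i)}O_{E}$ from Section \ref{sub:The-Trace}; the mixed representatives (a diagonal $\pi^{m}$ together with a cross term) combine both maneuvers. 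In every case the $\chi_{1}$-twisted contributions either collapse to a $\chi^{*}(-\eta)$-type sign through Corollary \ref{corolary 3.3}, or vanish outright as complete character sums by Lemmas \ref{charchter lemma 1} and \ref{charchter lemma 2}. Summing the resulting geometric series and substituting $s_{1}=-z_{1}+z_{2}-\tfrac{1}{2}$, $s_{2}=-z_{2}+\tfrac{1}{4}$ yields the stated closed forms.

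The \textbf{main obstacle} is the bookkeeping for the off-diagonal and mixed representatives, where the break points of the geometric series land at valuation $s$ — exactly where the norm map displays its exceptional size-$2$ kernel/cokernel behavior encoded in diagram \eqref{eq:3.3}. Matching the boundary contributions at level $s$ correctly in each of the RP and RU subcases, whose trace and norm valuations are shifted differently, is the delicate, error-prone core of the argument; the two factorizations above are arranged precisely so that each such boundary term is handed to one of the Section \ref{sec:Useful-Integrals} lemmas rather than evaluated ad hoc.
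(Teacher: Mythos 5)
Your first two moves are correct, and they reproduce exactly the paper's own reduction (Lemma \ref{formula lemma}): splitting the primitive-vector set $P$ into the part where the first coordinate is a unit and the part where it lies in $\varpi O_{F}$, and scaling out that unit, turns your single integral over $P$ into the paper's two integrals
\[
\frac{q}{q+1}\Bigl[\int_{O_{F}}\chi_{1}\bigl(x_{11}+Tr(\overline{x_{12}}t)+x_{22}N(t)\bigr)\,|\cdots|^{s_{1}}dt+\int_{\varpi O_{F}}\chi_{1}\bigl(x_{11}N(y)+Tr(\overline{x_{12}}y)+x_{22}\bigr)\,|\cdots|^{s_{1}}dy\Bigr],
\]
with the same constant $\frac{1}{1-q^{-2}}\cdot(1-q^{-1})=\frac{q}{q+1}$; the paper obtains this via Iwahori cosets $Bb_{i}$ rather than primitive vectors, a purely cosmetic difference. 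The factoring of $\chi_{2}(\det x)|\det x|^{s_{2}}$ is likewise the paper's observation that $d_{2}(k\cdot x)=N(\det k)\det x$. (You should also note, as the paper does, that the locus $d_{1}(k\cdot x)=0$ has measure zero, so integrating over $K$ rather than $K'$ is harmless.)

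The genuine gap is the third step: the content of this theorem is the list of explicit closed forms (ten representatives in each of the RP and RU cases, plus the vanishing statements for $\chi_{1}=\chi^{*}$), and your proposal derives none of them. What you describe as routine ``bookkeeping'' occupies the paper's entire Chapters 7 and 8, and several of those computations require ideas your outline does not contain. The vanishing results for $\chi_{1}=\chi^{*}$ are \emph{not} direct applications of Lemmas \ref{charchter lemma 1} and \ref{charchter lemma 2}: for representatives such as $\bigl(\begin{smallmatrix}\pi^{m} & 1\\ 1 & -\pi^{s-m}\rho\end{smallmatrix}\bigr)$ the paper must first complete the quadratic expression to a norm via identities like $\pi^{m}N(y)+Tr(y)=N(\varpi^{-m}+\varpi^{m}y)-\pi^{-m}$, then decompose the domain into cosets $a_{i}(1+\varpi^{s+1}O_{F})$ on which the absolute value is constant, push forward along $N$, and recognize each piece as a nontrivial character sum over a group $1+\pi^{s+1-k}O_{E}$ that contains non-norms. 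Similarly, the diagonal cases at the threshold $\lambda_{1}-\lambda_{2}=s$ (where $I_{1}$ and $I_{2}$ must be recombined before either vanishes) and $\lambda_{1}-\lambda_{2}>s$ (the only nonvanishing $\chi^{*}$ case, where the boundary term at $i=\lambda_{1}-\lambda_{2}-s$ contributes $-q^{-1}$ times a norm integral via Eq.~\eqref{eq:8.5}) each need a dedicated argument. You correctly identify the level-$s$ boundary as the delicate point, but identifying the obstacle is not the same as resolving it; as written, the proposal is a plan for a proof, not a proof.
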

\textbf{\textit{\large RP: }}{\large \par}

~

\textbf{\uline{1.}} $L(x,\chi^{*},\chi_{2},z_{1},z_{2})=0$ unless
$x=\left(\begin{array}{cc}
\pi^{\lambda_{1}}\epsilon_{1}\\
 & \pi^{\lambda_{2}}\epsilon_{2}
\end{array}\right)$ with $\lambda_{1}-\lambda_{2}>s$ 

~

\textbf{\uline{2.}} If $x=\left(\begin{array}{cc}
\pi^{\lambda_{1}}\epsilon_{1}\\
 & \pi^{\lambda_{2}}\epsilon_{2}
\end{array}\right)$ with $\lambda_{1}-\lambda_{2}>s$ then: (Hironaka )

~~$L(x,\chi^{*},\chi_{2},z)=$

$\frac{q^{\frac{\lambda_{2}-\lambda_{1}}{2}}\chi^{*}(\epsilon_{2})\chi_{2}(\epsilon_{1}\epsilon_{2})}{1+q^{-1}}q^{2sz_{2}}(q^{2z_{2}}-q^{2z_{1}-1})\times\underset{\sigma\in\Sigma_{2}}{\sum}\sigma(\frac{q^{2\langle(\lambda_{1}-s,\lambda_{2}),z\rangle}}{q^{2z_{1}}-q^{2z_{2}}})$

~

~

\textbf{\uline{3.}}

\selectlanguage{american}%
$L(\left(\begin{array}{cc}
\pi^{\lambda_{1}}\epsilon_{1}\\
 & \pi^{\lambda_{2}}\epsilon_{2}
\end{array}\right),1,\chi_{2},z)=\frac{q^{\frac{\lambda_{2}-\lambda_{1}}{2}}\chi_{2}(\epsilon_{1}\epsilon_{2})q^{-2sz_{2}}}{(q^{-1}+1)(q^{2z_{2}}-q^{2z_{1}})}\times$

$[\chi^{*}(-\epsilon_{1}\epsilon_{2})q^{<(\lambda_{1}+s,\lambda_{2}),2z>}(q^{2z_{1}}-q^{2z_{2}-1})+q^{\langle(\lambda_{2},\lambda_{1}+s),2z\rangle}(q^{2z_{2}}-q^{2z_{1}-1})]$

~

\selectlanguage{english}%
~

\textbf{\uline{4.}}

$L(\left(\begin{array}{cc}
 & 1\\
1
\end{array}\right),1,\chi_{2},z)=\frac{\chi_{2}(-1)(1-q^{-1})q^{l-2s\cdot z_{2}}}{q^{-1}+1}[\frac{q^{s(z_{1}+z_{2})}(q^{2z_{1}}+q^{2z_{2}})}{q^{2z_{2}}-q^{2z_{1}}}]$

~

~

\textbf{\uline{5.}}

$L(\left(\begin{array}{cc}
 & \varpi\\
\overline{\varpi}
\end{array}\right),1,\chi_{2},z)==\chi_{2}(-1)(1-q^{-1})q^{l+\frac{1}{2}}q^{-2sz_{2}}q^{s(z_{1}+z_{2})}[\frac{q^{2z_{1}+2z_{2}}}{q^{2z_{2}}-q^{2z_{1}}}]$

~

~

\textbf{\uline{6.}}

$L(\left(\begin{array}{cc}
\pi^{\frac{s}{2}} & 1\\
1 & -\pi^{\frac{s}{2}}\rho
\end{array}\right),1,\chi_{2},z)=\chi_{2}(-\Delta)q^{-2s\cdot z_{2}+l}q^{s\cdot(z_{1}+z_{2})}$

~

~

\textbf{\uline{7.}}

$0<m<\frac{s}{2}$

$L(\left(\begin{array}{cc}
\pi^{m} & 1\\
1 & 0
\end{array}\right),1,\chi_{2},z)=$

$=\frac{\chi_{2}(-1)}{q^{-1}+1}\frac{q^{m-2sz_{2}}(1-q^{-1})}{q^{2z_{2}}-q^{2z_{1}}}\underset{\sigma}{\sum}\sigma(q^{\langle(m,s-m+1),2z\rangle})$

~

~

\textbf{\uline{~8.}}

$\,0<m<\frac{s}{2}+1$

$L(\left(\begin{array}{cc}
\pi^{m} & \varpi\\
\overline{\varpi} & 0
\end{array}\right),1,\chi_{2},z)=$

$=\frac{\chi_{2}(-1)q^{-\frac{1}{2}}}{q^{-1}+1}\frac{q^{m-2sz_{2}}}{q^{2z_{2}}-q^{2z_{1}}}[\underset{\sigma}{\sum}\sigma(q^{\langle(m,2+s-m),2z\rangle}-q^{\langle(s+1-m,m+1),2z\rangle-1})]$

~

~

\textbf{\uline{9.}}

$0<m<\frac{s}{2}$

$L(\left(\begin{array}{cc}
\pi^{m} & 1\\
1 & -\pi^{s-m}\rho
\end{array}\right),1,\chi_{2},z)=$

$\frac{\chi_{2}(-\Delta)q^{m-2sz_{2}}}{q^{-1}+1}[\underset{\sigma}{\sum}\sigma(\frac{q^{\langle(m,1-m+s),2z\rangle}}{q^{2z_{2}}-q^{2z_{1}}})]$

~

~

~

\textbf{\uline{10.}}

$0<m<\frac{s}{2}+1$

$L(\left(\begin{array}{cc}
\pi^{m} & \varpi\\
\overline{\varpi} & -\pi^{s-m}\rho
\end{array}\right),1,\chi_{2},z)=$

$=\frac{\chi_{2}(-\Delta)q^{m-\frac{1}{2}-s2z_{2}}}{q^{-1}+1}[\underset{\sigma}{\sum}\sigma(\frac{q^{\langle(m,2+s-m),2z\rangle}}{q^{2z_{2}}-q^{2z_{1}}})]$

~

~

\textbf{\textit{\large RU}}{\large \par}

~~~~~~

~1,2,3 are the same as the RP case.

~

\textbf{\uline{4.}}

$L(\left(\begin{array}{cc}
 & 1\\
1
\end{array}\right),1,\chi_{2},z)=q\chi_{2}(-1)(1-q^{-1})q^{-2sz_{2}}[\frac{q^{(s+1)(z_{1}+z_{2})}}{q^{2z_{2}}-q^{2z_{1}}}]$

~

\textbf{\uline{5.}}

$L(\left(\begin{array}{cc}
 & \varpi\\
\overline{\varpi}
\end{array}\right),1,\chi_{2},z)=\frac{\chi_{2}(-1)(1-q^{-1})q^{l+\frac{1}{2}}q^{-2s\cdot z_{2}}}{1+q^{-1}}q^{(s+1)(z_{1}+z_{2})}[\frac{q^{2z_{1}}+q^{2z_{2}}}{q^{2z_{2}}-q^{2z_{1}}}]$

~

\textbf{\uline{6.}}

~

~$L(x,\left(\begin{array}{cc}
\pi^{\frac{s+1}{2}} & \varpi\\
\overline{\varpi} & -\pi^{\frac{s+1}{2}}\rho
\end{array}\right),\chi_{2},z)=\chi_{2}(-\Delta)q^{-2s\cdot z_{2}+\frac{s}{2}}[q^{(s+1)(z_{1}+z_{2})}]$

~

For the following representatives:

\textbf{\uline{7.}}$\left(\begin{array}{cc}
\pi^{m} & 1\\
1 & 0
\end{array}\right)$,~\textbf{\uline{8.}}~$\left(\begin{array}{cc}
\pi^{m} & \varpi\\
\overline{\varpi} & 0
\end{array}\right)$,~\textbf{\uline{9.}}$\left(\begin{array}{cc}
\pi^{m} & 1\\
1 & -\pi^{s-m}\rho
\end{array}\right)$ \textbf{\uline{10.}} $\left(\begin{array}{cc}
\pi^{m} & \varpi\\
\overline{\varpi} & -\pi^{s-m}\rho
\end{array}\right)$

the result is the same as RP Case..

~

And any $a\in E^{*}$,~~
\begin{equation}
L(ax,\chi,s_{1},s_{2})=|a|^{s_{1}+2s_{2}}\chi_{1}(a)\cdot\chi_{2}(a)^{2}L(x,\chi_{1},\chi_{2},s_{1},s_{2})\label{equation 6.1}
\end{equation}

~

~

So one can calculate the spherical function for any $K$ orbit in
$X$.

~

~

~

The proof of the theorem will be given in chapters 7 and 8.

~

~

\subsection{Functional Equations}

~

A corollary that follows from Theorem \eqref{main theorem} is the
following functional equations:

~

Define $\tilde{L}(x,\chi_{1},\chi_{2},z)=q^{2sz_{2}}L(x,\chi_{1},\chi_{2},z)$

~

Then :

~

$\tilde{L}(x,1,\chi_{2},z_{2},z_{1})=-\chi^{*}(-1)\tilde{L}(x,1,\chi^{*}\chi_{2},z_{1},z_{2})$

~

~

$\tilde{L}(x,\chi^{*},\chi_{2},z_{2},z_{1})=q^{4s(z_{1}-z_{2})}\frac{q^{2z_{1}}-q^{2z_{2}-1}}{q^{2z_{2}}-q^{2z_{1}-1}}\tilde{L}(x,\chi^{*},\chi_{2},z_{1},z_{2})$

~

And actually if we define:

~

$\chi_{1}=\omega_{1}^{-1}\omega_{2}$

$\chi_{2}=\omega_{2}^{-1}$

~

~

We have the following functional equations :

~

If $\omega_{1}\omega_{2}^{-1}=1$ then :

~

$\tilde{L}(x,\omega_{2},\omega_{1},z_{2},z_{1})=-\chi^{*}(-1)\tilde{L}(x,\chi^{*}\omega_{1},\chi^{*}\omega_{2},z_{1},z_{2})$ 

~

If $\omega_{1}\omega_{2}^{-1}\neq1$ then:

$\tilde{L}(x,\omega_{2},\omega_{1},z_{2},z_{1})=q^{4s(z_{1}-z_{2})}\frac{q^{2z_{1}}-q^{2z_{2}-1}}{q^{2z_{2}}-q^{2z_{1}-1}}\tilde{L}(x,\chi^{*}\omega_{1},\chi^{*}\omega_{2},z_{1},z_{2})$

~
\begin{rem}
The previous transformation of characters comes form the fact that
if we define $\theta_{1}(a)=w_{1}(a)|a|^{z_{1}}$ and $\theta_{2}(a)=w_{2}(a)|a|^{z_{2}}$
then for $p=\left(\begin{array}{cc}
a & x\\
 & d
\end{array}\right)$ we have $d(p\cdot x)=\chi_{1}(d_{1}(p\cdot x))\cdot|d_{1}(p\cdot x)|^{s_{1}}\cdot\chi_{2}(d_{2}(p\cdot x))\cdot|d_{2}(p\cdot x)|^{s_{2}}=\theta_{1}(a)\cdot\theta_{2}(d)\cdot\delta_{P}^{\frac{1}{2}}(p)\cdot d(x)$
, where $\delta_{P}(p)$ is the modular character of the Borel subgroup,
this property is related to the principal series representation of
$GL_{2}(F)$. 
\end{rem}

\section{Calculation of $L(x,1,\chi_{2},z)$ :}

~

We will compute the integral over cosets of the Iwahori subgroup,
$B$

$B=\left(\begin{array}{cc}
a & b\\
c & d
\end{array}\right)\in K,\,\, b\in\varpi O_{F}$.

~

$B$ has the factorization:

~

$B=N_{-}\,\, A\,\, N^{+}(\varpi O_{F})=\left(\begin{array}{cc}
1 & 0\\
t & 1
\end{array}\right)\left(\begin{array}{cc}
a_{1} & 0\\
0 & a_{2}
\end{array}\right)\left(\begin{array}{cc}
1 & y\\
0 & 1
\end{array}\right)=\left(\begin{array}{cc}
a_{1} & a_{1}y\\
a_{1}t\,\,\, & a_{1}ty+a_{2}
\end{array}\right)$,

~

Where $a_{1},\, a_{2}\in O_{F}^{*},\,\, t\in O_{F},\,\, y\in\varpi O_{F}$.

~

The Haar measure on $B$ , $\mu_{B}$ is taken to be $dt\times da_{1}\times da_{2}\times dy$
, where:

~

$dt-$the Haar measure on $O_{F}$ normalized to be 1, $da_{1},da_{2}$-
the Haar measure of $O_{F}^{*}$ normalized to be 1 and $dy$- the
Haar measure on $\varpi O_{F}$ normalized to be $q^{-1}$.

~

Overall we have $\mu_{B}(B)=q^{-1}$.

~

Here is a list of coset representatives for $B\backslash K$ :

~$b_{i}=\left(\begin{array}{cc}
1 & r_{i}\\
0 & 1
\end{array}\right)$ if $0\leq i\leq q$ ~~and $r_{i}$ are representative of $\bar{F}$,
$q=|\bar{F}|$ 

and ~~$b_{q+1}=\left(\begin{array}{cc}
 & 1\\
1
\end{array}\right).$

~

So:

$Bb_{i}=\left(\begin{array}{cc}
a_{1} & a_{1}y+a_{1}r_{i}\\
a_{1}t\,\,\, & a_{1}t(y+r_{i})+a_{2}
\end{array}\right)$~if $1\leq i\leq q$ and $Bb_{q+1}=\left(\begin{array}{cc}
a_{1}y & a_{1}\\
a_{1}ty+a_{2}\,\, & a_{1}t
\end{array}\right)$.

~

Instead of integrating on $K$, we integrated over the different cosets
of $B$ using the Haar measure of $B$.

~

We want to normalize the Haar measure to be $\mu_{G}(K)=1$ ,so we
normalize the measure on each coset by multiplying by $\frac{q}{q+1}$.

~

Let $A=\left(\begin{array}{cc}
a & c\\
\bar{c} & b
\end{array}\right)$ be a fixed Hermitian matrix and $C=k\cdot A\cdot\overline{k^{t}}$
then using the factorization on $k$ one can parametrize $C$ as follows: 

\begin{equation}
d_{1}(C)=\begin{cases}
\begin{array}{cc}
N(a_{1})\cdot[a+Tr(\bar{c}(y+r_{i}))+bN(y+r_{i})]\,\,\,\,\, & k\in Bb_{i}\,\, i<q+1\\
N(a_{1})\cdot[aN(y)+Tr(\bar{c}y)+b] & k\in Bb_{q+1}
\end{array}\end{cases}\label{eq:7.1}
\end{equation}

~

In the first step of the proof we show that we can actually integrate
over $K$ instead of $K'$, from now on we will assume that the characters
of $F^{*}$are defined on $0$, say:$\chi_{i}(0)=0.$ 
\begin{lem}
Let $A\in X$ be a fixed Hermitian matrix. Then $\mu_{K}$ measure
of the set $\{k\in K|\,\, d_{1}(k\cdot A)=0\}$ is $0$ . \end{lem}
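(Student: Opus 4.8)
We must show that the set of $k \in K$ for which $d_1(k \cdot A) = 0$ has $\mu_K$-measure zero. Recall from \eqref{eq:7.1} that, under the Iwahori decomposition $K = \bigsqcup_{i} B b_i$, the quantity $d_1(C)$ with $C = k \cdot A$ is a polynomial expression in the coset parameters; on the coset $Bb_i$ for $i < q+1$ it equals $N(a_1)\cdot\bigl[a + Tr(\bar c (y+r_i)) + b\,N(y+r_i)\bigr]$, and on $Bb_{q+1}$ it equals $N(a_1)\cdot\bigl[a\,N(y) + Tr(\bar c y) + b\bigr]$. Since $a_1 \in O_F^*$ we have $N(a_1) \neq 0$ always, so the vanishing of $d_1$ is controlled entirely by the bracketed factor, which depends only on the variable $y \in \varpi O_F$ (the parameters $t, a_1, a_2$ do not enter it).

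\textbf{The plan.} The strategy is to reduce, coset by coset, to showing that a single scalar function of the one variable $y$ vanishes only on a measure-zero set. First I would partition $K$ into the finitely many Iwahori cosets $Bb_i$, $0 \le i \le q+1$, and use the product structure of $\mu_B$ (namely $dt \times da_1 \times da_2 \times dy$) together with the fact that $K$ is a finite disjoint union of such cosets up to normalization. Because the bracketed factor is independent of $t, a_1, a_2$, Fubini reduces the measure of $\{k \in Bb_i : d_1(k\cdot A) = 0\}$ to $\mu_B(B)/q^{-1}$ times the $dy$-measure of the zero set of the function $g_i(y) = a + Tr(\bar c(y+r_i)) + b\,N(y+r_i)$ (or the analogous $g_{q+1}$ on the last coset). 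So it suffices to prove that each $g_i$ vanishes on a set of $dy$-measure zero in $\varpi O_F$.

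\textbf{The key step.} The function $g_i(y)$ is a (conjugate-)polynomial of degree at most $2$ in $y$; explicitly $g_i(y) = b\,N(y) + \bigl(\text{linear Hermitian term in } y\bigr) + (\text{constant})$, where $N(y) = y\bar y$. The crucial observation is that $A \in X$ means $A$ is \emph{invertible}, so $A$ cannot be the zero matrix and in fact its entries cannot all conspire to make $g_i$ identically zero. I would argue as follows: if $b \neq 0$ then $g_i$ is a genuinely nonconstant function of $y$, and its zero locus in the two-real-dimensional local field $F$ is a proper closed analytic subvariety, hence has Haar measure zero; if $b = 0$ then $g_i(y) = a + Tr(\bar c(y+r_i))$ is an affine (additive) function of $y$, and it is nonconstant precisely when $c \neq 0$, in which case $Tr(\bar c \,\cdot)$ is a nonzero $E$-linear functional whose kernel is a proper $O_E$-submodule of measure zero. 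The only remaining possibility is $b = 0$ and $c = 0$ simultaneously, but then $A$ has a zero column (after accounting for the last coset separately, a zero row), contradicting $\det A \neq 0$; so this degenerate case never occurs.

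\textbf{The main obstacle.} The genuinely delicate point is the quadratic case $b \neq 0$: one must verify that the level set $\{y : b\,N(y) + Tr(\bar c y) + (\text{const}) = 0\}$ really is measure-zero rather than, say, an open set, since $N$ and $Tr$ are not polynomials over a single field but over the quadratic extension. The clean way to handle this is to write $y = u + \varpi v$ in a chosen $O_E$-basis $\{1, \varpi\}$ of $O_F$ and observe that the defining equation becomes a single nonzero polynomial equation with coefficients in $E$ in the two $E$-variables giving the coordinates of $y$; a nonzero polynomial over a local field of characteristic zero has zero set of Haar measure zero (its complement is dense and open). Care is needed to confirm the polynomial is not identically zero — this is exactly where invertibility of $A$ is used — but once that is established the measure-zero conclusion is standard. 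Summing the finitely many measure-zero contributions over the cosets $b_0, \dots, b_{q+1}$ then gives $\mu_K\{k : d_1(k\cdot A) = 0\} = 0$, which is the claim; this in turn justifies replacing the integral over $K'$ by an integral over all of $K$ in the definition of $L$.
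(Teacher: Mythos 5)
Your proof is correct, but the key mechanism differs from the paper's. The paper's own proof is much terser: citing \eqref{eq:7.1}, it reduces the claim to the statement that the fibers (inverse images of single points) of the trace map $Tr:O_{F}\to Tr(O_{F})$ and of the norm map $N:O_{F}^{*}\to N(O_{F}^{*})$ have measure zero, and deduces this from Lemma \ref{pushforword lemma}: both maps are surjective continuous homomorphisms onto open subgroups, so the push-forward of $\mu_{F}$ is a multiple of the Haar measure of the image, and a singleton has measure zero, hence so does its preimage. (Implicit in that reduction are exactly the completion-of-square identities of the form $bN(w)+Tr(\bar{c}w)+a=bN(w+c/b)+a-N(c)/b$ used throughout Chapter 7, together with the same case analysis on $b$ and $c$ that you spell out; the paper leaves both to the reader.) Your route replaces this group-theoretic argument by a linearization: choosing an $O_{E}$-basis $\{1,\varpi\}$ of $O_{F}$, the zero set of $g_{i}$ becomes the zero locus of a nonzero polynomial in two $E$-variables, which has Haar measure zero. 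Both are valid; the paper's argument recycles Lemma \ref{pushforword lemma} and the norm/trace identities already in play elsewhere, while yours is more elementary and self-contained, and your case analysis ($b\neq0$ quadratic; $b=0$, $c\neq0$ affine; $b=c=0$ impossible since then $\det A=ab-N(c)=0$) is in fact more explicit than what the paper writes down. One small caveat: your parenthetical justification of the polynomial fact --- ``its complement is dense and open'' --- is not by itself a proof, since a set can have dense open complement and still have positive measure; the standard argument is induction on the number of variables via Fubini, the one-variable case being finiteness of roots. As you invoke the fact only as a known standard result, this does not affect the correctness of your proof.
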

\begin{proof}
By \eqref{eq:7.1}, it is sufficient to show that the inverse image
of a point of the trace and norm maps is of measure zero. Note that
the trace and norm maps: $Tr:O_{F}\to Tr(O_{F})$,~~$N:\, O_{F}^{*}\to N(O_{F}^{*})$
are surjective homomorphisms, and that the Haar measure of $Tr(O_{F}),N(O_{F}^{*})$
(being an open sets) is the induced measure of $O_{E},O_{E}^{*}$(respectively).
In particular the measure of a singleton is $0$. We use Lemma \ref{pushforword lemma}
to deduce that the measure of the inverse images is also $0$. 
\end{proof}
~

~
\begin{lem}
\label{formula lemma}

\[
L(x,\chi_{1},\chi_{2},s_{1},s_{2})=\underset{}{\underset{K}{\int}\chi_{1}(d_{1}(k\cdot x))\cdot|d_{1}(k\cdot x)|^{s_{1}}\chi_{2}(d_{2}(k\cdot x))\cdot|d_{2}(k\cdot x)|^{s_{2}}\, dk=}
\]

\[
\frac{q\chi_{2}(det(x))|det(x)|^{s_{2}}}{q+1}\underset{O_{F}}{[\int}|a+Tr(\bar{c}t)+bN(t)|^{s_{1}}\chi_{1}(a+Tr(\bar{c}t)+bN(t))dt+
\]

\end{lem}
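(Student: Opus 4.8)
The plan is to convert the integral over $K$ into a sum over the $B\backslash K$ coset representatives $b_i$, using the fact (Lemma~\ref{pushforword lemma}) that the push-forward of $\mu_B$ to each coset is a Haar measure, together with the normalization factor $\frac{q}{q+1}$ introduced to restore $\mu_G(K)=1$. First I would write $L(x,\chi_1,\chi_2,s_1,s_2)=\frac{q}{q+1}\sum_{i=0}^{q+1}\int_B \prod_{j=1}^2 \chi_j(d_j(Bb_i\cdot x))|d_j(Bb_i\cdot x)|^{s_j}\,d\mu_B$, and then use the explicit parametrization of $d_1(C)$ from~\eqref{eq:7.1} for each coset. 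The key observation is that $d_2(k\cdot x)=\det(k\cdot x)=N(\det k)\det(x)$, and since $\det k\in O_F^*$ we have $|d_2(k\cdot x)|=|\det(x)|$ and $\chi_2(d_2(k\cdot x))=\chi_2(\det x)$ (here $\chi_2$ is a character of $E^*/N(F^*)$, so it is insensitive to the $N(\det k)$ factor). Thus the entire $\chi_2$-and-$s_2$ contribution factors out as the constant $\chi_2(\det x)|\det x|^{s_2}$.

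**Executing the Iwahori integration.** With the $d_2$ part pulled out, I would integrate over the four coordinates $a_1,a_2,t,y$ of $B$. From the factorization $B=N_-\,A\,N^+$, the variable $a_1\in O_F^*$ enters $d_1(C)$ only through the factor $N(a_1)$; since $\chi_1$ is trivial on $N(F^*)$ and $|N(a_1)|=1$, the $a_1$-integral contributes only $\int_{O_F^*}da_1=1$ and leaves the bracketed expression $a+Tr(\bar c(y+r_i))+bN(y+r_i)$ unchanged. The variable $a_2$ does not appear in $d_1(C)$ at all, so its integral also gives $1$. The variable $t\in O_F$ likewise does not appear in $d_1$. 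This is the decisive simplification: after these integrations the integrand depends only on $y\in\varpi O_F$ and on the coset index $i$.

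**Combining the cosets via a change of variable.** For the cosets $b_i$ with $i\le q$, the dependence on $y$ and $r_i$ is through the single combination $y+r_i$. Here the main idea is that as $i$ ranges over representatives of $\bar F$ and $y$ ranges over $\varpi O_F$, the element $t:=y+r_i$ sweeps out all of $O_F$ exactly once, so that $\sum_{i=0}^{q}\int_{\varpi O_F}(\cdots)\,dy=\int_{O_F}(\cdots)\,dt$ after accounting for the measure normalization $\mu_F(\varpi O_F)=q^{-1}$. This collapses the first $q+1$ cosets into the single integral $\int_{O_F}|a+Tr(\bar c t)+bN(t)|^{s_1}\chi_1(a+Tr(\bar c t)+bN(t))\,dt$. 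The final coset $b_{q+1}=\left(\begin{smallmatrix}&1\\1&\end{smallmatrix}\right)$ produces the companion term with $d_1=N(a_1)[aN(y)+Tr(\bar c y)+b]$, and the displayed formula in Lemma~\ref{formula lemma} is obtained by assembling these pieces with the prefactor $\frac{q\chi_2(\det x)|\det x|^{s_2}}{q+1}$.

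**Main obstacle.** The routine part is the factoring-out of the $a_1,a_2,t$ integrations; the step requiring genuine care is the change of variable $t=y+r_i$ that merges the $q+1$ cosets into one integral over $O_F$, since one must verify that the union $\bigcup_i (r_i+\varpi O_F)$ tiles $O_F$ without overlap and that the measures match after the $\frac{q}{q+1}$ normalization is combined with $\mu_F(\varpi O_F)=q^{-1}$. I expect the bookkeeping of the measure constants across the coset-collapse to be where errors are most likely, and I would check it by testing against the total-mass condition $\int_K dk=1$.
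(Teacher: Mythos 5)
Your proposal is correct and follows essentially the same route as the paper: decompose $\int_K$ into the $q+1$ Iwahori cosets with the $\tfrac{q}{q+1}$ normalization, factor out the $d_2$ contribution as $\chi_2(\det x)|\det x|^{s_2}$ (using $\det(kxk^*)=N(\det k)\det x$), kill the $a_1,a_2,t$ dependence via $\chi_1(N(a_1))=1$ and $|N(a_1)|=1$, and merge the unipotent cosets through the tiling $O_F=\bigsqcup_i (r_i+\varpi O_F)$, which is exactly the paper's argument. The only blemish is an off-by-one in wording (it is the $q$ cosets indexed by representatives of $\bar F$, not $q+1$, that collapse into $\int_{O_F}$, the remaining coset $b_{q+1}$ supplying the second integral) --- a slip inherited from the paper's own inconsistent indexing and harmless to the argument.
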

\[
\underset{\varpi O_{F}}{\int}|aN(y)+Tr(\overline{c}y)+b|^{s_{1}}\chi_{1}(aN(y)+Tr(\overline{c}y)+b)dy].
\]

~

~
\begin{proof}
For any $f\in L^{1}(X)$: 
\[
\underset{K}{\int}f(k\cdot x)\, dk=\frac{q}{q+1}\underset{i=1}{\overset{q+1}{\sum}}\underset{k=bb_{i}\in Bb{}_{i}}{\int}f(k\cdot x)\, db
\]

Using \eqref{eq:7.1} and the Haar measure of $B$, We have:{\small 
\begin{eqnarray*}
L(x,\chi_{1},\chi_{2},s_{1},s_{2})= & \frac{q\chi_{2}(det(x))|det(x)|^{s_{2}}}{q+1}[\underset{i=1}{\overset{q}{\sum}}\underset{y\in\varpi O_{F}}{\int}\chi_{1}(a+Tr(\bar{c}(y+r_{i}))+bN(y+r_{i}))\cdot|a+Tr(\bar{c}(y+r_{i}))+bN(y+r_{i})|^{s_{1}}dy
\end{eqnarray*}
}{\small \par}

{\small 
\[
+\underset{y\in\varpi O_{F}}{\int}\chi_{1}(aN(y)+Tr(\bar{c}y)+b)\cdot|aN(y)+Tr(\bar{c}y)+b|^{s_{1}}dy].
\]
}{\small \par}

The last equation follows from the facts that $|N(a_{1})|=1$, $\chi_{1}(N(a_{1}))=1$
and that $|det(k\cdot x)|=|det(x)|$.

Notice that performing the first sum is equivalent to integrating
over $O_{F}$ . Altogether we obtain our formula .
\end{proof}

\subsection{Calculation $L(x,1,\chi_{2},z)$ on the diagonal elements}

~

We use Lemma \ref{formula lemma} to calculate the various cases.

~

On the diagonal representatives:

~

$x\sim\left(\begin{array}{cc}
\pi^{\lambda_{1}}\epsilon_{1}\\
 & \pi^{\lambda_{2}}\epsilon_{2}
\end{array}\right)$

~

~

~

\[
L(x,1,\chi_{2},s_{1},s_{2})=
\]

\[
\frac{q^{(\lambda_{1}+\lambda_{2})-2s_{2}}\chi_{2}(\epsilon_{1}\epsilon_{2})}{q^{-1}+1}[\underset{O_{F}}{\int}|\pi^{\lambda_{1}}\epsilon_{1}+\pi^{\lambda_{2}}\epsilon_{2}N(t)|^{s_{1}}dt+\underset{\varpi O_{F}}{\int}|\epsilon_{2}\pi^{\lambda_{2}}+\epsilon_{1}\pi^{\lambda_{1}}N(y)|^{s_{1}}dy]=
\]

~

\selectlanguage{american}%
\[
\underset{}{\frac{q^{(\lambda_{1}+\lambda_{2})-2s_{2}}\chi_{2}(\epsilon_{1}\epsilon_{2})\cdot q^{-\lambda_{2}2s_{1}}}{q^{-1}+1}[\underset{O_{F}}{\int}}|\pi^{\lambda_{1}-\lambda_{2}}\frac{\epsilon_{1}}{\epsilon_{2}}+N(t)|^{s_{1}}dt+\underset{\varpi O_{F}}{\int}|1+\frac{\epsilon_{1}}{\epsilon_{2}}\pi^{\lambda_{1}-\lambda_{2}}N(y)|^{s_{1}}dy].
\]

\selectlanguage{english}%
~

The second integrand is the constant $1$ since $|N(y)|<1$ if $y\in\varpi O_{F}$.

Note that for any $t\in O_{F}-\varpi^{\lambda_{1}-\lambda_{2}}O_{F}$,
we have $|\pi^{\lambda_{1}-\lambda_{2}}\frac{\epsilon_{1}}{\epsilon_{2}}+N(t)|^{s_{1}}=|t|^{2s_{1}}$
and so we can calculate this integral easily on this space:

\selectlanguage{american}%
~

\selectlanguage{english}%
~

\selectlanguage{american}%
$\underset{}{L(x,1,\chi_{2},s_{1},s_{2})=\frac{q^{(\lambda_{1}+\lambda_{2})-2s_{2}}\cdot q^{-\lambda_{2}2s_{1}}\chi_{2}(\epsilon_{1}\epsilon_{2})}{q^{-1}+1}[\underset{}{\overset{\lambda_{1}-\lambda_{2}-1}{\underset{j=0}{\sum}}q^{(-2s_{1}-1)j}(1-q^{-1})+}}$

~

\[
q^{(\lambda_{1}-\lambda_{2})(-2s_{1})}\underset{\varpi^{\lambda_{1}-\lambda_{2}}O_{F}}{\int}|\frac{\epsilon_{1}}{\epsilon_{2}}+N(t)|^{s_{1}}dt+q^{-1}].
\]

~

\selectlanguage{english}%
After the substitution $\varpi^{\lambda_{1}-\lambda_{2}}y=t$, we
get:

~

\selectlanguage{american}%
\[
\underset{}{L(x,1,\chi_{2},s_{1},s_{2})=\frac{q^{(\lambda_{1}+\lambda_{2})-2s_{2}}\cdot q^{-\lambda_{2}2s_{1}}\chi_{2}(\epsilon_{1}\epsilon_{2})}{q^{-1}+1}[(1-q^{-1})\frac{1-q^{(-2s_{1}-1)(\lambda_{1}-\lambda_{2})}}{1-q^{(-2s_{1}-1)}}+}
\]

\[
+q^{(\lambda_{1}-\lambda_{2})(-2s_{1}-1)}\underset{O_{F}}{\int}|\frac{\epsilon_{1}}{\epsilon_{2}}+N(y)|^{s_{1}}dy+\frac{1}{q}].
\]

~

\[
L(x,1,\chi_{2},z)=\frac{q^{\frac{\lambda_{2}-\lambda_{1}}{2}}\cdot q^{2\lambda_{1}z_{2}+2\lambda_{2}z_{1}}\chi_{2}(\epsilon_{1}\epsilon_{2})}{q^{-1}+1}[\frac{q^{2z_{2}}-q^{2z_{2}-1}-q^{(2z_{1}-2z_{2})(\lambda_{1}-\lambda_{2})+2z_{2}}+q^{(2z_{1}-2z_{2})(\lambda_{1}-\lambda_{2})+2z_{2}-1}}{q^{2z_{2}}-q^{2z_{1}}}
\]

~

\begin{equation}
+q^{(\lambda_{1}-\lambda_{2})(2z_{1}-2z_{2})}\underset{O_{F}}{\int}|\frac{\epsilon_{1}}{\epsilon_{2}}+N(t)|^{s_{1}}dt+q^{-1}].\label{eq:7.2}
\end{equation}

~

\selectlanguage{english}%
There are two cases:

~

\textbf{\emph{\large Case 1:}} $-\frac{\epsilon_{1}}{\epsilon_{2}}\notin N(F):$
We have by Lemma \ref{Integral lemma} : 
\[
\underset{O_{F}}{\int}|\frac{\epsilon_{1}}{\epsilon_{2}}+N(t)|dt=\frac{q^{2z_{2}}-q^{2z_{2}-1}+q^{(2z_{1}-2z_{2})s+2z_{2}-1}-q^{s(2z_{1}-2z_{2})+2z_{1}}}{q^{2z_{2}}-q^{2z_{1}}}.
\]
 We set this into Eq \ref{eq:7.2}:\foreignlanguage{american}{
\[
L(x,1,\chi_{2},z)=\underset{}{\frac{q^{\frac{\lambda_{2}-\lambda_{1}}{2}}\chi_{2}(\epsilon_{1}\epsilon_{2})q^{-2sz_{2}}}{q^{-1}+1}[\frac{q^{2z_{2}+2(\lambda_{1}+s)z_{2}+2\lambda_{2}z_{1}}}{q^{2z_{2}}-q^{2z_{1}}}}+
\]
}

\selectlanguage{american}%
~

\[
+\frac{q^{2z_{1}(s+\lambda_{1})+2z_{2}\lambda_{2}+2z_{2}-1}-q^{2z_{1}(s+\lambda_{1})+2z_{2}\lambda_{2}+2z_{1}}}{q^{2z_{2}}-q^{2z_{1}}}+\frac{-q^{2z_{1}-1+2(\lambda_{1}+s)z_{2}+2\lambda_{2}z_{1}}}{q^{2z_{2}}-q^{2z_{1}}}=
\]

~

\[
\frac{q^{\frac{\lambda_{2}-\lambda_{1}}{2}}\chi_{2}(\epsilon_{1}\epsilon_{2})q^{-2sz_{2}}}{q^{-1}+1}[q^{\langle(s+\lambda_{1},\lambda_{2}),2z\rangle}(q^{2z_{2}-1}-q^{2z_{1}})+q^{\langle(\lambda_{2},\lambda_{1}+s),2z\rangle}(q^{2z_{2}}-q^{2z_{1}-1}).
\]

~

\selectlanguage{english}%
\textbf{\emph{\large Case 2:}}$-\frac{\epsilon_{1}}{\epsilon_{2}}\in N(F)$:
We have 
\[
\underset{O_{F}}{\int}|\frac{\epsilon_{1}}{\epsilon_{2}}+N(t)|^{s_{1}}dt=\frac{q^{2z_{2}}-q^{2z_{2}-1}-q^{s(2z_{1}-2z_{2})+2z_{2}-1}+q^{(2z_{1}-2z_{2})s+2z_{1}}}{q^{2z_{2}}-q^{2z_{1}}}.
\]

Setting this into Eq \ref{eq:7.2}:

\[
\underset{}{L(x,1,\chi_{2},z)=\underset{}{\frac{q^{\frac{\lambda_{2}-\lambda_{1}}{2}}\chi_{2}(\epsilon_{1}\epsilon_{2})}{q^{-1}+1}[\frac{q^{2z_{2}+2(\lambda_{1}+s)z_{2}+2\lambda_{2}z_{1}}}{q^{2z_{2}}-q^{2z_{1}}}}+}
\]

~

\[
\frac{+q^{2z_{1}(s+\lambda_{1})+(\lambda_{2}-s)2z_{2}+2z_{1}}-q^{2z_{1}(s+\lambda_{1})+(\lambda_{2}-s)2z_{2}+2z_{2}-1}}{q^{2z_{2}}-q^{2z_{1}}}+\frac{-q^{2z_{1}-1+2(\lambda_{1}+s)z_{2}+2\lambda_{2}z_{1}}}{q^{2z_{2}}-q^{2z_{1}}}]=
\]

~

\[
\frac{q^{\frac{\lambda_{2}-\lambda_{1}}{2}}\chi_{2}(\epsilon_{1}\epsilon_{2})q^{-2sz_{2}}}{(q^{-1}+1)(q^{2z_{2}}-q^{2z_{1}})}[q^{\langle(\lambda_{1}+s,\lambda_{2}),2z\rangle}(q^{2z_{1}}-q^{2z_{2}-1})+q^{\langle(\lambda_{2},\lambda_{1}+s),2z\rangle}(q^{2z_{2}}-q^{2z_{1}-1})].
\]

~

\subsection{Calculating $L(x,1,\chi_{2},z)$ on the non diagonal elements:}

~

Let $x=\left(\begin{array}{cc}
a & \varpi^{c}\\
\overline{\varpi}^{c} & b
\end{array}\right)$. Using Lemma \ref{formula lemma} :

~

~

\[
L(x,1,\chi_{2},s_{1},s_{2})=\frac{q\chi_{2}(det(x))|det(x)|^{s_{2}}}{q+1}\underset{O_{F}}{[\int}|a+Tr(\overline{\varpi}^{c}t)+bN(t)|^{s_{1}}dt+
\]

\[
\underset{\varpi O_{F}}{\int}|aN(y)+Tr(\overline{\varpi}^{c}y)+b|^{s_{1}}dy].
\]

~

~

In the following section we will make use of Lemma \ref{pushforword lemma}
and \ref{sub:The-Trace} (coordinate substitution).

~

\subsubsection{RP case}

~

Recall :~~$l=\frac{s}{2}$,~ $Tr(\varpi^{2i}O_{F})=\pi^{l+i}O_{E},\,\,\,$$Tr(\varpi^{2i-1}O_{F})=\pi^{l+i}O_{E}.$

~

\textbf{\uline{1.}}

$x\sim\left(\begin{array}{cc}
 & 1\\
1
\end{array}\right)$

~

\[
L(x,1,\chi_{2},s_{1},s_{2})=\frac{\chi_{2}(-1)}{q^{-1}+1}[\underset{x\in O_{F}}{\int}|Tr(t)|^{s_{1}}dt+\underset{}{\underset{y\in\varpi O_{F}}{\int}|Tr(y)|^{s_{1}}dy}].
\]

~

We substitute $u=Tr(t)$,~~$v=Tr(y)$ 

~

\[
L(x,1,\chi_{2},s_{1},s_{2})=\frac{q^{l}\chi_{2}(-1)}{q^{-1}+1}[\underset{u\in\pi^{l}O_{E}}{\int}|u|^{s_{1}}du+\underset{}{\underset{v\in\pi^{l+1}O_{E}}{\int}|v|^{s_{1}}dv}].
\]

~

The integrals above are simple and reduces to geometric sums:

~

~

\[
L(x,1,\chi_{2},s_{1},s_{2})=\frac{\cdot\chi_{2}(-1)}{q^{-1}+1}[\overset{\infty}{\underset{i=0}{\sum}}q^{-i}(1-q^{-1})q^{(l+i)(-2s_{1})}+\overset{\infty}{\underset{i=1}{\sum}}q^{-i}(1-q^{-1})q^{(l+i)(-2s_{1})}]=
\]

~

\[
\frac{\chi_{2}(-1)(1-q^{-1})q^{-2ls_{1}}}{q^{-1}+1}[\overset{\infty}{\underset{i=0}{\sum}}q^{(-2s_{1}-1)i}+\overset{\infty}{\underset{i=1}{\sum}}q^{(-2s_{1}-1)i}]=
\]

~

\[
\frac{\chi_{2}(-1)(1-q^{-1})q^{-2ls_{1}}}{q^{-1}+1}[\frac{1}{1-q^{-2s_{1}-1}}+\frac{q^{-2s_{1}-1}}{1-q^{-2s_{1}-1}}].
\]

~

After simplifying this expression and performing the transformation
$s\mapsto z$, we obtain:

~

\[
L(x,1,\chi_{2},z)=\frac{\chi_{2}(-1)(1-q^{-1})q^{l-4lz_{2}}}{q^{-1}+1}[\frac{q^{s(z_{1}+z_{2})}(q^{2z_{1}}+q^{2z_{2}})}{q^{2z_{2}}-q^{2z_{1}}}].
\]

~

~

\textbf{\uline{2.}}

$x\sim\left(\begin{array}{cc}
 & \varpi\\
\overline{\varpi}
\end{array}\right)$

~

\[
L(x,1,\chi_{2},s_{1},s_{2})=\frac{\chi_{2}(-1)q^{-2s_{2}}}{q^{-1}+1}[\underset{x\in O_{F}}{\int}|Tr(\overline{\varpi}t)|^{s_{1}}dt+\underset{}{\underset{y\in\varpi O_{F}}{\int}|Tr(\overline{\varpi}y)|^{s_{1}}dy}].
\]

~

\[
\frac{\chi_{2}(-1)q^{-2s_{2}+1}}{q^{-1}+1}[\underset{u\in\varpi O_{F}}{\int}|Tr(u)|^{s_{1}}du+\underset{}{\underset{u\in\varpi^{2}O_{F}}{\int}|Tr(u)|^{s_{1}}du}].
\]

~

Similarly to the previous calculations, we substitute $v=Tr(t),\, y=Tr(u)$,
and use Lemma \ref{pushforword lemma}. Note that the image of both
transformations coincides $Im(\varpi O_{F})=Im(\varpi^{2}O_{F})=\pi^{l+1}O_{E}$,
but the multiplicative factors (The {}``Jacobians'') are different:

~

\[
L(x,1,\chi_{2},s_{1},s_{2})=\frac{\chi_{2}(-1)q^{-2s_{2}+1}}{q^{-1}+1}[q^{l}\underset{u\in\pi^{l+1}O_{E}}{\int}|v|^{s_{1}}dv+\underset{}{q^{l-1}\underset{u\in\pi^{l+1}O_{E}}{\int}|y|^{s_{1}}dy}]=
\]

~

\[
\frac{\chi_{2}(-1)q^{-2s_{2}+1}q^{l}}{q^{-1}+1}[\overset{\infty}{\underset{j=1}{\sum}}q^{-l-i}(1-q^{-1})q^{(l+i)(-2s_{1})}+q^{-1}\overset{\infty}{\underset{j=1}{\sum}}q^{-l-i}(1-q^{-1})q^{(l+i)(-2s_{1})}].
\]

~

Substituting $s\mapsto z$:

~

\[
L(x,1,\chi_{2},z)=\frac{\chi_{2}(-1)(1-q^{-1})q^{2z_{2}+\frac{1}{2}}}{q^{-1}+1}q^{l(2z_{1}-2z_{2}+1)}[\overset{\infty}{\underset{j=1}{\sum}}q^{i(2z_{1}-2z_{2})}+q^{-1}\overset{\infty}{\underset{j=1}{\sum}}q^{i(2z_{1}-2z_{2})}]=
\]

~

\[
\frac{\chi_{2}(-1)(1-q^{-1})q^{2z_{2}+\frac{1}{2}}}{q^{-1}+1}q^{l(2z_{1}-2z_{2}+1)}[\frac{q^{2z_{1}}}{q^{2z_{2}}-q^{2z_{1}}}+\frac{q^{2z_{1}-1}}{q^{2z_{2}}-q^{2z_{1}}}]=
\]

~

\[
\frac{\chi_{2}(-1)(1-q^{-1})q^{\frac{1}{2}}}{q^{-1}+1}q^{-2sz_{2}}q^{l(2z_{1}+2z_{2}+1)}[\frac{q^{2z_{2}+2z_{1}}}{q^{2z_{2}}-q^{2z_{1}}}+\frac{q^{2z_{2}+2z_{1}-1}}{q^{2z_{2}}-q^{2z_{1}}}].
\]

~

After simplifying this expression, we get:

~

\[
L(x,1,\chi_{2},z)=\chi_{2}(-1)(1-q^{-1})q^{l+\frac{1}{2}}q^{-2sz_{2}}q^{s(z_{1}+z_{2})}[\frac{q^{2z_{1}+2z_{2}}}{q^{2z_{2}}-q^{2z_{1}}}].
\]

~

~

\textbf{\uline{3.}}

$x\sim\left(\begin{array}{cc}
\pi^{\frac{s}{2}} & 1\\
1 & -\pi^{\frac{s}{2}}\rho
\end{array}\right),\,\,\,\,1+\pi^{s}\rho=\Delta\notin N(F^{*})$

~

\[
L(x,1,\chi_{2},s_{1},s_{2})=\frac{\chi_{2}(-\Delta)}{q^{-1}+1}\underset{O_{F}}{[\int}|\pi^{\frac{s}{2}}+Tr(t)-\pi^{\frac{s}{2}}\rho N(x)|^{s_{1}}dt+\underset{\varpi O}{\int}|\pi^{\frac{s}{2}}N(y)+Tr(y)-\pi^{\frac{s}{2}}\rho|^{s_{1}}dy].
\]

~

Because $|Tr(y)|,|\pi^{\frac{s}{2}}N(y)|<1$ ~(see Sections \ref{sub:The-Trace}
and \ref{sub:The-Norm}), we conclude that the second integrand is
constant in $\varpi O_{F}$.

~

\begin{equation}
L(x,1,\chi_{2},s_{1},s_{2})=\frac{\chi_{2}(-\Delta)}{q^{-1}+1}\underset{O_{F}^{*}}{[\int}|\pi^{\frac{s}{2}}+Tr(t)-\pi^{\frac{s}{2}}\rho N(t)|^{s_{1}}dt+q^{-s\cdot s_{1}-1}+q^{-s\cdot s_{1}-1}].\label{eq:7.3}
\end{equation}

~

Since $|N(t)|=1$ if $t\in O_{F}^{*}$ , and $N(t^{-1})Tr(t)=Tr(t^{-1})$,
one can verify :

$|\pi^{\frac{s}{2}}+Tr(t)-\pi^{\frac{s}{2}}\rho N(t)|^{s_{1}}=|\pi^{\frac{s}{2}}N(\frac{1}{t})+Tr(\frac{1}{t})-\pi^{\frac{s}{2}}\rho|^{s_{1}}$

~

Setting this into \eqref{eq:7.3}:

~

\[
L(x,1,\chi_{2},s_{1},s_{2})=\frac{\chi_{2}(-\Delta)}{q^{-1}+1}\underset{O_{F}^{*}}{[\int}|\pi^{\frac{s}{2}}N(\frac{1}{t})+Tr(\frac{1}{t})-\pi^{\frac{s}{2}}\rho|^{s_{1}}dt+q^{-s\cdot s_{1}-1}+q^{-s\cdot s_{1}-1}].
\]

~

We substitute $u=\frac{1}{t}$ 

~

\begin{equation}
L(x,1,\chi_{2},s_{1},s_{2})=\frac{\chi_{2}(-\Delta)}{q^{-1}+1}\underset{O_{F}^{*}}{[\int}|\pi^{\frac{s}{2}}N(u)+Tr(u)-\pi^{\frac{s}{2}}\rho|^{s_{1}}du+2q^{-s\cdot s_{1}-1}].\label{eq:7.4}
\end{equation}

We use the following identity:

~

$\pi^{\frac{s}{2}}N(u)+Tr(u)=(\varpi^{\frac{s}{2}}u+\bar{\varpi}^{-\frac{s}{2}})(\overline{\varpi}^{\frac{s}{2}}\overline{u}+\varpi^{-\frac{s}{2}})-\pi^{-\frac{s}{2}}=N(\varpi^{\frac{s}{2}}u+\bar{\varpi}^{-\frac{s}{2}})-\pi^{-\frac{s}{2}}$

~

Setting this identity into \eqref{eq:7.4}:

~

\[
L(x,1,\chi_{2},s_{1},s_{2})=\frac{q\chi_{2}(-\Delta)}{q+1}\underset{O_{F}^{*}}{[\int}|N(\varpi^{\frac{s}{2}}u+\bar{\varpi}^{-\frac{s}{2}})-\pi^{-\frac{s}{2}}-\pi^{\frac{s}{2}}\rho|^{s_{1}}du+2q^{-s\cdot s_{1}-1}]=
\]

~

\[
\frac{\chi_{2}(-\Delta)}{q^{-1}+1}\underset{O_{F}^{*}}{[\int}|\pi^{\frac{s}{2}}N(\varpi^{\frac{s}{2}}u+\bar{p}^{-\frac{s}{2}})-\pi^{-\frac{s}{2}}-\pi^{\frac{s}{2}}\rho|^{s_{1}}du+2q^{-s\cdot s_{1}-1}]=
\]

~

\[
\frac{\chi_{2}(-\Delta)}{q^{-1}+1}\underset{O_{F}^{*}}{[\int}q^{-s\cdot s_{2}}|\pi^{-s}[N(\varpi^{s}u+1)-(1+\pi^{s}\rho)]|^{s_{1}}du+2q^{-s\cdot s_{1}-1}].
\]

~

Since $1+\pi^{s}\rho$ is not a norm, we have $[N(\varpi^{s}u+1)-(1+\pi^{s}\rho)]=\pi^{s}\eta,\,\,\,\eta\in O_{E}^{*}$
(see Section \ref{sub:The-Norm} ), we have:

~

\[
L(x,1,\chi_{2},s_{1},s_{2})=\frac{\chi_{2}(-\Delta)}{q^{-1}+1}\underset{O_{F}^{*}}{[\int}q^{-s\cdot s_{2}}du+2q^{-s\cdot s_{1}-1}]=
\]

~

\[
\frac{\chi_{2}(-\Delta)}{q^{-1}+1}[q^{-s\cdot s_{1}}(1-q^{-1})+2q^{-s\cdot s_{1}-1}]=\frac{\chi_{2}(-\Delta)}{q^{-1}+1}[q^{-s\cdot s_{1}}+q^{-s\cdot s_{1}-1}].
\]

~

Substituting $s\mapsto z$:

~

\[
L(x,1,\chi_{2},z)=\chi_{2}(-\Delta)q^{-2s\cdot z_{2}+\frac{s}{2}}q^{s\cdot(z_{1}+z_{2})}.
\]

~

\textbf{\uline{4.\label{6.2.1 matrix 4}}}

~

$x=\left(\begin{array}{cc}
\pi^{m} & 1\\
1 & 0
\end{array}\right)$ ~~~$0<m<\frac{s}{2}$

~

\[
L(x,1,\chi_{2},s_{1},s_{2})=\frac{\chi_{2}(-1)}{q^{-1}+1}\underset{O_{F}}{[\int}|\pi^{m}+Tr(t)|^{s_{1}}dt+\underset{\varpi O}{\int}|\pi^{m}N(y)+Tr(y)|^{s_{1}}dy].
\]

~

We have that $|Tr(t)|<|\pi^{m}|$ (see Section \eqref{sub:The-Trace}),
and so the first integrand is constant: 

~
\begin{equation}
L(x,1,\chi_{2},s_{1},s_{2})=\frac{\chi_{2}(-1)}{q^{-1}+1}[q^{-2s_{1}m}+\underset{\varpi O}{\int}|\pi^{m}N(y)+Tr(y)|^{s_{1}}dy].\label{eq:7.5}
\end{equation}

~

We use the following identity:

~

$\pi^{m}N(y)+Tr(y)=(\varpi^{-m}+\overline{\varpi}^{m}y)(\bar{\varpi}^{-m}+\varpi^{m}\bar{y})-\pi^{-m}=N(\varpi^{-m}+\overline{\varpi}^{m}y)-\pi^{-m}.$

~

We set it into \eqref{eq:7.5}:

~

~

\[
L(x,1,\chi_{2},s_{1},s_{2})=\frac{\chi_{2}(-1)}{q^{-1}+1}[q^{-2s_{1}m}+q^{2ms_{1}}\underset{\varpi O_{F}}{\int}|N(1+\pi^{m}y)-1|^{s_{1}}dy].
\]

~

We substitute $u=\pi^{m}y$ :

~

\[
L(x,1,\chi_{2},s_{1},s_{2})=\frac{\chi_{2}(-1)}{q^{-1}+1}[q^{-2s_{1}m}+q^{2ms_{1}+m}\underset{\varpi^{2m+1}O_{F}}{\int}|N(1+u)-1|^{s_{1}}du].
\]

We substitute $t=1+u$ :

~

\[
L(x,1,\chi_{2},s_{1},s_{2})=\frac{\chi_{2}(-1)}{q^{-1}+1}[q^{-2s_{1}m}+q^{2ms_{1}+2m}\underset{1+\varpi^{2m+1}O_{F}}{\int}|N(t)-1|^{s_{1}}dt].
\]

~

We use Lemma \ref{measure lemma}. The last integral splits to 3 different
sums:

~

$L(x,1,\chi_{2},s_{1},s_{2})=\frac{\chi_{2}(-1)}{q^{-1}+1}[q^{-2s_{1}m}+q^{2ms_{1}+2m}[\overset{\infty}{\underset{i=s+1}{\sum}2}(1-q^{-1})q^{-2is_{1}-i}+(q-2)q^{-s-1}q^{-2s\cdot s_{1}}+\underset{i=2m+1}{\overset{s-1}{\sum}}q^{-2is_{1}-i}(1-q^{-1})].$

~

~

After calculating the sums and substituting $s\mapsto z$, we have:

~

\[
L(x,1,\chi_{2},z)=\frac{\chi_{2}(-1)}{q^{-1}+1}[q^{m(2z_{1}-2z_{2}+1)}+\{(1-q^{-1})[\frac{2q^{(2z_{1}-2z_{2})(s+1)}}{1-q^{2z_{1}-2z_{2}}}+\frac{(q^{(2m+1)(2z_{1}-2z_{2})}-q^{s(2z_{1}-2z_{2})})}{1-q^{2z_{1}-2z_{2}}}]+
\]

\[
+(q-2)q^{m(2z_{2}-2z_{1}+1)}\frac{q^{s(2z_{1}-2z_{2})-1+2z_{2}}-q^{s(2z_{1}-2z_{2})-1+2z_{1}}}{q^{2z_{2}}-q^{2z_{1}}}\}].
\]

~

Simplifying this expression once more, we get:

~

\[
L(x,1,\chi_{2},z)=\frac{\chi_{2}(-1)}{q^{-1}+1}\frac{q^{m-2sz_{2}}(1-q^{-1})}{q^{2z_{2}}-q^{2z_{1}}}\underset{\sigma}{\sum}\sigma(q^{\langle(m,s-m+1),2z\rangle}).
\]

~

\textbf{\uline{~5.}}

~

$x=\left(\begin{array}{cc}
\pi^{m} & \varpi\\
\overline{\varpi} & 0
\end{array}\right)$~$\,0<m<\frac{s}{2}+1$

~

\[
L(x,1,\chi_{2},s_{1},s_{2})=\frac{\chi_{2}(-1)q^{-2s_{2}}}{q^{-1}+1}\underset{O_{F}}{[\int}|\pi^{m}+Tr(\overline{\varpi}t)|^{s_{1}}dt+\underset{\varpi O_{F}}{\int}|\pi^{m}N(y)+Tr(\overline{\varpi}y)|^{s_{1}}dy].
\]

Once again, note that $|\pi^{m}+Tr(\overline{\varpi}t)|^{s_{1}}=q^{-2s_{1}m}$
for any $t\in O_{F}$ . (see Section \ref{sub:The-Trace}). 

We substitute $u=\varpi y$:

~

\[
L(x,1,\chi_{2},s_{1},s_{2})=\frac{\chi_{2}(-1)q^{-2s_{2}}}{q^{-1}+1}[q^{-2s_{1}m}+q\underset{\varpi^{2}O_{F}}{\int}|\pi^{m-1}N(u)+Tr(u)|^{s_{1}}du].
\]

~

We use the identity: $\pi^{m-1}N(u)+Tr(u)=N(\varpi^{1-m}+\bar{\varpi}^{m-1}u)-\pi^{1-m}$
to get:

~

~

\[
L(x,1,\chi_{2},s_{1},s_{2})=\frac{\chi_{2}(-1)q^{-2s_{2}}}{q^{-1}+1}[q^{-2ms_{1}}+q^{1+(m-1)2s_{1}}\underset{\varpi^{2}O_{F}}{\int}|N(1+\pi^{m-1}u)-1|^{s_{1}}du]
\]

~

Substitute $y=\pi^{m-1}u$:

~

\[
L(x,1,\chi_{2},s_{1},s_{2})=\frac{\chi_{2}(-1)q^{-2s_{2}}}{q^{-1}+1}[q^{-2ms_{1}}+q^{2m-1+(m-1)2s_{1}}\underset{\varpi^{2+2(m-1)}O_{F}}{\int}|N(1+y)-1|^{s_{1}}dy].
\]

~

Substitute $t=1+y$:

~

\[
L(x,1,\chi_{2},s_{1},s_{2})=\frac{\chi_{2}(-1)q^{-2s_{2}}}{q^{-1}+1}[q^{-2ms_{1}}+q^{2m-1+(m-1)2s_{1}}\underset{1+\varpi^{2m}O_{F}}{\int}|N(t)-1|^{s_{1}}dt].
\]

~

We calculate the last integral by using Lemma \ref{measure lemma}
.

~

\[
L(x,1,\chi_{2},s_{1},s_{2})=\frac{\chi_{2}(-1)q^{-2s_{2}}}{q^{-1}+1}\{q^{-2ms_{1}}+q^{m+(m-1)+(m-1)2s_{1}}\times
\]

\[
[\overset{\infty}{\underset{i=s+1}{\sum}2}(1-q^{-1})q^{-2is_{1}-i}+(q-2)q^{-s-1}q^{-2s\cdot s_{1}}+\underset{i=2m}{\overset{s-1}{\sum}}q^{-2is_{1}-i}(1-q^{-1})]\}=
\]

~

Substituting $s\mapsto z$ and simplifying the expression, we get: 

~

\[
L(x,1,\chi_{2},z)=\frac{\chi_{2}(-1)q^{-2s_{2}}}{q^{-1}+1}\{q^{-2ms_{1}}+q^{m+(m-1)(2z_{2}-2z_{1})}\times
\]

~

\[
[(1-q^{-1})(\frac{2q^{(2z_{1}-2z_{2})(s+1)+2z_{2}}}{q^{2z_{2}}-q^{2z_{1}}}+\frac{q^{(2z_{1}-2z_{2})2m+2z_{2}}-q^{(2z_{1}-2z_{2})s+2z_{2}}}{q^{2z_{2}}-q^{2z_{1}}})+(q-2)q^{s(2z_{1}-2z_{2})-1}]\}
\]

~

~

\[
=\frac{\chi_{2}(-1)q^{2z_{2}-\frac{1}{2}+m}}{q^{-1}+1}[q^{2z_{1}-2z_{2}}+q^{(1-m)(2z_{1}-2z_{2})}[\frac{2(q-1)q^{(2z_{1}-2z_{2})(s+1)+2z_{2}-1}}{q^{2z_{2}}-q^{2z_{1}}}+
\]

\[
\frac{(q-1)q^{(2z_{1}-2z_{2})2m+2z_{2}-1}-(q-1)q^{(2z_{1}-2z_{2})s+2z_{2}-1}}{q^{2z_{2}}-q^{2z_{1}}}++\frac{(q-2)q^{s(2z_{1}-2z_{2})-1+2z_{2}}-(q-2)q^{s(2z_{1}-2z_{2})-1+2z_{1}}}{q^{2z_{2}}-q^{2z_{1}}}].
\]

~

We extract and simplify the different elements in this expression: 

~

~

\[
L(x,1,\chi_{2},z)=\frac{\chi_{2}(-1)q^{-2sz_{2}-\frac{1}{2}+m}}{q^{-1}+1}\times[\frac{q^{m2z_{1}+(2-m+s)2z_{2}}}{q^{2z_{2}}-q^{2z_{1}}}+\frac{q^{(s+2-m)2z_{1}+m2z_{2}}}{q^{2z_{2}}-q^{2z_{1}}}
\]

\[
+\frac{-q^{(m+1)2z_{1}+(1-m+s)2z_{2}-1}-q^{(s+1-m)2z_{1}+(1+m)2z_{2}-1}}{q^{2z_{2}}-q^{2z_{1}}}]
\]

~

Note that $q^{2sz_{2}}L(x,1,\chi_{2},z)$ is an antisymmetric function.
We can express $L(x,1,\chi_{2},z)$ as follows:

~

\[
L(x,1,\chi_{2},z)=\frac{\chi_{2}(-1)q^{-\frac{1}{2}}}{q^{-1}+1}\frac{q^{m-2sz_{2}}}{q^{2z_{2}}-q^{2z_{1}}}[\underset{\sigma}{\sum}\sigma(q^{\langle(m,2+s-m),2z\rangle}-q^{\langle(m+1,s+1-m),2z\rangle-1})]
\]

~

~

\textbf{\uline{6.}}

$x\sim\left(\begin{array}{cc}
\pi^{m} & 1\\
1 & -\pi^{s-m}\rho
\end{array}\right)$,~~~$0<m<\frac{s}{2}$

~

\[
L(x,1,\chi_{2},s_{1},s_{2})=\frac{q\chi_{2}(-\Delta)}{q+1}\underset{O_{F}}{[\int}|\pi^{m}+Tr(t)-\pi^{s-m}\rho N(t)|^{s_{1}}dt+\underset{\varpi O_{F}}{\int}|\pi^{m}N(y)+Tr(y)-\pi^{s-m}\rho|^{s_{1}}dy].
\]

~

Because $|Tr(O_{F})|\leq|\pi^{\frac{s}{2}}|$ (see Section \ref{sub:The-Trace})
and $|\pi^{s-m}|<|\pi|^{m}$ , we have that 

the first integrand is constant:

~

\begin{equation}
L(x,1,\chi_{2},s_{1},s_{2})=\frac{q\chi_{2}(-\Delta)}{q+1}[q^{-2m\cdot s_{1}}+\underset{\varpi O_{F}}{\int}|\pi^{m}N(y)+Tr(y)-\pi^{s-m}\rho|^{s_{1}}dy].\label{eq:7.6}
\end{equation}

We use the identity $\pi^{m}N(y)+Tr(y)=N(\varpi^{-m}+\varpi^{m}y)-\pi^{-m}$
and set it into Eq \ref{eq:7.6}:

~

\[
L(x,1,\chi_{2},s_{1},s_{2})=\frac{\chi_{2}(-\Delta)}{q^{-1}+1}[q^{-2m\cdot s_{1}}+\underset{\varpi O_{F}}{\int}|N(\varpi^{-m}+\varpi^{m}y)-\pi^{-m}-\pi^{s-m}\rho|^{s_{1}}dy]=
\]

~

\[
\frac{\chi_{2}(-\Delta)}{q^{-1}+1}[q^{-2m\cdot s_{1}}+q^{2ms_{1}}\underset{\varpi O_{F}}{\int}|N(1+\pi^{m}y)-(1+\pi^{s}\rho)|^{s_{1}}dy].
\]

~

Substituting $t=1+\pi^{m}y$:

~

~

\begin{equation}
L(x,1,\chi_{2},s_{1},s_{2})=\frac{\chi_{2}(-\Delta)}{q^{-1}+1}[q^{-2m\cdot s_{1}}+q^{2ms_{1}+2m}\underset{1+\varpi^{2m+1}O_{F}}{\int}|N(t)-(1+\pi^{s}\rho)|^{s_{1}}dt].\label{eq:7.7-1}
\end{equation}

~

We calculate the last integral by decomposing $1+\varpi^{2m+1}O_{F}=\underset{j=2m+1}{\overset{s-1}{\bigcup}}(1+\varpi^{j}O_{F}^{*})\cup(1+\varpi^{s}O_{F}).$ 

From Section \ref{sub:The-Norm}, we deduce that for any $t\in1+p^{i}O_{F}^{*}$and
$2m+1\leq i<s$ the value of $|N(t)-(1+\pi^{s}\rho)|^{s_{1}}$is $q^{-2is_{1}}$. 

~

Because $1+\pi^{s}\rho\not\notin N(F)$, in the subgroup $1+\varpi^{s}O_{F}$
the integrand is simply $q^{-2s\cdot s_{1}}.$

~

Altogether, we have:

~

\[
L(x,1,\chi_{2},s_{1},s_{2})=\frac{\chi_{2}(-\Delta)}{q^{-1}+1}[q^{-2m\cdot s_{1}}+q^{2ms_{1}+2m}(\overset{s-1}{\underset{i=2m+1}{\sum}}(1-q^{-1})q^{-i}q^{-i2s_{1}}+q^{-s}q^{-2s\cdot s_{1}})]=
\]

~

\[
\frac{\chi_{2}(-\Delta)}{q^{-1}+1}[q^{-2m\cdot s_{1}}+q^{2ms_{1}+2m}(\overset{s-1}{\underset{i=2m+1}{\sum}}q^{(-2s_{1}-1)i}+q^{s(-2s_{1}-1)})]
\]

~

We simplify the sum and substitute $s\mapsto z$ to get: 

~
\[
L(x,1,\chi_{2},s_{1},s_{2})=\frac{\chi_{2}(-\Delta)}{q^{-1}+1}[\frac{q^{m(2z_{1}-2z_{2}+1)+2z_{2}}-q^{m\cdot(2z_{1}-2z_{2}+1)+2z_{1}}}{q^{2z_{2}}-q^{2z_{1}}}+
\]

\[
q^{2ms_{1}+2m}(\frac{q^{(2z_{1}-2z_{2})(2m+1)}-q^{(2z_{1}-2z_{2})s}}{1-q^{2z_{1}-2z_{2}}}+q^{s(2z_{1}-2z_{2})})]=
\]

~

\[
\frac{\chi_{2}(-\Delta)q^{m}q^{-2sz_{2}}}{q^{-1}+1}[\frac{q^{m2z_{1}+(1-m+s)2z_{2}}}{q^{2z_{2}}-q^{2z_{1}}}+\frac{q^{(s+1-m)2z_{1}+m2z_{2}}}{q^{2z_{1}}-q^{2z_{2}}}]
\]

~

And after further manipulations:

~

\[
L(x,1,\chi_{2},z)=\frac{\chi_{2}(-\Delta)q^{m-2sz_{2}}}{q^{-1}+1}[\underset{\sigma}{\sum}\sigma(\frac{q^{\langle(m,1-m+s),2z\rangle}}{q^{2z_{2}}-q^{2z_{1}}})].
\]

~

\textbf{\uline{7.}}

$x\sim\left(\begin{array}{cc}
\pi^{m} & p\\
\overline{p} & -\pi^{s+1-m}\rho
\end{array}\right)$,~~~~$0<m<\frac{s}{2}+1.$

~

$L(x,1,\chi_{2},z)=\frac{\chi_{2}(-\Delta)q^{-2s_{2}}}{q^{-1}+1}\underset{O_{F}}{[\int}|\pi^{m}+Tr(\bar{\varpi}t)-\pi^{s+1-m}\rho N(t)|^{s_{1}}dt+\underset{\varpi O_{F}}{\int}|\pi^{m}N(y)+Tr(\bar{\varpi}y)-\pi^{s+1-m}\rho|^{s_{1}}dy]$

~

Similarly to the previous calculations the first integrand is also
constant, so

~

\[
L(x,1,\chi_{2},s_{1},s_{2})=\frac{\chi_{2}(-\Delta)q^{-2s_{2}}}{q^{-1}+1}[q^{-2ms_{1}}+\underset{\varpi O_{F}}{\int}|\pi^{m-1}N(\bar{\varpi}y)+Tr(\bar{\varpi}y)-\pi^{s+1-m}\rho|^{s_{1}}dy].
\]

~

We substitute $u=\varpi y$:

~

\begin{equation}
L(x,1,\chi_{2},s_{1},s_{2})=\frac{\chi_{2}(-\Delta)q^{-2s_{2}}}{q^{-1}+1}[q^{-2ms_{1}}+q\underset{\varpi^{2}O_{F}}{\int}|\pi^{m-1}N(u)+Tr(u)-\pi^{s+1-m}\rho|^{s_{1}}du].\label{eq:7.8}
\end{equation}

~

We use the identity: $\pi^{m-1}N(u)+Tr(u)=N(\varpi^{1-m}+\varpi^{m-1}u)-\pi^{1-m}$
and set it to Eq \ref{eq:7.8}:

~

~

\[
L(x,1,\chi_{2},s_{1},s_{2})=\frac{\chi_{2}(-\Delta)q^{-2s_{2}}}{q^{-1}+1}[q^{-2ms_{1}}+q\underset{\varpi^{2}O_{F}}{\int}|N(\varpi^{1-m}+\varpi^{m-1}u)-\pi^{1-m}-\pi^{s+1-m}\rho|^{s_{1}}du]=
\]

~

\[
\frac{\chi_{2}(-\Delta)q^{-2s_{2}}}{q^{-1}+1}[q^{-2ms_{1}}+q^{1+(m-1)2s_{1}}\underset{\varpi^{2}O_{F}}{\int}|N(1+\varpi^{2m-2}u)-(1+\pi^{s}\rho)|^{s_{1}}du].
\]

~

We substitute $t=1+\varpi^{2m-2}u$ :

~

\[
L(x,1,\chi_{2},s_{1},s_{2})=\frac{\chi_{2}(-\Delta)q^{-2s_{2}}}{q^{-1}+1}[q^{-2ms_{1}}+q^{(m-1)2s_{1}+2m-1}\underset{1+\varpi^{2m}O_{F}}{\int}|N(t)-(1+\pi^{s}\rho)|^{s_{1}}dt].
\]

~

The calculation of the integral is similar to the calculation of the
integral in Eq \ref{eq:7.7-1}:

~

\[
L(x,1,\chi_{2},s_{1},s_{2})=\frac{\chi_{2}(-\Delta)q^{-2s_{2}}}{q^{-1}+1}[q^{-2ms_{1}}+q^{(m-1)(2z_{2}-2z_{1})+m}(\overset{s-1}{\underset{i=2m}{\sum}}q^{(-2s_{1}-1)i}+q^{-s}q^{-2s\cdot s_{1}})].
\]

~

We substitute $s\mapsto z$:

~

\[
L(x,1,\chi_{2},z)=\frac{\chi_{2}(-\Delta)q^{2z_{2}-\frac{1}{2}+m-(s+1)2z_{2}}}{q^{-1}+1}[\frac{q^{m2z_{1}+(2+s-m)2z_{2}+2z_{2}}}{q^{2z_{2}}-q^{2z_{1}}}-\frac{q^{(s-m+2)2z_{1}+m2z_{2}}}{q^{2z_{2}}-q^{2z_{1}}}].
\]

~

~

\[
=\frac{\chi_{2}(-\Delta)q^{m-\frac{1}{2}-s2z_{2}}}{q^{-1}+1}[\underset{\sigma}{\sum}\sigma(\frac{q^{\langle(m,2+s-m),2z\rangle}}{q^{2z_{2}}-q^{2z_{1}}})].
\]

~

\subsubsection{RU case}

~

Recall that $l=\frac{s-1}{2}$ , $Tr(\varpi^{2i}O_{F})=\pi^{l+1+i}O_{E}$
and $Tr(\varpi^{2i-1}O_{F})=\pi^{l+i}O_{E}$.

~

\textbf{\uline{1.}}

$x\sim\left(\begin{array}{cc}
\pi^{\frac{s+1}{2}} & \varpi\\
\overline{\varpi} & -\pi^{\frac{s+1}{2}}\rho
\end{array}\right)$

~

$L(x,1,\chi_{2},s_{1},s_{2})=\frac{\chi_{2}(-\Delta)q^{-2s_{2}}}{q^{-1}+1}\underset{O_{F}}{[\int}|\pi^{\frac{s+1}{2}}+Tr(\bar{\varpi}t)-\pi^{\frac{s+1}{2}}\rho N(t)|^{s_{1}}dt+\underset{\varpi O_{F}}{\int}|\pi^{\frac{s+1}{2}}N(y)+Tr(\bar{\varpi}y)-\pi^{\frac{s+1}{2}}\rho|^{s_{1}}dy]$

~

Similarly to previous calculation, the second integrand is constant
because $|\pi^{\frac{s+1}{2}}\rho|>|\pi^{\frac{s+1}{2}}N(y)+Tr(\bar{\varpi}y)|$
(see Section \ref{sub:The-Norm}) . 

~

\[
L(x,1,\chi_{2},s_{1},s_{2})=\frac{\chi_{2}(-\Delta)q^{-2s_{2}}}{q^{-1}+1}\underset{O_{F}}{[\int}|\pi^{\frac{s+1}{2}}+Tr(\bar{\varpi}t)-\pi^{\frac{s+1}{2}}\rho N(t)|^{s_{1}}dt+q^{(s+1)(-s_{1})-1}].
\]

~

We have that: 
\[
\underset{O_{F}}{\int}|\pi^{\frac{s+1}{2}}+Tr(\bar{\varpi}t)-\pi^{\frac{s+1}{2}}\rho N(t)|^{s_{1}}dt=\underset{O_{F}^{*}}{\int}|\pi^{\frac{s+1}{2}}+Tr(\bar{\varpi}t)-\pi^{\frac{s+1}{2}}\rho N(t)|^{s_{1}}dt+.
\]

\[
\underset{\varpi O_{F}}{\int}|\pi^{\frac{s+1}{2}}+Tr(\bar{\varpi}t)-\pi^{\frac{s+1}{2}}\rho N(t)|^{s_{1}}dt=
\]

\[
\underset{O_{F}^{*}}{\int}|\pi^{\frac{s+1}{2}}+Tr(\bar{\varpi}t)-\pi^{\frac{s+1}{2}}\rho N(t)|^{s_{1}}dt+q^{(s+1)(-s_{1})-1}.
\]

~

So :

~

\[
L(x,1,\chi_{2},s_{1},s_{2})=\frac{\chi_{2}(-\Delta)q^{-2s_{2}}}{q^{-1}+1}\underset{O_{F}^{*}}{[\int}|\pi^{\frac{s+1}{2}}+Tr\bar{(\varpi}y)-\pi^{\frac{s+1}{2}}\rho N(y)|^{s_{1}}dy+2q^{(s+1)(-s_{1})-1}]=
\]

\[
\frac{\chi_{2}(-\Delta)q^{-2s_{2}}}{q^{-1}+1}\underset{O_{F}^{*}}{[\int}|\pi^{\frac{s+1}{2}}N(\frac{1}{y})+Tr\bar{(\frac{\varpi}{y}})-\pi^{\frac{s+1}{2}}\rho|^{s_{1}}dy+2q^{(s+1)(-s_{1})-1}]
\]

~

We substitute $u=\frac{1}{y}:$

~

\begin{equation}
L(x,1,\chi_{2},s_{1},s_{2})=\frac{\chi_{2}(-\Delta)q^{-2s_{2}}}{q^{-1}+1}\underset{O_{F}^{*}}{[\int}|\pi^{\frac{s+1}{2}}N(u)+Tr(\overline{\varpi}u)-\pi^{\frac{s+1}{2}}\rho|^{s_{1}}du+2q^{(s+1)(-s_{1})-1}].\label{eq:7.9}
\end{equation}

~

We use the identity: $\pi^{\frac{s+1}{2}}N(u)+Tr(pu)=N((\varpi^{\frac{s+1}{2}}u+\overline{\varpi^{\frac{1-s}{2}}})-\pi^{\frac{-s-1}{2}}$
and set it to Eq \ref{eq:7.9}:

~

~

\[
L(x,1,\chi_{2},z)=\frac{q\chi_{2}(-\Delta)q^{-2s_{2}}}{q+1}\underset{O_{F}^{*}}{[\int}|N((\varpi^{\frac{s+1}{2}}u+\overline{\varpi^{\frac{1-s}{2}}})-\pi^{\frac{-s+1}{2}}-\pi^{\frac{s+1}{2}}\rho)|^{s_{1}}du+2q^{(s+1)(-s_{1})-1}]=
\]

~

\[
\frac{q\chi_{2}(-\Delta)q^{-2s_{2}}}{q+1}[\underset{O_{F}^{*}}{\int}|\pi^{\frac{1-s}{2}}N((\varpi^{s}u+1)-\pi^{\frac{1-s}{2}}-\pi^{\frac{s+1}{2}}\rho)|^{s_{1}}du+2q^{(s+1)(-s_{1})-1}]
\]

~

\[
=\frac{\chi_{2}(-\Delta)q^{-2s_{2}}}{q^{-1}+1}[\underset{O_{F}^{*}}{\int}|\pi^{\frac{1-s}{2}}[N((\varpi^{s}u+1)-(1+\pi^{s}\rho)]|^{s_{1}}du+2q^{(s+1)(-s_{1})-1}]
\]

~

But, as in the RP case, we have:~~ $N(\varpi^{s}u+1)-(1+\pi^{s}\rho)=\pi^{s}\eta\,\,\,,\,\,\eta\in O_{E}^{*}$

~

~

\[
L(x,1,\chi_{2},z)=\frac{\chi_{2}(-\Delta)q^{-2s_{2}}}{q^{-1}+1}[\underset{O_{F}^{*}}{\int}|\pi^{\frac{1+s}{2}}\eta|^{s_{1}}du+2q^{(s+1)(-s_{1})-1}]=
\]

~

\[
\frac{\chi_{2}(-\Delta)q^{-2s_{2}}}{q^{-1}+1}[q^{(s+1)(-s_{1})}(1-q^{-1})+2q^{(s+1)(-s_{1})-1}]
\]

~

Substitute $s\mapsto z:$

~

\[
L(x,1,\chi_{2},z)=\chi_{2}(-\Delta)q^{-2s\cdot z_{2}+\frac{s}{2}}[q^{(s+1)(z_{1}+z_{2})}].
\]

~

\textbf{\uline{2.}}

$x\sim\left(\begin{array}{cc}
 & 1\\
1
\end{array}\right)$

~

$L(x,1,\chi_{2},s_{1},s_{2})=\frac{\chi_{2}(-1)}{q^{-1}+1}\underset{O_{F}}{[\int}|Tr(t))|^{s_{1}}dt+\underset{\varpi O_{F}}{\int}|Tr(y)|^{s_{1}}dy]$

~

Similarly to the RP case , we use \ref{sub:The-Trace} to the substitution
$u=Tr(t),\,\, v=Tr(y)$ . We have:

~

\[
L(x,1,\chi_{2},s_{1},s_{2})=\frac{\chi_{2}(-1)}{q^{-1}+1}[q^{l+1}\underset{\pi^{l+1}O_{E}}{\int}|u|^{s_{1}}du+q^{l}\underset{\pi^{l+1}O_{E}}{\int}|v|^{s_{1}}dv]=
\]

~

Calculation on the integrals above results in geometric sums:

~

\[
L(x,1,\chi_{2},s_{1},s_{2})=\frac{\chi_{2}(-1)}{q^{-1}+1}[q\underset{i=1}{\overset{\infty}{\sum}}q^{-2(l+i)s_{1}}\cdot q^{-i}(1-q^{-1})+\underset{i=1}{\overset{\infty}{\sum}}q^{-2(l+i)s_{1}}\cdot q^{-i}(1-q^{-1})]=
\]

~

\[
\frac{\chi_{2}(-1)}{q^{-1}+1}[\underset{i=1}{\overset{\infty}{\sum}}q^{-2(l+i)s_{1}}\cdot q^{-i}(1-q^{-1})(q+1)]=q(1-q^{-1})\chi_{2}(-1)q^{-2ls_{1}}[\underset{i=1}{\overset{\infty}{\sum}}q^{(-2s_{1}-1)i}].
\]

~

We substitute $s\mapsto z$:

~

\[
L(x,1,\chi_{2},z)=q\chi_{2}(-1)(1-q^{-1})q^{-2sz_{2}}q^{l(2z_{1}+2z_{2})}[\frac{q^{2z_{1}+2z_{2}}}{q^{2z_{2}}-q^{2z_{1}}}]=
\]

\[
q\chi_{2}(-1)(1-q^{-1})q^{-2sz_{2}}[\frac{q^{(s+1)(z_{1}+z_{2})}}{q^{2z_{2}}-q^{2z_{1}}}]
\]

~

\textbf{\uline{3.}}

$x\sim\left(\begin{array}{cc}
 & \varpi\\
\overline{\varpi}
\end{array}\right)$

\[
L(x,1,\chi_{2},s_{1},s_{2})=\frac{\chi_{2}(-1)q^{-2s_{2}}}{1+q^{-1}}[\underset{O_{F}}{\int}|Tr(\overline{\varpi}t))|^{s_{1}}dt+\underset{\varpi O_{F}}{\int}|Tr(\overline{\varpi}y)|^{s_{1}}dy=
\]

~

\[
\frac{\chi_{2}(-1)q^{-2s_{2}+1}}{1+q^{-1}}\underset{\varpi O_{F}}{[\int}|Tr(t))|^{s_{1}}dt+\underset{\varpi^{2}O_{F}}{\int}|Tr(y)|^{s_{1}}dy]
\]

~

We substitute $u=Tr(t),\, v=Tr(y)$: (See Section \ref{sub:The-Trace}
and Lemma \ref{pushforword lemma})

~

\[
L(x,1,\chi_{2},s_{1},s_{2})=\frac{q^{l}\chi_{2}(-1)q^{-2s_{2}+1}}{1+q^{-1}}[\underset{\pi^{l+1}O_{E}}{\int}|u|^{s_{1}}du+\underset{\pi^{l+2}O_{E}}{\int}|v|^{s_{1}}dv]=
\]

~

\[
\frac{\chi_{2}(-1)q^{-2s_{2}+1}}{1+q^{-1}}[\underset{i=1}{\overset{\infty}{\sum}}q^{-2(l+i)s_{1}}\cdot q^{-i}(1-q^{-1})+\underset{i=2}{\overset{\infty}{\sum}}q^{-2(l+i)s_{1}}\cdot q^{-i}(1-q^{-1})]=
\]

~

We compute the geometric sums and simplify:

~

\[
L(x,1,\chi_{2},s_{1},s_{2})=\frac{\chi_{2}(-1)q^{-2s_{2}+1}q^{-2ls_{1}}(1-q^{-1})}{1+q^{-1}}[\frac{q^{(-2s_{1}-1)}}{1-q^{-2s_{1}-1}}+\frac{q^{(-2s_{1}-1)2}}{1-q^{-2s_{1}-1}}]=
\]

Substitute $s\mapsto z$:

\[
L(x,1,\chi_{2},z)=\frac{\chi_{2}(-1)q^{\frac{3}{2}}q^{l(2z_{1}-2z_{2}+1)}(1-q^{-1})}{1+q^{-1}}[\frac{q^{2z_{1}+2z_{2}}}{q^{2z_{2}}-q^{2z_{1}}}+\frac{q^{4z_{1}}}{q^{2z_{2}}-q^{2z_{1}}}]=
\]

~

\[
\frac{\chi_{2}(-1)(1-q^{-1})q^{l+\frac{1}{2}}q^{-2s\cdot z_{2}}}{1+q^{-1}}q^{l(2z_{1}+2z_{2})}q^{2z_{1}+2z_{2}}[\frac{q^{2z_{1}}+q^{2z_{2}}}{q^{2z_{2}}-q^{2z_{1}}}]
\]

~

~

\section{~Calculation of $L(x,\chi^{*},\chi_{2},z)$ }

~

In the following two subsections we prove that on most of the representatives
the spherical function vanish.

~

For this section, $A_{i},B_{i},C_{i},..$ will denote constants that
are resulted from different coordinate transformation. 

~

\subsection{Calculation on non-diagonal representatives}

~

Recall that for $x\sim\left(\begin{array}{cc}
a & \varpi^{c}\\
\overline{\varpi}^{c} & b
\end{array}\right)$, we have from \ref{formula lemma} :

~

$L(x,1,\chi_{2},s_{1},s_{2})=\frac{q\chi_{2}(det(x))|det(x)|^{s_{2}}}{q+1}\underset{O_{F}}{[\int}|a+Tr(\overline{\varpi}^{c}t)+bN(t)|^{s_{1}}dt+\underset{\varpi O_{F}}{\int}|aN(y)+Tr(\overline{\varpi}^{c}y)+b|^{s_{1}}dy].$

~

We will denote :

\[
\begin{array}{c}
I_{1}=\underset{O_{F}}{\int}|a+Tr(\overline{\varpi}^{c}t)+bN(t)|^{s_{1}}dt\\
I_{2}=\underset{\varpi O_{F}}{\int}|aN(y)+Tr(\overline{\varpi}^{c}y)+b|^{s_{1}}dy.
\end{array}
\]

~

\subsubsection{Common Representatives for RP and RU }

~

\textbf{\uline{1.}} $x\sim\left(\begin{array}{cc}
0 & \varpi^{a}\\
\overline{\varpi}^{a} & 0
\end{array}\right)$

~

\[
L(x,\chi^{*},\chi_{2},s)=\frac{q\chi_{2}(1)}{q+1}[\underset{O_{F}}{\int}\chi^{*}(Tr(\overline{\varpi}^{a}t))|Tr(\overline{\varpi}^{a}t)|^{s_{1}}dt+\underset{\varpi O_{F}}{\int}\chi^{*}(Tr(\overline{\varpi}^{a}t))|Tr(\overline{\varpi}^{a}t)|^{s_{1}}dt]
\]

~

After substitution $u=Tr(\varpi^{a}t)$ (see Lemma \ref{sub:The-Trace},
Lemma \ref{pushforword lemma}) we have:

~

\[
\underset{\varpi^{k}O_{F}}{\int}\chi^{*}(Tr(\overline{\varpi}^{a}t))|Tr(\overline{\varpi}^{a}t)|^{s_{1}}dt=A_{1}\underset{\pi^{h}O_{E}}{\int}\chi^{*}(u)|u|^{s_{1}}du=A_{1}\underset{j=h}{\overset{\infty}{\sum}}\underset{\pi^{j}O_{E}^{*}}{\int}\chi^{*}(u)|u|^{s_{1}}du
\]

~

Where $h$ is defined by $Tr(\bar{\varpi}^{a+k}O_{F})=\pi^{h}O_{E}.$

~

But: $\underset{\pi^{j}O_{E}^{*}}{\int}\chi^{*}(u)|u|^{s_{1}}du=q^{-2js_{1}-j}\underset{O_{E}^{*}}{\int}\chi^{*}(y)dy^{*}=0$,
since $\chi^{*}$ is a non trivial character on $O_{E}^{*}$, the
integral on each term vanishes, so $L(x,\chi^{*},\chi_{2},z)=0$

~

~

~

\textbf{\uline{2.}} $x=\left(\begin{array}{cc}
\pi^{m} & 1\\
1 & -\pi^{s-m}\rho
\end{array}\right)$,~~~$0<m<\frac{s}{2}$ 
\begin{itemize}
\item We show that $I_{1}=0$:
\[
I_{1}=\underset{O_{F}}{\int}\chi^{*}(\pi^{m}+Tr(t)-\pi^{s-m}\rho N(t))|\pi^{m}+Tr(t)-\pi^{s-m}\rho N(t)|^{s_{1}}dt
\]

\end{itemize}
~

Note that $|Tr(t)-\pi^{s-m}\rho N(t)|<|\pi^{m}|$ (see Section \ref{corolary 3.3})
and so the absolute value is constant:

~

\[
I_{1}=A_{1}\underset{O_{F}}{\int}\chi^{*}(\pi^{m}+Tr(t)-\pi^{s-m}\rho N(t))dt
\]

~

We use the identity: $\pi^{m}+Tr(t)-\pi^{s-m}\rho N(t)=\pi^{m}+\frac{\pi^{m-s}}{\rho}-\pi^{s-m}\rho N(t-\frac{\pi^{m-s}}{\rho})$
to have: 

~

~

\[
I_{1}=\underset{O_{F}}{\int}\chi^{*}(\pi^{m}+\frac{\pi^{m-s}}{\rho}-\pi^{s-m}\rho N(t-\frac{\pi^{m-s}}{\rho}))dt
\]

~

Denote by $A_{2}=\chi^{*}(\frac{\pi^{m-s}}{\rho})$ , then we have:

~

\[
I_{1}=A_{2}\underset{O_{F}}{\int}\chi^{*}(1+\pi^{s}\rho-\pi^{2(s-m)}\rho^{2}N(t-\frac{\pi^{m-s}}{\rho}))dt=A_{2}\underset{O_{F}}{\int}\chi^{*}(1+\pi^{s}\rho-N(\pi^{s-m}\rho t-1))dt.
\]

We Substitute $u=\pi^{s-m}\rho t-1$, we have:

~

~

\[
I_{1}=A_{3}\underset{1+\varpi^{2(s-m)}O_{F}}{\int}\chi^{*}(1+\pi^{s}\rho-N(u))du.
\]

~

~

We substitute $\nu=N(u)$, note that $N(1+\varpi^{2(s-m)}O_{F})=1+\pi^{\frac{s}{2}-m+s}O_{E}$
(see \ref{sub:The-Norm}). We make use of Lemma \ref{pushforword lemma}
:

~

~

we have ~

~

\[
I_{1}=A_{4}\underset{1+\pi^{\frac{s}{2}-m+s}O_{E}}{\int}\chi^{*}[(1+\pi^{s}\rho)-t]dt.
\]

~

We substitute $s=(1+\pi^{s}\rho)-t$. Note that $\pi^{s}\rho-\pi^{\frac{s}{2}-m+s}O_{E}=\pi^{s}\rho(1+\pi^{\frac{s}{2}-m}O_{E})$,
so for the substitution $\pi^{s}\rho y=s$ we get:

~

\[
I_{1}=A_{4}\underset{\pi^{s}\rho(1+\pi^{\frac{s}{2}-m}O_{E})}{\int}\chi^{*}(s)ds=A_{5}\underset{1+\pi^{\frac{s}{2}-m}O_{E}}{\int}\chi^{*}(y)dy
\]

~

Because $1+\pi^{\frac{s}{2}-m}O_{F}$ contains non norm elements,
the character $\chi^{*}$ is a non trivial character of the group
$1+\pi^{\frac{s}{2}-m}O_{E}$ and the integral vanish.

~

~
\begin{itemize}
\item Now we show that:
\end{itemize}
\[
I_{2}=\underset{\varpi O_{F}}{\int}\chi^{*}(\pi^{m}N(y)+Tr(y)-\pi^{s-m}\rho)|\pi^{m}N(y)+Tr(y)-\pi^{s-m}\rho|^{s_{1}}dy=0
\]

We have already shown in our calculation of this representative in
Section 7.2 that:

~

\[
I_{2}=\underset{\varpi O}{\int}\chi^{*}(\pi^{m}N(y)+Tr(y)-\pi^{s-m}\rho)|\pi^{m}N(y)+Tr(y)-\pi^{s-m}\rho|^{s_{1}}dy=
\]

~

\[
B_{1}\underset{1+\varpi^{m+1}O_{F}}{\int}\chi^{*}(N(t)-(1+\pi^{s}\rho))|N(t)-(1+\pi^{s}\rho)|^{s_{1}}dt]
\]

~

The Norm induces an homomorphism :

\[
\widetilde{N}:\frac{1+\varpi^{m+1}O_{F}}{1+\varpi^{s+1}O_{F}}\to\frac{1+\pi^{m+1}O_{F}}{1+\pi^{s+1}O_{F}}
\]

~

On each coset the $|N(t)-(1+\pi^{s}\rho)|^{s_{1}}$ is a constant
function. (By our corollaries in Section \ref{sub:The-Norm})

Note that $q^{-s}<|N(t)-(1+\pi^{s}\rho)|_{E}\leq q^{-m-1}$ since
$N(t)\in1+\pi^{m+1}O_{E}$ and $1+\pi^{s}\rho\notin N(F^{*})$.

~

~

~

We integrate on the different coset of the form $a_{i}(1+\varpi^{s+1}O_{F})\subset1+\varpi^{m+1}O_{F}$

~

\[
I_{2}=\underset{1+\varpi^{m+1}O_{F}}{\int}\chi^{*}(N(t)-(1+\pi^{s}\rho))|N(t)-(1+\pi^{s}\rho)|^{s_{1}}dy=\overset{}{\underset{i}{\sum}}B_{i}\underset{a_{i}(1+\varpi^{s+1}O_{F})}{\int}\chi^{*}(N(t)-(1+\pi^{s}\rho))dt
\]

~

Now we show that each integral in the different terms vanish: 

~

\[
I_{2,i}=\underset{a_{i}(1+\varpi^{s+1}O_{F})}{\int}\chi^{*}(N(t)-(1+\pi^{s}\rho))|dt.
\]

We Substitute $u=N(t)$ (note that $N(1+\varpi^{s+1}O_{F})=1+\pi^{s+1}O_{E}$),
by making use of Lemma \ref{pushforword lemma} we get:

\[
I_{2,i}=C_{1}\underset{N(a_{i})(1+\pi^{s+1}O_{E})}{\int}\chi^{*}(u-(1+\pi^{s}\rho))du
\]

~

We substitute $\eta=u-(1+\pi^{s}\rho)$. Since we know that $q^{-s}<|\eta|_{E}\leq q^{-m-1}$
, we deduce that we can present the integral in the following form: 

\[
I_{2,i}=C_{2}\underset{\pi^{k}\zeta_{i}+\pi^{s+1}O_{E}}{\int}\chi^{*}(\eta)d\eta,
\]

where $\zeta_{i}\in O_{F}^{*}$ and $m+1\leq k<s$ . We substitute
again $y=\pi^{-k}\zeta_{i}^{-1}\cdot\eta$ to get:

\[
I_{2,i}=C_{3}\underset{1+\pi^{s+1-k}O_{E}}{\int}\chi^{*}(y)dy=0.
\]

Since $1+\pi^{s+1-k}O_{E}$ contains a non-norm elements, $\chi^{*}$is
a non-trivial character and the integral vanish.

~

~

\textbf{\uline{3.}}

$x=\left(\begin{array}{cc}
\pi^{m} & \varpi\\
\overline{\varpi} & -\pi^{s-m}\rho
\end{array}\right)$~

A similar proof to the previous representative show that $L(x,\chi^{*},s_{1},s_{2})=0.$

~

~

\textbf{\uline{4.}}

$x=\left(\begin{array}{cc}
\pi^{m} & 1\\
1 & 0
\end{array}\right)$

~
\begin{itemize}
\item We prove that $I_{1}=\underset{O_{F}}{\int}\chi^{*}(\pi^{m}+Tr(t))|\pi^{m}+Tr(t)|^{s_{1}}dt=0$
\end{itemize}
We substitute $u=Tr(t)$. We know that (see Section \ref{sub:The-Trace})
$Tr(O_{F})=\pi^{h}O_{F},\,\,\,\frac{s}{2}\leq h\leq\frac{s}{2}+1$
:

~

\[
\underset{O_{F}}{\int}\chi^{*}(\pi^{m}+Tr(t))|\pi^{m}+Tr(t)|^{s_{1}}dt=A_{1}\underset{\pi^{h}O_{E}}{\int}\chi^{*}(\pi^{m}+u)|\pi^{m}+u|^{s_{1}}du.
\]

~

Since $m<h$ the absolute value is constant. We have:

~

\[
I_{1}=A_{2}\underset{\pi^{\frac{s}{2}}O_{E}}{\int}\chi^{*}(\pi^{m}+t)dt=A_{3}\underset{1+\pi^{h-m}O_{E}}{\int}\chi^{*}(t)dt=0
\]

Since $h-m<s+1$ we have that the group $1+\pi^{h-m}O_{E}$ contains
a non norm elements and hence $\chi^{*}$ is a non-trivial character
and the integral vanishes.

~

~
\begin{itemize}
\item We show that $I_{2}=0.$ We have already shown in Section \ref{6.2.1 matrix 4}
\end{itemize}
\[
I_{2}=\underset{\varpi O_{F}}{\int}\chi^{*}(\pi^{m}N(y)+Tr(y))|\pi^{m}N(y)+Tr(y)|^{s_{1}}dy]=B_{1}\underset{1+\varpi^{2m+1}O_{F}}{\int}\chi^{*}(N(t)-1)|N(t)-1|^{s_{1}}dt.
\]

For every $2m+1\leq k$ we show that the integral vanish on the set
$N^{-1}(1+\pi^{k}O_{E}^{*})\cap(1+\varpi^{2m+1}O_{F})$ :

~

For $k<s+1$ the set $N^{-1}(1+\pi^{k}O_{E}^{*})\cap(1+\varpi^{2m+1}O_{F})$
can be represented as a union of cosets:
\[
\underset{}{N^{-1}(1+\pi^{k}O_{E}^{*})\cap(1+\varpi^{2m+1}O_{F})=\underset{i}{\cup}}a_{i}[1+\varpi^{s+1}O_{F}].
\]

On each coset the absolute value is constant ans so:

~

\[
\underset{N^{-1}(1+\pi^{k}O_{E}^{*})\cap(1+\varpi^{2m+1}O_{F})}{\int}\chi^{*}(N(t)-1)|N(t)-1|^{s_{1}}dt=\underset{i}{\sum}C_{i}\underset{a_{i}[1+\varpi^{s+1}O_{F}]}{\int}\chi^{*}(N(t)-1)dt
\]

~

~

We substitute $N(t)-1=u$ (use Lemma \ref{pushforword lemma} and
Section \ref{sub:The-Norm}) , then on each term in the sum:

\[
\underset{a_{i}[1+\varpi^{s+1}O_{F}]}{\int}\chi^{*}(N(t)-1)dt=A_{i}\underset{N(a_{i})[1+\pi^{s+1}O_{E}]-1}{\int}\chi^{*}(u)du
\]

~

But we know that $|u|_{E}=q^{-k}$ and so we can represent the domain
of integration as:

~

\[
N(a_{i})(1+\pi^{s+1}O_{E})-1=\pi^{k}\eta_{i}+\pi^{s+1}O_{E},\,\,\,\eta_{i}\in O_{E}^{*}
\]

~

We have:

~

\[
\underset{N(a_{i})[1+\pi^{s+1}O_{E}]}{\int}\chi^{*}(u-1)du=\underset{\pi^{k}\eta_{i}+\pi^{s+1}O_{E}}{\int}\chi^{*}(u)du
\]

~

We substitute $s=\pi^{-k}\eta_{i}^{-1}u$ and get:

~

\[
\underset{\pi^{k}\eta_{i}+\pi^{s+1}O_{E}}{\int}\chi^{*}(u)du=\underset{1+\pi^{s+1-k}O_{E}}{\int}\chi^{*}(s)ds=0
\]

~

Since $1+\pi^{s+1-k}O_{E}$ contains a non-norm elements $\chi^{*}$
is a non-trivial character and the integral vanish.

~

~

For $s+1\leq k$ we can represent $N^{-1}(1+\pi^{k}O_{E}^{*})\cap1+\varpi^{2m+1}O_{F}=\underset{i}{\cup}a_{i}(1+\varpi^{k+1}O_{F})$ 

~

Now we integrate on coset of the form $a_{i}(1+\varpi^{k+1}O_{F})$,
similarly the absolute value is constant:

~

\[
\underset{N^{-1}(1+\pi^{k}O_{E}^{*})\cap(1+\varpi^{2m+1}O_{F})}{\int}\chi^{*}(N(t)-1)|N(t)-1|^{s_{1}}dt=\underset{i}{\sum}D_{i}\underset{a_{i}[1+\varpi^{k+1}O_{F}]}{\int}\chi^{*}(N(t)-1)dt.
\]

~

We substitute $N(t)-1=u.$ On each term :

\[
\underset{a_{i}[1+\varpi^{k+1}O_{F}]}{\int}\chi^{*}(N(t)-1)dt=\underset{N(a_{i})(1+\pi^{k+1}O_{E})}{\int}\chi^{*}(u)du
\]

~

But $|u|_{E}=q^{-k}$ so one can represent the domain of integration
as:
\[
N(a_{i})(1+\pi^{k+1}O_{E})=\pi^{k}\eta_{i}+\pi^{k+1}O_{E},\,\,\,\eta_{i}\in O_{E}^{*}
\]

We have by making the substitution $y=\pi^{-k}\eta_{i}^{-1}$ :

\[
\underset{\pi^{k}\eta_{i}+\pi^{k+1}O_{E}}{\int}\chi^{*}(u)du=D\underset{1+\pi O_{E}}{\int}\chi^{*}(y)dy=0.
\]

We showed that $\underset{N^{-1}(1+\pi^{k}O_{E}^{*})\cap(1+\varpi^{2m+1}O_{F})}{\int}\chi^{*}(N(t)-1)|N(t)-1|^{s_{1}}dt=0$
for every $2m+1\leq k$ , and so :

\[
\underset{1+\varpi^{2m+1}O_{F}}{\int}\chi^{*}(N(t)-1)|N(t)-1|^{s_{1}}dt=0\Rightarrow I_{2}=0.
\]

~

\textbf{\uline{5.}}

~

$x=\left(\begin{array}{cc}
\pi^{m} & \varpi\\
\overline{\varpi} & 0
\end{array}\right)$- 

~

A similar proof to the previous will show that $L(x,\chi^{*},s_{1},s_{2})=0.$ 

~

\subsubsection{\label{sub:RP-representative:}RP representative:}

~

$x=\left(\begin{array}{cc}
\pi^{\frac{s}{2}} & 1\\
1 & -\pi^{\frac{s}{2}}\rho
\end{array}\right)$ (RP)

~

\[
L(x,1,\chi_{2},z)=\frac{\chi_{2}(-\Delta)}{q^{-1}+1}\underset{O_{F}}{[\int}\chi^{*}(\pi^{\frac{s}{2}}+Tr(t)-\pi^{\frac{s}{2}}\rho N(t))|\pi^{\frac{s}{2}}+Tr(t)-\pi^{\frac{s}{2}}\rho N(t)|^{s_{1}}dt+
\]

\[
\underset{\varpi O}{\int}\chi^{*}(\pi^{\frac{s}{2}}N(y)+Tr(y)-\pi^{\frac{s}{2}}\rho)|\pi^{\frac{s}{2}}N(y)+Tr(y)-\pi^{\frac{s}{2}}\rho|^{s_{1}}dy].
\]

\begin{itemize}
\item We show that $I_{1}=0$:
\end{itemize}
\begin{equation}
I_{1}=\underset{O_{F}}{\int}\chi^{*}(\pi^{\frac{s}{2}}+Tr(t)-\pi^{\frac{s}{2}}\rho N(t))|\pi^{\frac{s}{2}}+Tr(t)-\pi^{\frac{s}{2}}\rho N(t)|^{s_{1}}dt\label{eq:8.1}
\end{equation}

~

Setting the following identity to \eqref{eq:8.1}:

\[
-\rho\pi^{\frac{s}{2}}N(t)+Tr(u)=(\varpi^{\frac{s}{2}}\alpha t+\overline{\alpha^{-1}\varpi^{-\frac{s}{2}}})(\overline{\varpi^{\frac{s}{2}}\alpha t}+\alpha^{-1}\varpi^{-\frac{s}{2}})+\frac{\pi^{-\frac{s}{2}}}{\rho}=-\frac{\pi^{-\frac{s}{2}}}{\rho}N(\pi^{s}\rho t+1)+\frac{\pi^{-\frac{s}{2}}}{\rho}
\]

~

\[
I_{1}=\underset{O_{F}}{\int}\chi^{*}(\pi^{\frac{s}{2}}-\frac{\pi^{-\frac{s}{2}}}{\rho}N(\pi^{s}\rho t+1)+\frac{\pi^{-\frac{s}{2}}}{\rho})|\pi^{\frac{s}{2}}-\frac{\pi^{-\frac{s}{2}}}{\rho}N(\pi^{s}\rho t+1)+\frac{\pi^{-\frac{s}{2}}}{\rho}|^{s_{1}}dt
\]
~

Simplifying this expression, we have:

\[
I_{1}=A\underset{O_{F}}{\int}\chi^{*}(1+\pi^{s}\rho-N(\pi^{s}\rho t+1))|1+\pi^{s}\rho-N(\pi^{s}\rho u+1)|^{s_{1}}dt.
\]

We Substitute $u=1+\pi^{s}\rho t$:

\[
I_{1}=A\underset{1+\pi^{s}O_{F}}{\int}\chi^{*}(1+\pi^{s}\rho-N(u))|1+\pi^{s}\rho-N(u)|^{s_{1}}dt.
\]

~

Since $N(u)\in1+\pi^{s}O_{E}$ and $1+\pi^{s}\rho\notin N(F^{*})$
we have that $|N(u)-(1+\pi^{s}\rho)|=q^{-2\cdot s\cdot s_{1}}$ :

~

\[
I_{1}=B\underset{1+\pi^{s}O_{F}}{\int}\chi^{*}(1+\pi^{s}\rho-N(u))du.
\]

~

Substitute $\nu=N(u)$ :

\[
I_{1}=C\underset{1+\pi^{2s}O_{E}}{\int}\chi^{*}((1+\pi^{s}\rho)-\nu)d\nu.
\]

~

Substitute $y=1+\pi^{s}\rho-\nu$ :

\[
I_{1}=D\underset{\pi^{s}\rho+\pi^{2s}O_{E}}{\int}\chi^{*}(y)dy.
\]

Substitute $t=\pi^{-s}\rho^{-1}y$:

\[
I_{1}=E\underset{1+\pi^{s}O_{E}}{\int}\chi^{*}(y)dy=0.
\]

\begin{itemize}
\item We Show that $I_{2}=0$, since the absolute value in the integral
is constant, we have that:
\end{itemize}
\[
I_{2}=\underset{\varpi O}{\int}\chi^{*}(\pi^{\frac{s}{2}}N(y)+Tr(y)-\pi^{\frac{s}{2}}\rho)|\pi^{\frac{s}{2}}N(y)+Tr(y)-\pi^{\frac{s}{2}}\rho|^{s_{1}}dy=
\]

\[
A_{2}\underset{\varpi O_{F}}{\int}\chi^{*}(\pi^{\frac{s}{2}}N(y)+Tr(y)-\pi^{\frac{s}{2}}\rho)dy
\]

We use the identity:

\[
\pi^{\frac{s}{2}}N(y)+Tr(y)=(\varpi^{-\frac{s}{2}}+\varpi^{\frac{s}{2}}y)(\overline{\varpi^{-\frac{s}{2}}}+\overline{\varpi^{\frac{s}{2}}}\overline{y})-\pi^{-\frac{s}{2}}=N(\varpi^{-\frac{s}{2}}+\varpi^{\frac{s}{2}}y)-\pi^{-\frac{s}{2}}.
\]

Setting it into the integral we get:

~

\[
\underset{\varpi O_{F}}{\int}\chi^{*}(\pi^{\frac{s}{2}}N(y)+Tr(y)-\pi^{\frac{s}{2}}\rho)dy=\underset{\varpi O_{F}}{\int}\chi^{*}(N(\varpi^{-\frac{s}{2}}+\varpi^{\frac{s}{2}}y)-\pi^{-\frac{s}{2}}-\pi^{\frac{s}{2}}\rho)dy=
\]

\[
\underset{\varpi O_{F}}{\int}\chi^{*}(N(1+\varpi^{s}y)-(1+\pi^{s}\rho))dy
\]

We substitute $t=1+\varpi^{s}y$ to get:

\[
I_{2}=A_{3}\underset{1+\varpi^{s+1}O_{F}}{\int}\chi^{*}(N(t)-(1+\pi^{s}\rho))dx
\]

We substitute $u=N(x)$ and use Lemma \ref{pushforword lemma} :

\begin{equation}
I_{2}=A_{4}\underset{1+\pi^{s+1}O_{E}}{\int}\chi^{*}(u-(1+\pi^{s}\rho)du
\end{equation}

We substitute $\nu=u-(1+\pi^{s}\rho)$

\[
I_{2}=A_{5}\underset{-\pi^{s}\rho+\pi^{s+1}O_{E}}{\int}\chi^{*}(\nu)d\nu.
\]

We substitute $-\pi^{s}\rho\cdot\zeta=\nu$ to get:

\[
I_{2}=A_{6}\underset{1+\pi O_{E}}{\int}\chi^{*}(\zeta)d\zeta=0.
\]

Since $1+\pi O_{E}$ contains a non-norm, $\chi^{*}$ is a non-trivial
character and the integral vanish.

~

\subsubsection{RU representative:}

$x\sim\left(\begin{array}{cc}
\pi^{\frac{s+1}{2}} & \varpi\\
\overline{\varpi} & \pi^{\frac{s+1}{2}}\rho
\end{array}\right)$ 

~

Showing that $L(x,\chi^{*},s_{1},s_{2})=0$ is similar to the RP equivalent
representative in Subsection \ref{sub:RP-representative:}

\subsection{Calculating $L(\chi^{*},\chi_{2},x,z)$ on the diagonal representatives.}

~

$x\sim\left(\begin{array}{cc}
\pi^{\lambda_{1}}\epsilon_{1}\\
 & \pi^{\lambda_{2}}\epsilon_{2}
\end{array}\right)$

~

By Lemma \ref{formula lemma}:

\[
L(x,\chi^{*},\chi_{2},s_{1},s_{2})=\frac{q^{\frac{\lambda_{2}-\lambda_{1}}{2}}q^{\lambda_{1}z_{2}+\lambda_{2}z_{1}}\chi_{2}(\epsilon_{1}\epsilon_{2})\chi^{*}(\epsilon_{2})}{q^{-1}+1}[I_{1}+I_{2}].
\]

Where:

\[
\begin{array}{c}
I_{1}=\underset{x\in O_{F}}{\int}\chi^{*}(\pi^{\lambda_{1}-\lambda_{2}}\frac{\epsilon_{1}}{\epsilon_{2}}+N(x))|\pi^{\lambda_{1}-\lambda_{2}}\frac{\epsilon_{1}}{\epsilon_{2}}+N(x)|^{s_{1}}dx\\
I_{2}=\underset{y\in\varpi O_{F}}{\int}\chi^{*}(1+\frac{\epsilon_{1}}{\epsilon_{2}}\pi^{\lambda_{1}-\lambda_{2}}N(y))|1+\frac{\epsilon_{1}}{\epsilon_{2}}\pi^{\lambda_{1}-\lambda_{2}}N(y)|^{s_{1}}dy.
\end{array}
\]

~

~

\subsubsection{\label{sub:The-representatives-with lambda1-lambda2<s}The representatives
{\small with} $\lambda_{1}-\lambda_{2}<s$}

~

~

Note that 
\[
I_{2}=C\underset{y\in\varpi O_{F}}{\int}\chi^{*}(1+\frac{\epsilon_{1}}{\epsilon_{2}}\pi^{\lambda_{1}-\lambda_{2}}N(y))dy=D\underset{y\in O_{F}}{\int}\chi^{*}(1+\frac{\epsilon_{1}}{\epsilon_{2}}\pi^{\lambda_{1}-\lambda_{2}+1}N(y))dy
\]

By Lemma  \ref{charchter lemma 1} this integral vanishes.

~

Showing $I_{1}=0:$

~

~

~

~

\[
I_{1}=\underset{t\in O_{F}}{\int}\chi^{*}(\pi^{\lambda_{1}}\epsilon_{1}+\pi^{\lambda_{2}}\epsilon_{2}N(t))|\pi^{\lambda_{1}}\epsilon_{1}+\pi^{\lambda_{2}}\epsilon_{2}N(t)|^{s_{1}}dt=
\]

\[
q^{-2\lambda_{2}s_{1}}\underset{x\in O_{F}}{\int}\chi^{*}(\pi^{\lambda_{1}-\lambda_{2}}\frac{\epsilon_{1}}{\epsilon_{2}}+N(t))|\pi^{\lambda_{1}-\lambda_{2}}\frac{\epsilon_{1}}{\epsilon_{2}}+N(t)|^{s_{1}}dt
\]

~

~

This is most complicated and tricky, we prove it with careful steps:

~

\textbf{\large Step 1}: 

We Show that If $0\leq m<s$ 
\[
I_{1}=A\underset{x\in O_{F}}{\int}\chi^{*}(\pi^{m}\frac{\epsilon_{1}}{\epsilon_{2}}+N(t))|\pi^{m}\frac{\epsilon_{1}}{\epsilon_{2}}+N(t)|^{s_{1}}dx=0
\]

~

Decompose the integral to $\underset{i}{\cup}\varpi^{i}O_{F}^{*}$
:

\[
I_{1}=A\underset{i}{\sum}\underset{\varpi^{i}O_{F}^{*}}{\int}\chi^{*}(\pi^{m}\frac{\epsilon_{1}}{\epsilon_{2}}+N(t))|\pi^{m}\frac{\epsilon_{1}}{\epsilon_{2}}+N(t)|^{s_{1}}dt
\]

For $i\leq m$ the absolute value is constant, so on each term. We
substitute $t=u^{-1}$ to get:

\[
\underset{O_{F}^{*}}{\int}\chi^{*}(\pi^{m-i}\frac{\epsilon_{1}}{\epsilon_{2}}+N(t))dt=B\underset{O_{F}^{*}}{\int}\chi^{*}(1+\pi^{m-i}\frac{\epsilon_{1}}{\epsilon_{2}}N(u))du
\]

By Lemma  \ref{charchter lemma 2} this integral vanishes.

~

~

So we have: 
\[
I_{1}=A\underset{i\geq m}{\sum}\underset{\varpi^{i}O_{F}^{*}}{\int}\chi^{*}(\pi^{m}\frac{\epsilon_{1}}{\epsilon_{2}}+N(t))|\pi^{m}\frac{\epsilon_{1}}{\epsilon_{2}}+N(t)|^{s_{1}}dt
\]

\[
=\underset{\varpi^{m}O_{F}}{\int}\chi^{*}(\pi^{m}\frac{\epsilon_{1}}{\epsilon_{2}}+N(t))|\pi^{m}\frac{\epsilon_{1}}{\epsilon_{2}}+N(t)|^{s_{1}}dt
\]

~

On the space $\varpi^{m+1}O_{F}$ the absolute value is constant again,
Substitute $u=\varpi^{-m-1}t$:

~

\[
\underset{\varpi^{m+1}O_{F}}{\int}\chi^{*}(\pi^{m}\frac{\epsilon_{1}}{\epsilon_{2}}+N(t))dt=C\underset{O_{F}}{\int}\chi^{*}(\frac{\epsilon_{1}}{\epsilon_{2}}+\pi N(u))du=0
\]
 By Lemma \ref{charchter lemma 1}

~

So we conclude that: 
\[
I_{1}=A\underset{\varpi^{m}O_{F}^{*}}{\int}\chi^{*}(\pi^{m}\frac{\epsilon_{1}}{\epsilon_{2}}+N(t))|\pi^{m}\frac{\epsilon_{1}}{\epsilon_{2}}+N(t)|^{s_{1}}dt
\]
.

~

Substitute $u=\varpi^{-m}t$ to get:

~

\begin{equation}
I_{1}=C\underset{x\in O_{F}^{*}}{\int}\chi^{*}(\frac{\epsilon_{1}}{\epsilon_{2}}+N(t))|\frac{\epsilon_{1}}{\epsilon_{2}}+N(t)|^{s_{1}}dt\label{eq:8.3}
\end{equation}

~

Observe that the integrand is not a locally constant function.

~

~

\textbf{\large Step 2: }{\large \par}

~

We compute the integral in \eqref{eq:8.3} on cosets of the group:
$\frac{O_{F}^{*}}{1+\varpi O_{F}}$

~

\[
\underset{O_{F}^{*}}{\int}\chi^{*}(\frac{\epsilon_{1}}{\epsilon_{2}}+N(t))|\frac{\epsilon_{1}}{\epsilon_{2}}+N(t)|^{s_{1}}dt=\underset{i}{\Sigma}\underset{a_{i}(1+\varpi O_{F})}{\int}\chi^{*}(\frac{\epsilon_{1}}{\epsilon_{2}}+N(t))|\frac{\epsilon_{1}}{\epsilon_{2}}+N(t)|^{s_{1}}dt.
\]

We substitute on each term $u=a_{i}^{-1}t$ . 

Note that the integrand is invariant to the substitution, it follows
that:

~

\[
I_{1}=D\underset{1+\varpi O_{F}}{\int}\chi^{*}(\frac{\epsilon_{1}}{\epsilon_{2}}+N(t))|\frac{\epsilon_{1}}{\epsilon_{2}}+N(t)|^{s_{1}}dt
\]

~

Let $\zeta\in1+\varpi O_{F}$. we prove: 
\[
J=\underset{1+\varpi O_{F}}{\int}\chi^{*}(\zeta+N(t))|\zeta+N(t)|^{s_{1}}dx=0.
\]

~

We decompose the integration on spaces such that the absolute value
would be constant.

~

We have that (up to a measure zero subset): 

\[
N(1+\varpi O_{F})\subseteq1+\pi O_{E}=\underset{j\geq1}{\cup}(1+\pi^{j}O_{E}^{*}).
\]

Observe that:

\[
\underset{j\geq1}{\cup}(1+\pi^{j}O_{E}^{*})=\underset{j\geq1}{\cup}(-\zeta+\pi^{j}O_{E}^{*}).
\]

~

Now we calculate $J$ on the spaces $N^{-1}(-\zeta+\pi^{j}O_{E}^{*})$:

~
\[
J=\underset{j=1}{\overset{\infty}{\sum}}\underset{N^{-1}(1+\pi^{j}O_{E}^{*})}{\int}\chi^{*}(\zeta+N(t))|\zeta+N(t|^{s_{1}}dt
\]

If $0<j\leq s$ , the inverse image $N^{-1}(-\zeta+\pi^{j}O_{E}^{*})$
is a disjoint union of coset $\underset{k}{\cup}a_{k,j}(1+\varpi^{s+1}O_{F})$
.

For $s<j$, $N^{-1}(-\zeta+\pi^{j}O_{E}^{*})$ can be represented
as union of $b_{k,j}(1+\varpi^{j+1}O_{F})$ 

We compute the integral on each coset:

~

\[
J=\underset{j=1}{\overset{s}{\sum}}\underset{N^{-1}(1+\varpi O_{F})}{\int}\chi^{*}(\zeta+N(t))|\zeta+N(t)|^{s_{1}}dt+
\]

\[
\underset{j>s}{\overset{}{\sum}}\underset{N^{-1}(-\zeta+\pi^{j}O_{E}^{*})}{\int}\chi^{*}(\zeta+N(t))|\zeta+N(t)|^{s_{1}}dt=
\]

\[
\underset{j=1}{\overset{s}{\sum}}\underset{k}{\sum}\underset{a_{k,j}(1+\varpi^{s+1}O_{F})}{\int}\chi^{*}(\zeta+N(t))|\zeta+N(t)|^{s_{1}}dt+
\]

\[
\underset{j>s}{\overset{}{\sum}}\underset{k}{\sum}\underset{b_{k,j}(1+\varpi^{j+1}O_{F})}{\int}\chi^{*}(\zeta+N(t))|\zeta+N(t)|^{s_{1}}dt.
\]

~

~

Since for every $t\in a_{k,j}(1+\varpi^{s+1}O_{F})$ we have that
$|\zeta+N(t)|_{E}=q^{-j}$ , after the substitution: $\nu=\zeta+N(t)$
, each one of the domains of integration $a_{k,j}(1+\pi^{s+1}O_{F})$
can be represented :

\[
\pi^{j}\eta_{i,j}+\pi^{s+1}O_{E},\,\,\,\eta_{i,j}\in O_{F}^{*}
\]

~

And so we have that:

\[
\underset{a_{k,j}(1+\varpi^{s+1}O_{F})}{\int}\chi^{*}(\zeta+N(t))|\zeta+N(t)|^{s_{1}}dt=D_{i,j}\underset{\pi^{j}\eta_{i,j}+\pi^{s+1}O_{E}}{\int}\chi^{*}(\nu)|\nu|^{s_{1}}d\nu.
\]

~

Substitute $u=\pi^{-j}\eta_{i,j}^{-1}\nu$ to get:

~

\[
\underset{\pi^{j}\eta_{i,j}+\pi^{s+1}O_{E}}{\int}\chi^{*}(\nu)|\nu|^{s_{1}}d\nu=E_{i,j}\underset{x\in1+\pi^{s+1-j}O_{E}}{\int}\chi^{*}(t)dt=0
\]
.

Because $\chi^{*}$ is a non trivial character on $1+\pi^{s+1-j}O_{E}$.

~

A similar trick will show that the integral $\underset{b_{k,j}(1+\varpi^{j+1}O_{F})}{\int}\chi^{*}(\zeta+N(t))|\zeta+N(t)|^{s_{1}}dt=0.$ 

~

We deduce that: $J=0\Longrightarrow I_{1}=0$

~

In conclusion $I_{1}=I_{2}=0\Longrightarrow L(x,\chi^{*},\chi_{2},z)=0$.

~

~

\subsubsection{The representative with $\lambda_{1}-\lambda_{2}=s${\large :}}

Suppose $\lambda_{1}-\lambda_{2}=s$

~

In this case the 
\[
I_{2}=\underset{y\in\varpi O_{F}}{\int}\chi^{*}(1+\frac{\epsilon_{1}}{\epsilon_{2}}\pi^{\lambda_{1}-\lambda_{2}}N(y))|1+\frac{\epsilon_{1}}{\epsilon_{2}}\pi^{\lambda_{1}-\lambda_{2}}N(y)|^{s_{1}}dy\neq0
\]
but we have: 
\[
I_{1}+I_{2}=I_{1}-\underset{O_{F}^{*}}{\int}\chi^{*}(\pi^{\lambda_{1}-\lambda_{2}}\frac{\epsilon_{1}}{\epsilon_{2}}+N(t))dt+\underset{O_{F}^{*}}{\int}\chi^{*}(\pi^{\lambda_{1}-\lambda_{2}}\frac{\epsilon_{1}}{\epsilon_{2}}+N(t))dt+I_{2}
\]

~

Note that by substituting $t\mapsto t^{-1}$, we have:: 
\[
\underset{O_{F}^{*}}{\int}\chi^{*}(\pi^{\lambda_{1}-\lambda_{2}}\frac{\epsilon_{1}}{\epsilon_{2}}+N(t))dt=\underset{O_{F}^{*}}{\int}\chi^{*}(\pi^{\lambda_{1}-\lambda_{2}}\frac{\epsilon_{1}}{\epsilon_{2}}N(t)+1)dt.
\]

~

By Lemma \ref{charchter lemma 1}, the sum of the integrals vanish:

\[
\underset{O_{F}^{*}}{\int}\chi^{*}(\pi^{\lambda_{1}-\lambda_{2}}\frac{\epsilon_{1}}{\epsilon_{2}}N(t)+1)dt+I_{2}=\underset{O_{F}}{\int}\chi^{*}(1+\pi^{\lambda_{1}-\lambda_{2}}\frac{\epsilon_{1}}{\epsilon_{2}}N(t))dt=0
\]

~

Hence: 

\[
I_{1}+I_{2}=I_{1}-\underset{O_{F}^{*}}{\int}\chi^{*}(\pi^{\lambda_{1}-\lambda_{2}}\frac{\epsilon_{1}}{\epsilon_{2}}+N(t))dt.
\]

But:

\[
I_{1}-\underset{O_{F}^{*}}{\int}\chi^{*}(\pi^{\lambda_{1}-\lambda_{2}}\frac{\epsilon_{1}}{\epsilon_{2}}+N(t))dt=C\underset{\varpi O_{F}}{\int}|\pi^{\lambda_{1}-\lambda_{2}}\frac{\epsilon_{1}}{\epsilon_{2}}+N(t)|\chi^{*}(\pi^{\lambda_{1}-\lambda_{2}}\frac{\epsilon_{1}}{\epsilon_{2}}+N(t))dt=
\]

\[
D\underset{O_{F}}{\int}|\pi^{\lambda_{1}-\lambda_{2}-1}\frac{\epsilon_{1}}{\epsilon_{2}}+N(t)|\chi^{*}(\pi^{\lambda_{1}-\lambda_{2}-1}\frac{\epsilon_{1}}{\epsilon_{2}}+N(t))dt
\]
The last integral vanish by Subsection \ref{sub:The-representatives-with lambda1-lambda2<s}
($\lambda_{1}-\lambda_{2}<s$).

~

~

\subsection{The diagonal representatives with $\lambda_{1}-\lambda_{2}>s$}

~

This is the only case where the spherical function do not vanish and
$\chi_{1}=\chi^{*}$:

~

\[
L(x,\chi^{*},\chi_{2},s_{1},s_{2})=\frac{q^{\frac{\lambda_{2}-\lambda_{1}}{2}}q^{\lambda_{1}z_{2}+\lambda_{2}z_{1}}\chi_{2}(\epsilon_{1}\epsilon_{2})\chi^{*}(\epsilon_{2})}{q^{-1}+1}[I_{1}+I_{2}]
\]

~

\[
I_{1}=\underset{O_{F}}{\int}\chi^{*}(\pi^{\lambda_{1}-\lambda_{2}}\frac{\epsilon_{1}}{\epsilon_{2}}+N(t))|\pi^{\lambda_{1}-\lambda_{2}}\frac{\epsilon_{1}}{\epsilon_{2}}+N(t)|^{s_{1}}dt
\]

~

\[
I_{2}=\underset{y\in\varpi O_{F}}{\int}\chi^{*}(1+\frac{\epsilon_{1}}{\epsilon_{2}}\pi^{\lambda_{1}-\lambda_{2}}N(y))|1+\frac{\epsilon_{1}}{\epsilon_{2}}\pi^{\lambda_{1}-\lambda_{2}}N(y)|^{s_{1}}dy
\]

~

~

Since $1+\frac{\epsilon_{1}}{\epsilon_{2}}\pi^{\lambda_{1}-\lambda_{2}}N(y)$
is a norm if $y\in\varpi O_{F}$ it could be shown easily that $I_{2}=q^{-1}.$

~

We integrate $I_{1}$ on the spaces $\underset{i\geq0}{\cup}\varpi^{i}O_{F}^{*}$
:

~
\begin{itemize}
\item If $\lambda_{1}-\lambda_{2}-s>i$ and $t\in\varpi^{i}O_{F}^{*}$ then
$\pi^{\lambda_{1}-\lambda_{2}}\frac{\epsilon_{1}}{\epsilon_{2}}+N(t)$
is a norm and 
\end{itemize}
~

\[
\underset{\varpi^{i}O_{F}^{*}}{\int}\chi^{*}(\pi^{\lambda_{1}-\lambda_{2}}\frac{\epsilon_{1}}{\epsilon_{2}}+N(t))|\pi^{\lambda_{1}-\lambda_{2}}\frac{\epsilon_{1}}{\epsilon_{2}}+N(t)|^{s_{1}}dx=q^{-2is_{1}-i}(1-q^{-1})
\]
.

~

~

So 
\[
I_{1}=\underset{j=0}{\overset{\lambda_{1}-\lambda_{2}-s-1}{\sum}}(1-q^{-1})q^{-2js_{1}-j}+\underset{i\geq\lambda_{1}-\lambda_{2}-s}{\sum}\underset{\varpi^{i}O_{F}^{*}}{\int}\chi^{*}(\pi^{\lambda_{1}-\lambda_{2}}\frac{\epsilon_{1}}{\epsilon_{2}}+N(t))|\pi^{\lambda_{1}-\lambda_{2}}\frac{\epsilon_{1}}{\epsilon_{2}}+N(t)|^{s_{1}}dt
\]

~

~
\begin{itemize}
\item For $i=\lambda_{1}-\lambda_{2}-s$, we have that if $t\in\varpi^{i}O_{F}^{*}$
then $|\pi^{\lambda_{1}-\lambda_{2}}\frac{\epsilon_{1}}{\epsilon_{2}}+N(t)|^{s_{1}}=q^{-2i\cdot s_{1}}$
:
\end{itemize}
~

~

\[
\underset{x\in\varpi^{i}O_{F}^{*}}{\int}\chi^{*}(\pi^{\lambda_{1}-\lambda_{2}}\frac{\epsilon_{1}}{\epsilon_{2}}+N(t))|\pi^{\lambda_{1}-\lambda_{2}}\frac{\epsilon_{1}}{\epsilon_{2}}+N(t)|^{s_{1}}dt=q^{-2i\cdot s_{1}}\underset{x\in\varpi^{i}O_{F}^{*}}{\int}\chi^{*}(\pi^{\lambda_{1}-\lambda_{2}}\frac{\epsilon_{1}}{\epsilon_{2}}+N(t))dt
\]

~

Substituting $u=\varpi^{i}t^{-1}$ , we get:

~

\begin{equation}
q^{-2i\cdot s_{1}}\underset{x\in\varpi^{i}O_{F}^{*}}{\int}\chi^{*}(\pi^{\lambda_{1}-\lambda_{2}}\frac{\epsilon_{1}}{\epsilon_{2}}+N(t))dt=q^{-2i\cdot s_{1}-i}\underset{O_{F}^{*}}{\int}\chi^{*}(1+\pi^{s}\frac{\epsilon_{1}}{\epsilon_{2}}N(u))du.\label{eq:8.1-2}
\end{equation}

~

By Lemma \ref{charchter lemma 1}:

~

\begin{equation}
\underset{O_{F}}{\int}\chi^{*}(1+\pi^{s}\frac{\epsilon_{1}}{\epsilon_{2}}N(t))dt=0\Rightarrow\underset{O_{F}^{*}}{\int}\chi^{*}(1+\pi^{s}\frac{\epsilon_{1}}{\epsilon_{2}}N(t))dt=-\underset{\varpi O}{\int}\chi^{*}(1+\pi^{s}\frac{\epsilon_{1}}{\epsilon_{2}}N(t))dt=-q^{-1}\label{eq:8.5}
\end{equation}

~

Substituting \eqref{eq:8.5}to Eq \ref{eq:8.1-2} , we get that for
$i=\lambda_{1}-\lambda_{2}-s$:

\[
\underset{\varpi^{i}O_{F}^{*}}{\int}\chi^{*}(\pi^{\lambda_{1}-\lambda_{2}}\frac{\epsilon_{1}}{\epsilon_{2}}+N(t))|\pi^{\lambda_{1}-\lambda_{2}}\frac{\epsilon_{1}}{\epsilon_{2}}+N(t)|^{s_{1}}dt=-q^{-2i\cdot s_{1}-i-1}.
\]

~

So : 

\[
I_{1}=\underset{j=0}{\overset{\lambda_{1}-\lambda_{2}-s-1}{\sum}}(1-q^{-1})q^{-2js_{1}-j}-q^{-2(\lambda_{1}-\lambda_{2}-s)\cdot s_{1}-(\lambda_{1}-\lambda_{2}-s)-1}+
\]

\[
\underset{\varpi^{\lambda_{1}-\lambda_{2}-s+1}O_{F}}{\int}\chi^{*}(\pi^{\lambda_{1}-\lambda_{2}}\frac{\epsilon_{1}}{\epsilon_{2}}+N(t))|\pi^{\lambda_{1}-\lambda_{2}}\frac{\epsilon_{1}}{\epsilon_{2}}+N(t)|^{s_{1}}dt
\]

~

~

Note that: 

\[
\underset{\varpi^{\lambda_{1}-\lambda_{2}-s+1}O_{F}}{\int}\chi^{*}(\pi^{\lambda_{1}-\lambda_{2}}\frac{\epsilon_{1}}{\epsilon_{2}}+N(t))|\pi^{\lambda_{1}-\lambda_{2}}\frac{\epsilon_{1}}{\epsilon_{2}}+N(t)|^{s_{1}}dt=
\]

\[
C_{1}\underset{O_{F}}{\int}\chi^{*}(\frac{\epsilon_{1}}{\epsilon_{2}}+\pi^{-s+1}N(t))|\frac{\epsilon_{1}}{\epsilon_{2}}+\pi^{-s+1}N(t))|^{s_{1}}dt=
\]

\[
C_{2}\underset{O_{F}}{\int}\chi^{*}(\pi^{s-1}\frac{\epsilon_{1}}{\epsilon_{2}}+N(t))|\frac{\epsilon_{1}}{\epsilon_{2}}\pi^{s-1}+N(t))|^{s_{1}}dt
\]

We apply the case of $\lambda_{1}-\lambda_{2}<s$ to get that:

\[
C_{2}\underset{O_{F}}{\int}\chi^{*}(\pi^{s-1}\frac{\epsilon_{1}}{\epsilon_{2}}+N(t))|\frac{\epsilon_{1}}{\epsilon_{2}}\pi^{s-1}+N(t))|^{s_{1}}dt=0
\]

~

~

Altogether:

~

\[
I_{1}=\underset{O_{F}}{\int}\chi^{*}(\pi^{\lambda_{1}-\lambda_{2}}\frac{\epsilon_{1}}{\epsilon_{2}}+N(t))|\pi^{\lambda_{1}-\lambda_{2}}\frac{\epsilon_{1}}{\epsilon_{2}}+N(t)|^{s_{1}}dt=
\]

\[
\underset{j=0}{\overset{\lambda_{1}-\lambda_{2}-s-1}{\sum}}(1-q^{-1})q^{-2js_{1}-j}]-q^{-2(\lambda_{1}-\lambda_{2}-s)\cdot s_{1}-(\lambda_{1}-\lambda_{2}-s)-1}=
\]

\[
[(1-q^{-1})\frac{1-q^{(\lambda_{1}-\lambda_{2}-s)\cdot(2z_{1}-2z_{2})}}{1-q^{2z_{1}-2z_{2}}}-q^{i(2z_{1}-2z_{2}+2)-1}].
\]

~

Recall that:
\[
L(x,\chi^{*},\chi_{2},s_{1},s_{2})=\frac{q^{\frac{\lambda_{2}-\lambda_{1}}{2}}q^{\lambda_{1}z_{2}+\lambda_{2}z_{1}}\chi_{2}(\epsilon_{1}\epsilon_{2})\chi^{*}(\epsilon_{2})}{q^{-1}+1}I_{1}
\]

~

After simplifying:

~

\[
L(x,\chi^{*},\chi_{2},s_{1},s_{2})=\frac{q^{\frac{\lambda_{2}-\lambda_{1}}{2}}q^{\lambda_{1}z_{2}+\lambda_{2}z_{1}}\chi_{2}(\epsilon_{1}\epsilon_{2})\chi^{*}(\epsilon_{2})}{q^{-1}+1}\times
\]

\[
[q^{-1}+(1-q^{-1})\frac{q^{2z_{2}}-q^{(\lambda_{1}-\lambda_{2}-s)\cdot(2z_{1}-2z_{2})+2z_{2}}}{q^{2z_{2}}-q^{2z_{1}}}-q^{(\lambda_{1}-\lambda_{2}-s)(2z_{1}-2z_{2})-1}]
\]
~

After further simplification:

~

\selectlanguage{american}%
\[
L(x,\chi^{*},\chi_{2},s_{1},s_{2})=\frac{q^{\frac{\lambda_{2}-\lambda_{1}}{2}}q^{\lambda_{1}z_{2}+\lambda_{2}z_{1}}\chi_{2}(\epsilon_{1}\epsilon_{2})\chi^{*}(\epsilon_{2})}{q^{-1}+1}\times
\]

~

\[
[\frac{q^{2z_{1}-1}}{q^{2z_{1}}-q^{2z_{2}}}+\frac{-q^{2z_{2}}+q^{(\lambda_{1}-\lambda_{2}-s)\cdot(2z_{1}-2z_{2})+2z_{2}}}{q^{2z_{1}}-q^{2z_{2}}}+\frac{-q^{(\lambda_{1}-\lambda_{2}-s)(2z_{1}-2z_{2})-1+2z_{1}}}{q^{2z_{1}}-q^{2z_{2}}}]
\]

~

\[
=\frac{q^{\frac{\lambda_{2}-\lambda_{1}}{2}}\chi_{2}(\epsilon_{1}\epsilon_{2})\chi^{*}(\epsilon_{2})q^{2sz_{2}}(q^{2z_{2}}-q^{2z_{1}-1})}{q^{-1}+1}\times\frac{(q^{\lambda_{1}z_{1}+\lambda_{2}z_{2}-2sz_{1}}-q^{\lambda_{1}z_{2}+\lambda_{2}z_{1}-2sz_{2}})}{q^{2z_{1}}-q^{2z_{2}}}
\]

We represent the function in a more symmetric form:

\[
L(x,\chi^{*},\chi_{2},s_{1},s_{2})=\frac{q^{\frac{\lambda_{2}-\lambda_{1}}{2}}\chi^{*}(\epsilon_{2})\chi_{2}(\epsilon_{1}\epsilon_{2})}{1+q^{-1}}q^{2sz_{2}}(q^{2z_{2}}-q^{2z_{1}-1})\times\underset{\sigma\in\Sigma_{2}}{\sum}\sigma(\frac{q^{2\langle(\lambda_{1}-s,\lambda_{2}),z\rangle}}{q^{2z_{1}}-q^{2z_{2}}}).
\]

~

\selectlanguage{english}%
~


\begin{thebibliography}{References}
\bibitem[CS]{key-6}W. Casselman: The unramified principal series
of p-adic groups $\mathrm{I}$ . The spherical function, Compositio.
Math. \textbf{40} (1980), 387-406.

\bibitem[FV]{key-9}I.B. Fesenko, S.V. Vostokov, Local Fields and
Their Extensions, Second Revised Edition, Amer. Math. Soc., Providence,
R.I. 2002, 346 pp. http://www.maths.nott.ac.uk/personal/ibf/book/book.html

\bibitem[H1]{key-1}Y. Hironaka: Spherical functions of Hermitian
and symmetric forms $\mathrm{I}$, Japan. J. Math. \textbf{14} (1988),
203-223.

\bibitem[H2]{key-2}Y. Hironaka: Spherical functions of Hermitian
and symmetric forms $\mathrm{II}$, Japan. J. Math. \textbf{15} (1989),
15-51.

\bibitem[H3]{key-3}Y. Hironaka: Spherical functions of Hermitian
and symmetric forms $\mathrm{III}$, Tohoku Math.J.\textbf{ 40} (1988),
651-671.

\bibitem[H4]{key-4}Y. Hironaka: Spherical functions and local densities
of hermitian forms, J. Math. Soc. Japan. \textbf{51} (1999).

\bibitem[H5]{key-5}Y. Hironaka: Spherical functions of hermitian
and symmetric forms over 2-adic fields,, Comment. Math. Univ. St.
Pauli, \textbf{39} (1990), 157-193.

\bibitem[HS]{key-8}Y. Hironaka and F. Sato, Local Densities of Representations
of Quadratic Forms over p-Adic Integers (The Non-Dyadic Case). J.
Number Theory \textbf{83} (2000), no. 1, 106-136.

\bibitem[J]{key-10}R. Jacobowitz, \textquotedbl{}Hermitian forms
over local fields,\textquotedbl{} American Journal of Mathematics,
vol. \textbf{84} (1962), p. 441-465.

\bibitem[M]{key-11} I.G MacDonald, {}``Symmetric Functions and Hall
Polynomials'', Claredon, Oxford (1979) ,298-299.

\bibitem[O]{key-7}O. Offen : Relative spherical functions on p-adic
symmetric spaces (three cases). Pacific J. Math. \textbf{215} no.
1 (2004) , 97-149.\end{thebibliography}
\end{document}